 \numberwithin{equation}{section}
\def\bB{{\mathbb{B}}}
\def\R{{\mathbb{R}}}
\def\bZ{{\mathbb{Z}}}
\def\bN{{\mathbb{N}}}
\def\cA{{\mathscr{A}}}
\def\cB{{\mathscr{B}}}
\def\cC{{\mathscr{C}}}
\def\cD{{\mathscr{D}}}
\def\cG{{\mathscr{G}}}
\def\cH{{\mathscr{H}}}
\def\cI{{\mathscr{I}}}
\def\cN{{\mathscr{N}}}
\def\one{\mathds{1}}
\def\ve{\varepsilon}
\def\vp{\varphi}
\def\lec{\lesssim}
\def\gec{\gtrsim}
\DeclareMathOperator{\diam}{diam}
\def\dist{\mathop\mathrm{dist}} 						
\def\supp{\mathop\mathrm{supp}}					
\newcommand{\ps}[1]{\left( #1 \right)}
\def\XXint#1#2#3{{\setbox0=\hbox{$#1{#2#3}{\int}$ }
\vcenter{\hbox{$#2#3$ }}\kern-.58\wd0}}
\theoremstyle{plain}
\newtheorem{theorem}{Theorem}
\newtheorem{corollary}[theorem]{Corollary}
\newtheorem{lemma}[theorem]{Lemma}
\newtheorem{proposition}[theorem]{Proposition}
\theoremstyle{definition}
\newtheorem{definition}[theorem]{Definition}
\newtheorem{remark}[theorem]{Remark}
\numberwithin{equation}{section}
\numberwithin{theorem}{section}
  \DeclareFontFamily{U}{mathb}{\hyphenchar\font45} 
\DeclareFontShape{U}{mathb}{m}{n}{
      <5> <6> <7> <8> <9> <10> gen * mathb
      <10.95> mathb10 <12> <14.4> <17.28> <20.74> <24.88> mathb12
      }{}
\DeclareSymbolFont{mathb}{U}{mathb}{m}{n}
\DeclareMathSymbol{\toitself}{3}{mathb}{"FD}  
\def\@tocline#1#2#3#4#5#6#7{\relax
  \ifnum #1>\c@tocdepth 
  \else
    \par \addpenalty\@secpenalty\addvspace{#2}%
    \begingroup \hyphenpenalty\@M
    \@ifempty{#4}{%
      \@tempdima\csname r@tocindent\number#1\endcsname\relax
    }{%
      \@tempdima#4\relax
    }%
    \parindent\z@ \leftskip#3\relax \advance\leftskip\@tempdima\relax
    \rightskip\@pnumwidth plus4em \parfillskip-\@pnumwidth
    #5\leavevmode\hskip-\@tempdima
      \ifcase #1
       \or\or \hskip 1em \or \hskip 2em \else \hskip 3em \fi%
      #6\nobreak\relax
    \dotfill\hbox to\@pnumwidth{\@tocpagenum{#7}}\par
    \nobreak
    \endgroup
  \fi}
\begin{document}

\title[Weak lower density]{The weak lower density condition and uniform rectifiability}

\subjclass[2010]{28A75,28A78,28A12}
\keywords{Uniform rectifiability, Uniform measures}

\author[Azzam]{Jonas Azzam}

\address{Jonas Azzam\\
School of Mathematics \\ University of Edinburgh \\ JCMB, Kings Buildings \\
Mayfield Road, Edinburgh,
EH9 3JZ, Scotland.}
\email{j.azzam "at" ed.ac.uk}

\author[Hyde]{Matthew Hyde}

\address{Matthew Hyde\\
School of Mathematics \\ University of Edinburgh \\ JCMB, Kings Buildings \\
Mayfield Road, Edinburgh,
EH9 3JZ, Scotland.}
\email{m.hyde "at" ed.ac.uk}

\begin{abstract}
We show that an Ahlfors $d$-regular set $E$ in $\R^{n}$ is uniformly rectifiable if the set of pairs $(x,r)\in E\times (0,\infty)$ for which there exists $y \in B(x,r)$ and $0<t<r$ satisfying $\cH^{d}_{\infty}(E\cap B(y,t))<(2t)^{d}-\ve(2r)^d$ is a Carleson set for every $\ve>0$.

To prove this, we generalize a result of Schul by proving, if $X$ is a $C$-doubling metric space, $\ve,\rho\in (0,1)$, $A>1$, and $X_{n}$ is a sequence of maximal $2^{-n}$-separated sets in $X$, and $\cB=\{B(x,2^{-n}):x\in X_{n},n\in \bN\}$, then 
\[
\sum \left\{r_{B}^{s}: B\in \cB, \frac{\cH^{s}_{\rho r_{B}}(X\cap AB)}{(2r_{B})^{s}}>1+\ve\right\} \lec_{C,A,\ve,\rho,s} \cH^{s}(X).
\]
This is a quantitative version of the classical result that for a metric space $X$ of finite $s$-dimensional Hausdorff measure,  the upper $s$-dimensional densities are at most $1$ $\cH^{s}$-almost everywhere.

\end{abstract}

\maketitle

\tableofcontents

\section{Introduction}
A classical fact from geometric measure theory is that, if the lower densities of a set of finite $\cH^{d}$-measure are close enough to $1$, then the set is $d$-rectifiable. Recall that a metric space $X$ is {\it $d$-rectifiable} if it may be covered up to a set of zero $d$-dimensional Hausdorff measure  (denoted $\cH^{d}$) by Lipschitz images of subsets of $\mathbb{R}^{d}$. We define the lower and upper $d$-dimensional densities of a set $E$ at a point $x$ to be 
\[
\Theta^{d}_{*}(E,x):= \liminf_{r\rightarrow 0} \frac{\cH^{d}(E\cap B(x,r))}{(2r)^d}\]
and 
\[ \Theta^{d,*}(E,x):= \limsup_{r\rightarrow 0} \frac{\cH^{d}(E\cap B(x,r))}{(2r)^d}.
\]
The $d=1$ case is the Besicovitch $\frac{3}{4}$-Theorem \cite{Bes38}, which states that if $E\subseteq \mathbb{R}^{2}$ is a set of finite $1$-dimensional Hausdorff measure such that 
\begin{equation}
\label{e:3/4}
\Theta^{1}_{*}(E,x)>\frac{3}{4} \mbox{ for $\cH^{1}$-a.e. $x\in E$},
\end{equation}
then $E$ is $1$-rectifiable (and it is conjectured that $\frac{3}{4}$ can be replaced by $\frac{1}{2}$, see \cite{PT92,Far00,Far02} for some partial progress). The case for $d>1$ is due to Preiss \cite{Pr87} (which generalized earlier works of Mattila \cite{Mat75} and Marstrand \cite{Mar61}): there is a constant $\alpha(n,d)\in(0,1)$ such that for any $E\subseteq \mathbb{R}^{n}$ of locally finite $\cH^{d}$-measure, $E$ is $d$-rectifiable if
\begin{equation}
\label{e:preiss}
0< \alpha(n,d)\Theta^{d,*}(E,x)<\Theta^{d}_{*}(E,x)\mbox{ for $\cH^{d}$-a.e. $x\in E$}.
\end{equation}
In other words, rectifiability follows if the density of Hausdorff measure in a ball becomes roughly stable as the ball shrinks to a point at almost every point. This result requires information about the upper densities as well, but it gives a kind of generalization of Besicovitch's theorem using the following result \cite[2.10.19(5)]{Federer}: for any metric space $X$ of locally finite $d$-dimensional measure, 
\begin{equation}
\label{e:federer}
\Theta^{d,*}(X,\cdot)\leq 1 \;\; \mbox{ $\cH^{d}$-almost everywhere in $X$}
\end{equation} 
and in fact, this holds for spherical Hausdorff measure. In particular, this coupled with Preiss' result shows that the rectifiability of $E$ follows if 
\begin{equation}
\label{e:lowerdensity-preiss}
\alpha(n,d)< \Theta^{d}_{*}(E,x)\mbox{ for $\cH^{d}$-a.e. $x\in E$}.
\end{equation}
In fact, the same inequality is needed for Besicovitch's proof as well. 

The objective of our paper is to develop an analogue of these lower density criteria that guarantee a stronger rectifiable structure, in particular uniform rectifiability. A set $E\subseteq \R^{n}$ is said to be {\it $d$-uniformly rectifiable (UR)} if
\begin{enumerate}
\item it is {\it $C_{0}$-Ahlfors $d$-regular} for some $C_{0}>0$, meaning
\[
C_{0}^{-1}r^{d}\leq \cH^{d}(B(x,r)\cap E)\leq C_{0}r^{d} \mbox{ for all }x\in E, \;\; 0<r<\diam E,
\]
\item $E$ has {\it big pieces of Lipschitz images of $\R^{d}$ (BPLI)}, meaning there are $L,c>0$ so that for all $x\in E$ and $0<r<\diam E$, there is $f:\R^{d}\rightarrow \R^{n}$ $L$-Lipschitz so that $\cH^{d}(f(B(0,r))\cap B(x,r)\cap E)\geq cr^{d}$. 
\end{enumerate}

These sets were introduced by David and Semmes in \cite{DS}, the initial motivation being to characterize when certain singular integral operators were bounded on subsets of Euclidean space (see \cite{DS} for more discussion on this context). This began a program of trying to find various equivalent criteria for uniform rectifiability. We review a few such criteria here. Let $\cD^{E}$ denote the Christ-David cubes for $E$ (see Section \ref{s:proof of main} below). For each cube $Q\in \cD^E$, there is a ball $B_{Q}$ centered on and containing $Q$ of comparable size. Given  two closed sets $E$ and $F$, and $B$ a set, we denote
\[
d_{B}(E,F)=\frac{2}{\diam B}\max\left\{\sup_{y\in E\cap B}\dist(y,F), \sup_{y\in F\cap  B}\dist(y,E)\right\}.\]

For $C_{0}>0$, $\ve>0$, and $R\in \cD^{E}$, let
\begin{multline*}
{\rm BLWG}(C_0,\ve,R)=\sum\{(\diam Q)^{d}: Q\in \cD^{E},\;\; Q \subseteq R, \mbox{ and }   \\
 d_{C_0B_{Q}}(E,P)\geq \ve\mbox{ for all $d$-planes }P\}. 
\end{multline*}
We say $E$ satisfies the {\it bilateral weak geometric lemma} (BWGL) if for all $C_{0}>0$, $\ve>0$, and $R\in \cD^{E}$, we have ${\rm BLWG}(C_0,\ve,R)\lec \cH^{d}(R)$. Another condition is the {\it BAUP} ({\it bilateral approximation by unions of planes}) condition. We define ${\rm BAUP}(C_0,\ve,R)$ in a similar way to ${\rm BLWG}(C_0,\ve,R)$ except now we measure the distance between $E$ and any union of $d$-planes, not just one. 

It is shown that these two conditions are equivalent to UR (see \cite{of-and-on}). Part of the motivation for finding such criteria is that, depending on the kind of problem you are working on, it may be more natural to prove UR using one criterion over another. For example, the BWGL condition was crucial for showing uniform domains with UR boundaries are chord-arc domains \cite{AHMNT17}, and the BAUP condition was crucial to the solution of the David-Semmes Conjecture in codimension 1 (see \cite{NTV14}) and for studying the Dirichlet problem in domains with Ahlfors regular boundaries (see \cite{HLMN17}). 

Another motivation, which is also the motivation of this paper, is to revisit classical results from geometric measure theory and determine whether one can develop quantitative analogues. 

There is already a UR analogue of Preiss' result which was introduced in \cite{of-and-on}: for an Ahlfors $d$-regular set $E$ and $\ve>0$, let $\cA_{E}(c_1,\ve)$ be the set of pairs $(x,r)\in E\times (0,\diam E)$ for which there is a $c_1$-Ahlfors $d$-regular measure $\sigma_{x,r}$ with $\supp \sigma_{x,r}=E$ and 
\[
|\sigma_{x,r}(B(y,t))-t^{d}|<\ve r^{d} \mbox{ for all }y\in E\cap B(x,r).
\]
Let $\cB_{E}(c_{1},\ve)=E\times (0,\diam E)\backslash \cA_{E}(c_{1},\ve)$. We say satisfies the {\it weak constant density (WCD)} condition if there is $c_1$ so that $\cB_{E}(c_{1},\ve)$ is a Carleson set for every $\ve>0$, with norm depending on $\ve$. Recall, a set $A \subseteq E \times (0,\diam E)$ is a {\it Carleson set} if $\one_A d\cH^d(x)\frac{dr}{r}$ is a Carleson measure on $E \times (0,\diam E)$, and a measure $\mu$ is a {\it Carleson measure} on $E\times (0,\diam E)$  if there is $C>0$ so that $\mu(B(x,r)\times (0,r))\leq r^{d}$ for all $x\in E$ and $0<r$. This is certainly satisfied if the set of $(x,r)$ for which 
\[
|\cH^{d}|_{E}(B(y,t))-(2t)^{d}|<\ve r^{d} \mbox{ for all }y\in E\cap B(x,r) \mbox{ and }0<t<r.
\]
is a Carleson set for each $\ve>0$, which is a stronger condition than \eqref{e:preiss} in that the latter is implied by the former. David and Semmes first showed the WCD was satisfied by every UR set, and that it implied UR when $d=1,2,$ or $n-1$, and the general case was established by Tolsa \cite{Tol15}. 

There are similar results to these that characterize UR in terms of the fluctuations of the density of Hausdorff measure between scales rather than how far it is from $1$. For example, in \cite{CGLT16}, the authors show that an Ahlfors regular set $E$ is UR if $\Delta_{\mu}^{d}(x,r)^{2}\frac{dr}{r}d\cH^{d}(x)$ is a Carleson measure on $E\times (0,\diam E)$ where $\mu = \cH^{d}|_{E}$ and
\[
\Delta_{\mu}^{d}(x,r) = \left|\frac{\mu(B(x,r))}{(2r)^{d}}- \frac{\mu(B(x,r/2))}{r^d}\right|.
\]
In fact, their results hold more generally for Ahlfors regular {\it measures} $\mu$ and not just Hausdorff measure. See also \cite{TT15, Tol17} where this quantity is also used to characterize rectifiable sets and measures. \\

Our main result establishes a lower density criterion for uniform rectifiability using Hausdorff content $\cH^{d}_{\infty}$ rather than Hausdorff measure:

\begin{theorem}\label{t:WLD}
Let $E \subseteq \R^n$ be a $C_0$-Ahlfors $d$-regular set. For $\ve >0,$ let $\cB_\emph{WLD}(\ve)$ be the set of $(x,r) \in E \times (0,\infty)$ for which there exists $y \in E \cap B(x,r)$ and $0 < t <  r$ such that
\begin{equation}
\label{e:WLD}
\cH_\infty^d(E \cap B(y,t)) < (2t)^d-\ve (2r)^d
\end{equation}
If $E$ satisfies the \textit{weak lower density condition} \emph{(WLD)}, meaning $\cB_\emph{WLD}(\ve)$ is a Carleson set for each $\ve >0,$ then $E$ is UR. 
\end{theorem}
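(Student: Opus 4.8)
The plan is to verify the bilateral weak geometric lemma (BWGL), which by \cite{of-and-on} is equivalent to uniform rectifiability. Fix $C_1>0$, $\ve_0>0$, and $R\in\cD^{E}$; we must show that the collection of \emph{BWGL-bad} cubes $Q\subseteq R$, those with $d_{C_1B_Q}(E,P)\ge\ve_0$ for every $d$-plane $P$, is a Carleson set. The idea is to sandwich the $d$-dimensional Hausdorff content of $E$ near its ``flat'' value and deduce that a BWGL-bad cube must lie in one of two Carleson families. Choose a small $\delta=\delta(\ve_0,C_0,n,d)>0$ and a large $\lambda=\lambda(C_1)$. The first family consists of the cubes $Q$ for which the content is \emph{too small} somewhere in $\lambda B_Q$, namely $(x_Q,\lambda r_Q)\in\cB_{\mathrm{WLD}}(\delta)$; this is a Carleson set directly from the hypothesis (the reindexing $Q\mapsto(x_Q,\lambda r_Q)$ is harmless after shrinking $\delta$ slightly and using that WLD holds for every $\ve$). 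The second consists of the cubes $Q$ for which the content is \emph{too large} at some scale $t\in[\delta^{1/d}r_Q,\lambda r_Q]$ inside $\lambda B_Q$; here our quantitative analogue of Federer's almost-everywhere density theorem (the estimate displayed in the abstract, applied to maximal $2^{-n}$-nets of $E$ in $C_0B_R$) bounds, Carleson-wise, the net-balls carrying excess content, and since each such net-ball of radius $s$ is seen by only boundedly many cubes at each of the $\lec\log(1/\delta)$ scales between $s$ and $\delta^{-1/d}s$, this second collection is also Carleson (with norm depending on $\delta$, hence on $\ve_0$, which is permitted). So it suffices to prove the geometric claim: if $Q$ avoids both families, then $d_{C_1B_Q}(E,P)<\ve_0$ for some $d$-plane $P$.

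For such a $Q$, unwinding the two non-membership conditions and using Ahlfors regularity gives the two-sided content bound $(1-\eta)(2t)^d\le\cH^d_\infty(E\cap B(y,t))\le(1+\eta)(2t)^d$ for all $y\in E\cap C_1B_Q$ and all $t$ with $\delta^{1/d}r_Q\le t\le C_1 r_Q$, where $\eta=\eta(\delta)\to0$ as $\delta\to0$; that is, the content of $E$ agrees with the value $(2t)^d$ attained by a $d$-plane, throughout $C_1B_Q$ and at every scale down to $\delta^{1/d}r_Q$. I would then prove flatness by compactness. If the claim failed there would exist $C_0$-regular sets $E_k\ni x_k$, radii $r_k$, and $\delta_k\downarrow0$ satisfying this content bound on $B(x_k,\lambda r_k)$ but with $d_{B(x_k,r_k)}(E_k,P)\ge\ve_0$ for every $d$-plane $P$. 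Rescaling so that $x_k=0$ and $r_k=1$ and passing to a subsequence, $E_k$ converges locally in the Hausdorff metric to a closed set $F\ni0$; since Hausdorff content is upper semicontinuous under Hausdorff convergence, the lower content bound survives, giving $\cH^d_\infty(F\cap B(y,t))\ge(2t)^d$ for all $y\in F$, $t>0$. The key remaining point is to show that the upper (Federer) side of the sandwich constrains $F$ as well --- so that $\cH^d_\infty(F\cap B(y,t))=(2t)^d$ for all $y\in F$, $t>0$ --- and then to establish the rigidity statement that a closed set $F$ whose every ball has exactly the flat content must be an affine $d$-plane. Heuristically, the equality saturates the isodiametric-type lower bound for Hausdorff content at every scale, forcing the orthogonal projection of $F\cap B(y,t)$ onto a suitable $d$-plane $P_{y,t}$ to cover a $d$-disk of radius $t$ with multiplicity essentially one, which traps $F$ in a thin tube around $P_{y,t}$ near $B(y,t)$; as this holds at all scales and centers, the $P_{y,t}$ cannot rotate or translate, so $F$ is a single plane. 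Since $E_k\to F$ then forces $d_{B(0,1)}(E_k,P)\to0$, we reach a contradiction; combined with the decomposition above, this proves the BWGL and hence that $E$ is UR.

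I expect the rigidity statement, together with the semicontinuity bookkeeping in the blow-up, to be the main obstacle: Hausdorff content is only upper semicontinuous and Hausdorff measure is neither upper nor lower semicontinuous under Hausdorff convergence, so one must track carefully which of the two content inequalities survives the limit and in which direction --- in particular the ``too large'' side has to be exploited before passing to the limit, or kept in a robust form (say via the scale-$\rho t$ content appearing in the quantitative Federer estimate) that does survive. Quantifying ``$F$ is close to a $d$-plane at every scale and location $\Rightarrow$ $F$ is a $d$-plane'' so as to feed back a genuine $\ve_0$-bound is where most of the remaining technical work will sit. If isolating a single plane turns out to be awkward, one can instead target the weaker BAUP condition --- approximation by finite unions of $d$-planes --- which by \cite{of-and-on} is also equivalent to UR and should follow from the same content sandwich with less effort.
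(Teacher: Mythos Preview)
Your decomposition into two Carleson families---``content too small'' handled by WLD and ``content too large'' handled by Theorem~\ref{t:main}---matches the paper exactly, as does the idea of a compactness argument on the complement. The gap is in the endgame.

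Your two-sided sandwich $(1-\eta)(2t)^d\le\cH^d_\infty(E\cap B(y,t))\le(1+\eta)(2t)^d$ is vacuous on the right: $\cH^d_\infty$ of any subset of $B(y,t)$ is at most $(2t)^d$, so your ``too large'' family is doing no work in the blow-up. Consequently the limiting condition you extract, $\cH^d_\infty(F\cap B(y,t))=(2t)^d$ for all $y\in F$ and $t>0$, is too weak to force $F$ to be a plane. A union of two parallel $d$-planes already satisfies it: for $d=1$ in $\R^2$, take $F=\R\times\{0,1\}$; then $F\cap B(y,t)$ contains a segment of length $2t$ and is contained in a ball of diameter $2t$, so its $\cH^1_\infty$ is exactly $2t$ for every $y\in F$ and every $t>0$. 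Thus your rigidity claim is false as stated, and your isodiametric/projection heuristic cannot be completed. The BAUP fallback does not obviously rescue this either, since you would still need a classification of all closed sets with exact content in every ball, and nothing in your outline supplies one.

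What the paper does differently is to retain the \emph{scale-$\rho$} content $\cH^d_{\rho r_Q}$ for the upper bound (this is where Theorem~\ref{t:main} is genuinely used), and then pass to a limit not of the sets $E_k$ but of the \emph{non-additive} set functions $\mu_k=\cH^d_{\rho_k}|_{E_k}$ with $\rho_k\to0$. The substantial technical point (the entire appendix) is that along a subsequence these converge weakly to a bona fide Radon measure $\mu$; the two content bounds then pin down $\mu(B)=(2r_B)^d$ for every ball centred on $\supp\mu$, so $\mu$ is $d$-uniform. At that stage one invokes Tolsa's result that approximation by supports of uniform measures implies UR. In short, the missing idea is to convert the content sandwich into a statement about a \emph{measure} in the limit (via weak convergence of the $\rho$-contents) and then appeal to the existing structure theory of uniform measures, rather than attempt a new rigidity theorem for Hausdorff content.
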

In other words, if we have nice estimates on how often the density of Hausdorff {\it content} dips below $1$, then we can guarantee UR. We explain later why we require Hausdorff content rather than Hausdorff measure. Notice also that if $(x,r)\not\in \cB_{\rm WLD}(\ve)$, this means \eqref{e:WLD} fails for all balls $B(y,t)$ with $y\in E\cap B(x,r)$ and $0<t<r$, but this doesn't say the density of Hausdorff content is not much smaller than $1$ in {\it all} balls , since \eqref{e:WLD} fails trivially for all $t<\epsilon^\frac{1}{d}r$. Hence, $(x,r)\not\in \cB_{\rm WLD}(\ve)$ only gives information about the densities of Hausdorff content in balls that aren't too much smaller than $r$. 

Note that while we do rely on work from \cite{Tol15} in our proof, there is still much work to do. For one, our condition is in terms of Hausdorff {\it content} and not Hausdorff {\it measure}, but more importantly our condition does not require information about how often the density of Hausdorff content dips {\it above} 1, whereas the conditions of David, Semmes, and Tolsa ask that the density of Hausdorff measure is not too much above or below $1$ in most balls. 

The converse to Theorem \ref{t:WLD} is not true. If such a result did hold, we could find a Carleson condition on $\mathscr{B}_{\text{WLD}}(\ve)$ for any UR set, with norm depending only on $\ve$ and the constants appearing in the definition of UR. We show this is not possible by constructing a sequence of UR sets $E_n$ with uniform UR constants, so that the associated sequence of Carleson norms blows up. 

For $n \in \bN,$ let $E_n \subseteq [0,1] \subseteq \R$ be the set consisting every other dyadic interval of length $2^{-n}$ contained in $[0,1],$ that is, 
\[E_n = \bigcup_{k=1}^{2^{n-1}} [(2k-1)2^{-n},(2k)2^{-n}]. \]  
It is not difficult to show that $E_n$ is UR with constants $C_0 = 4, \ L = 1$ and $c = \tfrac{1}{4}$ for each $n \in \bN.$ Let $\ve < \tfrac{1}{3}.$ Since each ball $B$ (where balls in this setting are actually intervals) centered on $E_n$ with radius larger than $2^{-n}$ satisfies
\[ \cH^1_\infty(E_n \cap B) \leq \frac{2}{3}(2r_B) < (1-\ve)(2r_B), \]
it follows that $E_n \times (2^{-n} , 1) \subseteq \cB_{\text{WLD}}(\ve).$ Then for any $x \in E_n$ and $2^{-n} < R< 1,$
\begin{align*}
\int_{2^{-n}}^R \int_{B(x,R)} \one_{\cB_\text{WLD}(\ve)}(y,r) \, d\cH^1|_{E_n}(y) \frac{dr}{r} &\geq \log( R2^n)\cH^1(E_n \cap B(x,R)) \\
&\gtrsim \log(R2^n)R
\end{align*}
and the right-hand side goes to $\infty$ as $n \rightarrow \infty$.\\

One reason we only need to control how often the density of Hausdorff content dips below $1$ is the following result that may be of independent interest, which says that actually content doesn't jump above $1$ too much anyway, and this holds for quite general sets, not just Ahlfors regular sets. One can view this as a quantitative version of \eqref{e:federer}. 

\begin{theorem}
\label{t:main}
Let $X$ be a compact $C$-doubling metric space, meaning that every ball $B$ in $X$ can be covered by at most $C$ many balls of half the radius. Let $\ve,\rho\in (0,1)$, $A>0$, and $X_{n}$ be a sequence of maximal $2^{-n}$-separated sets in $X$, and $\cB=\{B(x,2^{-n}):x\in X_{n},n\in \bN\}$, then 
\[
\sum \left\{r_{B}^{s}: B\in \cB, \frac{\cH^{s}_{\rho r_{B}}(X\cap AB)}{(2r_{B})^{s}}>1+\ve\right\} \lec_{C,A} \frac{\log \frac{1}{\min\{\rho,\ve/s\}}}{\ve} \cH^{s}(X).
\]
\end{theorem}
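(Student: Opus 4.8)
The plan is to run a stopping-time/telescoping argument against the finite quantity $\cH^s(X)$, exploiting that the Hausdorff content $\cH^s_\delta$ is monotone (non-increasing) in the threshold $\delta$. First I would discard the trivial case $\cH^s(X)=\infty$, observe that enlarging $A$ only enlarges the family on the left so that it suffices to treat $A\ge 1$, and note that the $O_{C}(1)$ generations with $2^{-n}\gtrsim\diam X$ contribute at most $\lec_{s}\cH^s(X)$ and may be ignored. I would then pass from the net balls to a Christ--David cube structure $\cD=\bigsqcup_n\cD_n$ for $X$, set up so that each $Q\in\cD_n$ carries a point $z_Q\in X_n$ with $B(z_Q,c_0 2^{-n})\subseteq Q\subseteq B(z_Q,2^{-n})$; up to constants depending on $C$ and harmless changes of $\ve,\rho$ the left side equals $\sum\{\ell(Q)^s:Q\in\cD\text{ bad}\}$, where $\ell(Q)=2^{-n}$, $B_Q=B(z_Q,\ell(Q))$, and $Q$ is \emph{bad} when $\cH^s_{\rho\ell(Q)}(X\cap AB_Q)>(1+\ve)(2\ell(Q))^s$. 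Working with cubes gives an honest tree, which the stopping time needs.

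The mechanism is a comparison of content across scales. At the coarse scale $\ell(Q)$ the set $X\cap AB_Q$ is covered by $\lec_{C,A}1$ sets of diameter $\le\ell(Q)$, so $\cH^s_{\ell(Q)}(X\cap AB_Q)\lec_{C,A}\ell(Q)^s$; at the fine scale, badness forces $\cH^s_{\rho\ell(Q)}(X\cap AB_Q)>(1+\ve)(2\ell(Q))^s$. Since $\delta\mapsto\cH^s_\delta(X\cap AB_Q)$ is non-increasing and the scale $\tau\ell(Q)$, $\tau=\min\{\rho,\ve/s\}$, lies only $\sim\log(1/\tau)$ dyadic scales below $\ell(Q)$, the content of $X\cap AB_Q$ ``gains'' at least $\gtrsim_{C,A}\ve\,\ell(Q)^s$ over these scales; a pigeonhole over the $\sim\log(1/\tau)$ intermediate dyadic scales then produces, for each bad $Q\in\cD_n$, a ``productive'' scale $m=m(Q)$ with $n\le m\lec n+\log(1/\tau)$ and
\[
\cH^s_{2^{-m-1}}(X\cap AB_Q)-\cH^s_{2^{-m}}(X\cap AB_Q)\;\gtrsim_{C,A}\;\frac{\ve}{\log\frac1\tau}\,\ell(Q)^s .
\]
I would then charge each bad cube to this increment. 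For fixed $m$ the dilated balls $\{AB_Q:Q\in\cD_m\}$ have overlap $\lec_{C,A}1$, and $\sum_m\big(\cH^s_{2^{-m-1}}(X)-\cH^s_{2^{-m}}(X)\big)$ telescopes to at most $\cH^s(X)$; combining, the sum of the displayed increments over all bad cubes is $\lec_{C,A}\cH^s(X)$, which rearranges to the claimed bound.

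Because Hausdorff content is an infimum over covers rather than a measure, it is only subadditive, so the ``charge and sum with bounded overlap'' step cannot be carried out by simply adding the local increments, and making it rigorous is the heart of the proof. I would do this by invoking Frostman's lemma in the compact doubling space $X$: at each productive scale $m$ replace $\cH^s_{2^{-m-1}}(X\cap AB_Q)$ by an auxiliary measure $\nu_Q$ on $X\cap AB_Q$ with $\nu_Q(B(z,t))\le t^s$ for $t\le 2^{-m-1}$ and $\nu_Q(X\cap AB_Q)\gtrsim\cH^s_{2^{-m-1}}(X\cap AB_Q)$, and argue with the part of $\nu_Q$ supported on the structure ``newly resolved'' between scales $2^{-m}$ and $2^{-m-1}$, which can be summed against $\cH^s(X)$ using the per-scale bounded overlap. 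The step I expect to be the main obstacle is exactly this: a scale-by-scale estimate gives $\sum_{Q\in\cD_n\text{ bad}}\ell(Q)^s\lec_{C,A}\cH^s(X)$ for each single $n$ but diverges when summed over $n$, so the entire content of the theorem is that the hypothesis at the \emph{fine} scale $\rho\ell(Q)$ (rather than at $\ell(Q)$) makes badness self-limiting; certifying that the content the inequality ``spends'' on distinct bad cubes is essentially disjoint, with multiplicity bounded by $C$ and $A$ alone and independent of the number of scales, is where the $\log(1/\rho)$ (the scale gap in the hypothesis) and the $1/\ve$ (a Chebyshev loss in passing from a multiplicative excess to the additive content increment it certifies) are both incurred.
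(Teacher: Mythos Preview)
Your telescoping/pigeonhole scheme has a genuine gap at the ``productive scale'' step. You assert that since $\cH^s_{\ell(Q)}(X\cap AB_Q)\lec_{C,A}\ell(Q)^s$ while $\cH^s_{\rho\ell(Q)}(X\cap AB_Q)>(1+\ve)(2\ell(Q))^s$, the content must gain $\gtrsim_{C,A}\ve\,\ell(Q)^s$ across the intermediate scales. But the coarse estimate is only an \emph{upper} bound $C'(C,A)\,\ell(Q)^s$, and for $A>1$ (which you explicitly allow) this $C'$ is typically much larger than $2^s(1+\ve)$: already the single-ball cover gives $\cH^s_{2A\ell(Q)}(X\cap AB_Q)\le(2A\ell(Q))^s$, and there is no reason for the finer content at scale $\ell(Q)$ to drop below $(1+\ve)(2\ell(Q))^s$. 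So $\cH^s_{\rho\ell(Q)}(X\cap AB_Q)-\cH^s_{\ell(Q)}(X\cap AB_Q)$ need not be positive, the pigeonhole never fires, and there is nothing to charge. Your Frostman workaround does not repair this: the measures $\nu_Q$ depend on $Q$, and ``the part of $\nu_Q$ supported on structure newly resolved between scales'' has no definition that would let you dominate a sum of local increments by the single global telescoping sum $\sum_m\bigl(\cH^s_{2^{-m-1}}(X)-\cH^s_{2^{-m}}(X)\bigr)$.

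The paper proceeds quite differently, via Schul's martingale weights rather than a content increment. It replaces Christ--David cubes by Schul cubes built so that consecutive generations differ by a factor $M=2^J$ with $J\sim\log\bigl(1/\min\{\rho,\ve/s\}\bigr)$; this forces every cube properly contained in $Q$ to have diameter below $\rho\,\diam Q$. The key is then a \emph{packing} inequality, not an increment: if $Q$ is bad, its maximal bad descendants $R$ together with the remainder $R_Q$ form an admissible cover of $X\cap Q$ at scale $\rho\,\diam Q$, whence
\[
\mu(R_Q)+\sum_{R}(\diam R)^s\;\ge\;\cH^s_{\rho\,\diam Q}(X\cap Q)\;\ge\;(1+\ve/4)(\diam Q)^s .
\]
This multiplicative gain at every bad stopping level is encoded by a weight $w_Q$ with $\int w_Q\,d\mu=(\diam Q)^s$ and $w_Q(x)\le(1+\ve/4)^{-k_Q(x)}$, where $k_Q(x)$ counts the nested bad cubes through $x$; summing $\int w_Q$ over bad $Q$ and bounding the geometric series gives $\sum_{Q\text{ bad}}(\diam Q)^s\lec\mu(X)/\ve$. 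The factor $\log(1/\min\{\rho,\ve/s\})$ enters only at the very end, from summing over the $O_{C,A}(J)$ thinned subfamilies needed to set up the Schul cubes, not from a pigeonhole over scales. The idea your sketch is missing is precisely this parent-to-children packing comparison, which is what makes badness self-limiting along nested chains.
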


Recall that a metric space is $C$-{\it doubling} if any ball $B\subseteq X$ may be covered by at most $C$-many balls of half its radius.

The proof of this theorem is mostly an adaptation of the geometric martingale techniques in Schul's proof of the Analyst's Traveling Salesman Theorem in Hilbert space \cite{Sch07-TST}. In that paper, Schul needs to control the sum of diameters of balls centered along a curve $\Gamma$ of finite length for which the portion of $\Gamma$ in these balls consists of more than one approximately straight curve segments. In such balls, $\Gamma$ will have large Hausdorff content, and it is really that property that he is using implicitly in his proof, so his method can be extrapolated to sets other than curves, or even sets of non-integer dimension.\\

The referee asked some very interesting questions that we were not able to answer but are venues for future work. 

First, in light of the example $E_n$ we constructed, the referee asked us whether the assumptions in Theorem \ref{t:WLD} were so strong that they may imply a stronger property than being UR: having big pieces of Lipschitz graphs. 

We say a set $E$ has {\it big pieces of Lipschitz graphs} (or BPLG) if there are $L,c>0$ so that for all $x\in E$ and $0<r<\diam E$, there is a $d$-dimensional $L$-Lipschitz graph $\Gamma\subseteq \R^{n}$ (that is, a rotated copy of the graph of an $L$-Lipschitz function $f:\R^{d}\rightarrow \R^{n-d}$) so that $\cH^{d}(\Gamma\cap  B(x,r)\cap E)\geq cr^{d}$. The distinction between BPBI and BPLG might seem arbitrary, but BPLG is quite crucial in some applications, see for example \cite{DJ90}. 

It is part of the lore in the theory of UR that this property implies but is not equivalent to UR due to an unpublished example of Hrycak\footnote{The first author learned this from John Garnett who had emailed Steve Hofmann who had emailed Stephen Semmes who learned it from Hrycak.}. The example is actually a special case of the classical Venetian blinds construction (see \cite{Falconer,Fal86}), but it was Hrycak's idea to use it to show BPBI $\not\Rightarrow$ BPLG. A start towards asking the referee's question would be to see if this example satisfies the WLD condition. The authors believe they can show  (not reported here) that it satisfies a weaker WLD type condition--the same condition but with Hausdorff measure instead of Hausdorff content--but the proof relies on the additivity of Hausdorff measure and it is not clear whether it can be extended to the case of Hausdorff content. 

The second question the referee asked was whether we could generalize the result to sets that are not Ahlfors regular. The definition of UR is no longer appropriate in this setting, but there are ways of generalizing results from UR to more general settings: In \cite{AV19}, the first author and Villa generalized many results from UR to {\it lower content regular sets}, which are sets where we assume $\cH^{d}_{\infty}(E \cap B(x,r))\gec r^{d}$ for all $x\in E$ and $0<r<\diam E$. It turns out in this setting that the geometric sums ${\rm BWGL}$ and ${\rm BAUP}$ are still meaningful in this context in the sense that the following estimate holds inside any cube $R$:
\begin{equation}
\label{e:bwglbaup}
\cH^{d}(R)+{\rm BWGL}(C_0,\ve,R)\sim \cH^{d}(R)+{\rm BAUP}(C_0,\ve,R)
\end{equation}
and in fact these are comparable to other sums like ${\rm BWGL}$ that appear in the theory of UR, see \cite{AV19} for more details. 

The most natural way to define a quantity ${\rm WLD}(R)$, like ${\rm BWGL}(C_0,\ve,R),$ is to let it equal the sum of $(\diam Q)^{d}$ for cubes $Q$ in $R$ for which \eqref{e:WLD} holds for $CB_Q$ for some $C>0$. The referee's question is whether $\cH^{d}(R) + {\rm WLD}(R)$ has any relation to $\cH^{d}(R)+{\rm BWGL}(C_0,\ve,R)$. The earlier example we constructed shows the two are not comparable: if $R=E_n$, then $\cH^{d}(R)+{\rm BWGL}(C_0,\ve,R)\sim 1$ whereas $\cH^{d}(R) + {\rm WLD}(R)\rightarrow\infty$ as $n\rightarrow\infty$. It could be that we still have $\cH^{d}(R)+{\rm BLWG}(C_0,\ve,R)\lec \cH^{d}(R) + {\rm WLD}(R)$. Some of our arguments take us part of the way, however our work below takes advantage of the fact that Ahlfors regular sets form a compact family in the sense that if we have a sequence of such sets containing the origin, then we can pass to a subsequence so that they converge to another Ahlfors regular set, and in particular we take advantage of Hausdorff measure being locally finite on this set, whereas a sequence of $d$-lower regular sets of locally finite $d$-measure may not converge to a set of locally finite $d$-measure.

\subsection*{Acknowledgments} We would like to thank the anonymous referees for spotting several mistakes and for their many useful comments and suggestions that greatly improved the paper. The second author was supported by The Maxwell Institute Graduate School in Analysis and its Applications, a Centre for Doctoral Training funded by the UK Engineering and Physical Sciences Research Council (grant EP/L016508/01), the Scottish Funding Council, Heriot-Watt University and the University of Edinburgh.

\section{Notation}

We will write $a\lesssim b$ if there is $C>0$ such that $a\leq Cb$ and $a\lesssim_{t} b$ if the constant $C$ depends on the parameter $t$. We also write $a\sim b$ to mean $a\lesssim b\lesssim a$ and define $a\sim_{t}b$ similarly.

Let $X$ be a metric space. We will denote the distance between two points $x,y\in X$ by $|x-y|$. For sets $A,B\subset X$, let 
\[\dist(A,B)=\inf\{|x-y| \; |\; x\in A,y\in B\}, \;\; \dist(x,A)=\dist(\{x\},A),\]
and 
\[\diam A=\sup\{|x-y|\;| \; x,y\in A\}.\]

For $x\in X$ and $r>0$, we will let $B(x,r)$ be the closed ball centered at $x$ of radius $r$. If $B=B(x,r)$ and $\lambda >0$, we will let $\lambda B= B(x,\lambda r)$. For a closed ball $B$, we let $B^\circ$ be the open ball with the same centre and radius as $B.$  

We recall the definition of Hausdorff measures and contents, but more information can be found in \cite{Mattila}: for $A\subseteq X$, $s\geq 0$, and $\delta>0$, we define
\[
\cH^{s}_{\delta}(A)=\inf\left\{\sum (\diam A_i)^{s}: A\subseteq\bigcup A_{i},\;\; \diam A_{i}\leq \delta\right\}. 
\]
The $s$-dimensional Hausdorff content is defined to be $\cH^{s}_{\infty}(A)$, and $s$-dimensional Hausdorff measure is defined to be the limit
\[
\cH^{s}(A)=\lim_{\delta\rightarrow 0} \cH^{s}_{\delta}(A).
\]
Notice that $\cH^{s}_{\delta}(A)$ is decreasing in $\delta$, that is,

\begin{equation}
\label{e:Hdecreasing}
\cH^{s}_{\delta}(A)\leq \cH^{s}_{\delta'}(A) \leq \cH^{s}(A) \;\; \mbox{ for }\delta'\leq \delta.
\end{equation}

\section{Weak convergence of measures}\label{s:WCM}

In this section we consider the weak convergence of a sequence of measures of the form $\mu_k = \cH^d_{\rho_k}|_{E_k},$ where $\rho_k \rightarrow 0.$ In what follows, unless stated otherwise, a measure will simply refer to a monotonic, countably subadditive set function which vanishes for the empty set. In particular, we do not require a measure to be additive. The results of this section will be used in the proof of Theorem \ref{t:WLD}, we delay their proofs until the appendix. 

For a measure $\mu$ and a function $f: \R^n \rightarrow [0,\infty)$, define the Choquet integral of $f$ with respect to $\mu$ by the formula
\[
\int f \, d\mu = \int_0^\infty \mu( \{x \in \R^n : f(x) > t \}) \, dt.
\]
For a real valued function $f: \R^n \rightarrow \R,$ let $f^+ = \max\{f,0\}$ and $f^{-} = \max \{-f,0 \}.$ Define the Choquet integral of $f$ with respect to $\mu$ by 
\[
\int f \, d\mu = \int f^+ \, d\mu - \int f^{-} \, d\mu. 
\]

\begin{definition}
	Let $\{\mu_k\}$ be a sequence of measures on $\R^n.$ We say the sequence $\{\mu_k\}$ converges \textit{weakly} to a Radon measure $\mu$, and write 
	\[
	\mu_k \rightharpoonup \mu,
	\]
	if
	\[
	\lim_{k \rightarrow \infty} \int \vp \, d\mu_k = \int \vp \, d\mu \quad \text{for all} \ \vp \in C_0(\R^n). 
	\]
	Here, $C_0(\R^n)$ is the space of continuous functions of compact support. 
\end{definition}

We state some general results about the weak convergence of measures. The results are essentially those found in Chapter 1 of \cite{Mattila} and Chapter III.5 of \cite{of-and-on}.

\begin{lemma}\label{l:semicontm}
	Suppose $\{\mu_k \}$ is a sequence of measures converging weakly to a Radon measure $\mu.$ For $K \subseteq \R^n$ compact and $U \subseteq \R^n$ open we have 
	\[
	\mu(K) \geq \limsup_{k \rightarrow \infty} \mu_k(K)
	\]
	and
	\[
	\mu(U) \leq \liminf_{k \rightarrow \infty} \mu_k(U). 
	\]
\end{lemma}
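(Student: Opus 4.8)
This is the standard portmanteau-type statement for weak convergence of measures, and the plan is to mimic the classical proof for Radon measures (as in Chapter 1 of \cite{Mattila}), taking care that the $\mu_k$ are only monotone and countably subadditive, not additive. The only properties of the $\mu_k$ actually needed are monotonicity and the defining convergence $\int\vp\,d\mu_k\to\int\vp\,d\mu$ for $\vp\in C_0(\R^n)$, together with the fact that Choquet integration is monotone in the integrand (if $0\le f\le g$ then $\int f\,d\mu_k\le\int g\,d\mu_k$, which is immediate from the layer-cake formula and monotonicity of $\mu_k$) and that $\int \one_U\,d\mu = \mu(U)$ in the Choquet sense.

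\textbf{The compact case.} First I would prove $\mu(K)\ge\limsup_k\mu_k(K)$. Fix $\eta>0$. Since $\mu$ is Radon (hence outer regular on $K$), choose an open set $U\supseteq K$ with $\mu(U)\le\mu(K)+\eta$. By Urysohn's lemma pick $\vp\in C_0(\R^n)$ with $\one_K\le\vp\le\one_U$ (so $\vp$ has compact support, $0\le\vp\le 1$, $\vp\equiv 1$ on $K$, $\supp\vp\subseteq U$). Then by monotonicity of the Choquet integral, for every $k$,
\[
\mu_k(K)=\int\one_K\,d\mu_k\le\int\vp\,d\mu_k.
\]
Letting $k\to\infty$ and using weak convergence, $\limsup_k\mu_k(K)\le\int\vp\,d\mu\le\int\one_U\,d\mu=\mu(U)\le\mu(K)+\eta$. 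Since $\eta>0$ was arbitrary, the first inequality follows.

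\textbf{The open case.} Now I would prove $\mu(U)\le\liminf_k\mu_k(U)$. It suffices to show, for every compact $K\subseteq U$, that $\mu(K)\le\liminf_k\mu_k(U)$, and then take the supremum over such $K$ using inner regularity of the Radon measure $\mu$ (so $\mu(U)=\sup\{\mu(K):K\subseteq U\text{ compact}\}$). Fix such a $K$ and choose $\vp\in C_0(\R^n)$ with $\one_K\le\vp\le\one_U$ as before; note $\supp\vp\subseteq U$ so $\vp\le\one_U$ pointwise. Then for every $k$, by monotonicity of the Choquet integral, $\int\vp\,d\mu_k\le\int\one_U\,d\mu_k=\mu_k(U)$, hence
\[
\mu(K)\le\int\vp\,d\mu=\lim_{k\to\infty}\int\vp\,d\mu_k\le\liminf_{k\to\infty}\mu_k(U).
\]
Taking the supremum over compact $K\subseteq U$ gives the claim.

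\textbf{Main obstacle.} There is no deep obstacle here — the content of the lemma is entirely classical. The one point requiring a little care, given the nonstandard (non-additive) notion of measure in this section, is to check that the Choquet-integral inequalities $\int\one_A\,d\mu_k\le\int\vp\,d\mu_k\le\int\one_B\,d\mu_k$ used above really do hold for $\one_A\le\vp\le\one_B$; but this is immediate from the layer-cake definition $\int f\,d\mu_k=\int_0^\infty\mu_k(\{f>t\})\,dt$ together with monotonicity of $\mu_k$ (the superlevel sets are nested), and does not use additivity. One also uses that $\mu$, being assumed Radon, is both inner regular by compact sets and outer regular by open sets, which is where regularity of $\mu$ (as opposed to the $\mu_k$) enters.
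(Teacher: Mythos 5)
Your proof is correct and follows essentially the same route as the paper's: outer regularity of $\mu$ and a Urysohn bump function for the compact case, inner regularity and a Urysohn bump function for the open case, with monotonicity of the Choquet integral doing the work in both directions. The only cosmetic difference is that you make the monotonicity step explicit, whereas the paper leaves it implicit.
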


\begin{lemma}\label{l:convergingm}
	Suppose $\{\mu_k\}$ is a sequence of measures converging weakly to a Radon measure $\mu$. Suppose additionally there exists $C_0 >0$ such that each $\mu_k$ is $C_0$-Ahlfors $d$-regular (in the sense that it satisfies the upper and lower regularity condition with constant $C_0$, but may not be additive). Then, for any ball $B$, we have 
	\[
	\lim_{k \rightarrow \infty} \left( \sup_{p \in B \cap \emph{supp} \, \mu} \emph{dist}(p, \emph{supp} \, \mu_k) \right) = 0
	\]
	and
	\[
	\lim_{k \rightarrow \infty} \left(\sup_{p \in B \cap \emph{supp} \, \mu_k} \emph{dist}(p, \emph{supp} \, \mu) \right) = 0.
	\]
\end{lemma}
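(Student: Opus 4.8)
The plan is to prove each of the two limits separately, in each case arguing by contradiction: if the limit fails for some ball $B$, then along a subsequence there are offending points in $B$, which have a convergent subsequence since closed balls in $\R^{n}$ are compact, and one then plays the lower and upper semicontinuity estimates of Lemma \ref{l:semicontm} against the support structure. One preliminary point worth isolating at the outset, since the $\mu_{k}$ need not be additive, is that mass cannot sit outside the support: if an open set $U$ is disjoint from $\mathrm{supp}\,\mu_{k}$, then every point of $U$ has an open ball of zero $\mu_{k}$-measure, $U$ is Lindel\"of, hence is a countable union of such balls, and countable subadditivity gives $\mu_{k}(U)=0$; the same argument applies to $\mu$, so that $\mu(U)=0$ whenever $U$ is open and disjoint from $\mathrm{supp}\,\mu$.

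For the first limit, suppose it fails for some $B$. Passing to a subsequence, there are $\delta>0$ and $p_{k}\in B\cap\mathrm{supp}\,\mu$ with $\dist(p_{k},\mathrm{supp}\,\mu_{k})\ge\delta$, and, by compactness of $B$, we may assume $p_{k}\to p\in B$. Then $p\in\mathrm{supp}\,\mu$ (a closed set) and $\dist(p,\mathrm{supp}\,\mu_{k})\ge\delta/2$ for $k$ large, so $U:=B(p,\delta/2)^{\circ}$ is disjoint from $\mathrm{supp}\,\mu_{k}$, giving $\mu_{k}(U)=0$; on the other hand $\mu(U)>0$ since $p\in\mathrm{supp}\,\mu$. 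This contradicts $\mu(U)\le\liminf_{k}\mu_{k}(U)$ from Lemma \ref{l:semicontm}, and notably uses nothing about Ahlfors regularity.

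For the second limit, suppose it fails for some $B$. Passing to a subsequence, there are $\delta>0$ and $p_{k}\in B\cap\mathrm{supp}\,\mu_{k}$ with $\dist(p_{k},\mathrm{supp}\,\mu)\ge\delta$, and we may again assume $p_{k}\to p\in B$, so $\dist(p,\mathrm{supp}\,\mu)\ge\delta$ and the closed ball $K:=B(p,\delta/2)$ is disjoint from $\mathrm{supp}\,\mu$, whence $\mu(K)=0$. Here is where the lower Ahlfors regularity of the $\mu_{k}$ enters: since $p_{k}\in\mathrm{supp}\,\mu_{k}$, we have $\mu_{k}(B(p_{k},\delta/4))\ge C_{0}^{-1}(\delta/4)^{d}$, and $B(p_{k},\delta/4)\subseteq K$ for $k$ large, so $\limsup_{k}\mu_{k}(K)\ge C_{0}^{-1}(\delta/4)^{d}>0=\mu(K)$, contradicting $\mu(K)\ge\limsup_{k}\mu_{k}(K)$ from Lemma \ref{l:semicontm}.

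The lower regularity is exactly what makes the second direction work: it forbids $\mathrm{supp}\,\mu_{k}$ from developing thin pieces near $p$ that carry no mass, which a general weakly convergent sequence of measures could certainly exhibit. The only genuine care is needed in two spots: the ``no mass outside the support'' observation above (which must be verified without additivity), and the degenerate case $\diam\mathrm{supp}\,\mu_{k}\to0$, in which the lower regularity bound cannot be applied at the fixed scale $\delta/4$; this case does not occur in our application, where the sets $E_{k}$ have diameters bounded below, and can be excluded at the start.
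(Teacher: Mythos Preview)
Your argument is correct. For the first limit you and the paper do essentially the same thing: the paper uses a bump function $\phi$ with $\one_{B(p,\delta/2)}\le\phi\le\one_{B(p,\delta)}$ and gets $\int\phi\,d\mu>0$ while $\int\phi\,d\mu_{k}=0$, which is exactly what underlies the open-set inequality in Lemma \ref{l:semicontm} that you invoke. (The paper is slightly looser in that it tacitly assumes a single offending point $p$ works for all large $k$; your compactness step fixing this is the right thing to do.)

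For the second limit the two routes genuinely differ. The paper argues constructively: cover $B$ by finitely many balls $B_{1},\dots,B_{\ell}$ of radius $\ve$, take bump functions $\phi_{i}$ with $\one_{2B_{i}}\le\phi_{i}\le\one_{3B_{i}}$, and choose $K$ so large that $|\int\phi_{i}\,d\mu_{k}-\int\phi_{i}\,d\mu|\le(2C_{0})^{-1}\ve^{d}$ for all $i$ and $k\ge K$; then if $B_{i}$ meets $\supp\mu_{k}$, lower regularity gives $\int\phi_{i}\,d\mu_{k}\ge C_{0}^{-1}\ve^{d}$, hence $\int\phi_{i}\,d\mu>0$ and $3B_{i}$ meets $\supp\mu$, so every $p\in B\cap\supp\mu_{k}$ lies within $6\ve$ of $\supp\mu$. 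Your contradiction-and-compactness approach, feeding a limit point into the compact-set half of Lemma \ref{l:semicontm}, is shorter and avoids the covering, at the cost of being nonconstructive. Both use lower regularity at exactly one spot and both share the same implicit caveat you flag about $\diam\supp\mu_{k}$ staying bounded below (the paper's estimate $\int\phi_{i}\,d\mu_{k}\ge C_{0}^{-1}\ve^{d}$ needs $\ve<\diam\supp\mu_{k}$ just as your $\mu_{k}(B(p_{k},\delta/4))\ge C_{0}^{-1}(\delta/4)^{d}$ does). Your preliminary remark that $\mu_{k}(U)=0$ for $U$ open and disjoint from $\supp\mu_{k}$ follows from countable subadditivity alone is a point worth making explicitly, since the $\mu_{k}$ are not assumed additive; the paper does not isolate this.
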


The main result of this section is the following. 

\begin{lemma}\label{l:subseqm}
	Let $\{E_k\}$ be a sequence of $C_0$-Ahlfors $d$-regular sets in $\R^n$ and $\{\rho_k\}$ a sequence of positive real numbers such that $\rho_k \rightarrow 0.$ Let $\mu_k = \cH^d_{\rho_k}|_{E_k},$ then there exists sub-sequence $\{\mu_{k_j}\}$ and a Radon measure $\mu$ such that $\mu_{k_j} \rightharpoonup \mu.$
\end{lemma}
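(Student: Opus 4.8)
The plan is to invoke the standard weak compactness criterion for measures: if $\{\mu_k\}$ is a sequence of measures with $\sup_k \mu_k(K) < \infty$ for every compact $K \subseteq \R^n$, then some subsequence converges weakly to a Radon measure. (This is the version valid for the non-additive, countably subadditive set functions we are working with, and it is exactly the content of the cited material from Chapter 1 of \cite{Mattila} and Chapter III.5 of \cite{of-and-on}; one produces the limit via a diagonal argument over a countable dense subfamily of $C_0(\R^n)$ together with the Riesz representation theorem, using that the functionals $\varphi \mapsto \int \varphi \, d\mu_k$ are uniformly bounded on each $C_0(K)$.) So the only thing to check is the uniform local mass bound.

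First I would fix a compact set $K \subseteq \R^n$; by covering $K$ with finitely many balls it suffices to bound $\mu_k(B)$ uniformly in $k$ for a fixed ball $B = B(x_0, R)$. If $E_k \cap B = \emptyset$ then $\mu_k(B) = 0$, so assume there is a point $z_k \in E_k \cap B$. Then $E_k \cap B \subseteq E_k \cap B(z_k, 2R)$, and by monotonicity $\mu_k(B) \le \mu_k(B(z_k,2R)) = \cH^d_{\rho_k}(E_k \cap B(z_k,2R))$. Since $\cH^d_{\rho_k} \le \cH^d$ by \eqref{e:Hdecreasing}, and $E_k$ is $C_0$-Ahlfors $d$-regular, this is at most $\cH^d(E_k \cap B(z_k,2R)) \le C_0 (2R)^d$, provided $2R < \diam E_k$; if $\diam E_k \le 2R$ then $\cH^d(E_k) \le C_0 (\diam E_k)^d \le C_0(2R)^d$ anyway by covering $E_k$ by a single set. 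In all cases $\mu_k(B) \le C_0 (2R)^d$, a bound independent of $k$. Hence $\sup_k \mu_k(K) < \infty$ for every compact $K$.

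With the uniform local mass bound in hand, the weak compactness criterion produces a subsequence $\{\mu_{k_j}\}$ and a Radon measure $\mu$ with $\mu_{k_j} \rightharpoonup \mu$, which is the claim. I do not expect any serious obstacle here — the one point requiring a little care is making sure the compactness theorem is being applied in the correct generality, i.e. to monotone countably subadditive set functions rather than genuine Borel measures, but this is precisely why the authors recorded the preliminary lemmas of this section in that generality, and the uniform bound $\mu_k(B(x,r)) \lesssim_{C_0} r^d$ is robust to that relaxation since it only uses monotonicity and \eqref{e:Hdecreasing}.
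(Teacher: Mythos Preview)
There is a genuine gap in your argument, and it lies exactly where you flagged the concern and then dismissed it. The standard weak compactness criterion (e.g.\ \cite[Lemma~1.23]{Mattila}) is proved for Radon measures, and its proof uses in an essential way that each functional $\varphi \mapsto \int \varphi \, d\mu_k$ is \emph{linear}: one diagonalises over a countable dense set, obtains a limit functional, notes that linearity passes to limits, and then invokes the Riesz representation theorem. In the present situation $\mu_k = \cH^d_{\rho_k}|_{E_k}$ is only subadditive, the integral is a Choquet integral, and $\varphi \mapsto \int \varphi \, d\mu_k$ is \emph{not} linear (see Lemma~\ref{l:quasiadd}). So even after your uniform local mass bound and a diagonal extraction, you have no reason a priori to expect the limit functional $L$ to be linear, and without that you cannot apply Riesz. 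The preliminary lemmas of Section~\ref{s:WCM} that you point to (Lemmas~\ref{l:semicontm} and~\ref{l:convergingm}) are about consequences of weak convergence once it is established; they do not furnish a compactness statement for non-additive set functions.

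The paper's proof in the appendix is devoted almost entirely to closing this gap. The key observation is that although $\cH^d_{\rho_k}$ is not additive, it \emph{is} additive on families of sets that are pairwise $2\rho_k$-separated (Lemma~\ref{l:step} and Corollary~\ref{c:additivity}). One then approximates $\phi,\varphi \in C_0^+$ by step functions constant on shrunken dyadic cubes at a fine scale; for $k$ large these cubes are $2\rho_k$-separated, so additivity holds on the step approximants. The error terms, supported near the dyadic grid, are controlled using quasi-subadditivity (Lemma~\ref{l:quasiadd}) together with the auxiliary weak limit $\tilde\mu$ of the genuine Hausdorff measures $\cH^d|_{E_k}$, which lets one choose a translate of the grid with small mass. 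This yields $L(\phi+\varphi)=L\phi+L\varphi$ on $C_0^+$ (Lemma~\ref{l:linear}), and then the extension to $C_0$ and the application of Riesz go through. Your uniform bound $\mu_k(B)\le C_0(2R)^d$ is correct and is used, but it is only the starting point.
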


\section{Proof of Theorem \ref{t:WLD}}
\label{s:proof of main}

We recall the properties of the Christ-David cubes from \cite{Dav88,Chr90}. Let $E \subseteq \R^n$ be $C_0$-Ahlfors $d$-regular. Let $X_n$ be a sequence of maximal $2^{-n}$-separated nets in $E$ and
\[
\cD^E = \bigcup_{j \in \bZ} \cD^E_j
\]
denote the Christ-David cubes with respect to this sequence of nets. If the context is clear, we shall drop the superscript $E$. For a measure $\mu,$ denote $\cD^\mu = \cD^{\text{supp} \, \mu}.$ The cubes in $\cD$ satisfy the following: 
\begin{enumerate}[(i)]
\item For each $j \in \bZ,$ $E = \bigcup_{Q \in \cD_j} Q$.
\item If $Q \in \cD_j$ and $Q^\prime \in \cD_k$ for $j \leq k$ then either $Q^\prime \subseteq Q$ or $Q \cap Q^\prime = \emptyset.$
\item There exists $c_0$ such that the following holds. For $j \in \bZ$ and $Q \in \cD_j$, let $\ell(Q) = 2^{-j},$ there is $x_Q \in Q$ such that 
\[
B_E(x_Q,c_0\ell(Q)) \subseteq Q \subseteq B_E(x_Q , \ell(Q)). 
\]

\end{enumerate}
Given a cube $Q$, denote
\[
B_Q = B(x_Q,r_Q) = B(x_Q,3\ell(Q)).
\]
We say a collection of cubes $\mathscr{C} \subseteq \cD$ satisfies a \textit{Carleson packing condition} if there exists $C>0$ such that for each $R \in \cD,$
\[
\sum_{\substack{Q \in \mathscr{C} \\ Q \subseteq R}} \ell(Q)^d  \leq C\ell(R)^d.
\]
The main idea behind the proof of Theorem \ref{t:WLD} is that if $E$ satisfies the WLD condition, then at most scales and locations, $E$ may be approximated by the support of some uniform measure (see definition below). We use this, along with \cite{Tol15}, to finish the proof. 

We recall some notation and results from \cite{Tol15}. A Borel measure $\mu$ in $\R^n$ is said to be \textit{$d$-uniform} if the exists a constant $c>0$ such that 
\[
\mu(B(x,r)) = cr^d
\]
for all $x \in \supp \mu$ and $r > 0.$

Given a ball $B$ and two Radon measures $\mu$ and $\nu$ such that $\supp \mu \cap B \not= \emptyset$ and $\supp \nu \cap B \not=\emptyset$, define
\[
d_B(\mu,\nu) = \sup_{x \in B \cap \text{supp} \, \nu} \text{dist}(x,\text{supp} \, \mu) + \sup_{x \in B \cap \text{supp} \, \mu} \text{dist}(x,\text{supp} \, \nu).
\]
For a Radon measure $\mu$ and a constant $\eta >0,$ let $\mathscr{N}_0(\mu,\eta)$ be the collection of balls $B$ such that there exists a $d$-uniform measure $\nu$ in $\R^n$ satisfying
\[
d_B( \mu, \nu) \leq \eta.
\]
Furthermore, let $\mathscr{N}(\mu,\eta)$ denote the set of cubes $Q  \in \cD^\mu$ such that $B_Q \in \mathscr{N}_0(\mu,\eta).$ When the context is clear, we shall simply write $\mathscr{N}(\eta)$ and $\mathscr{N}_0(\eta).$ 

Although not explicitly stated, in the Section 4 of \cite{Tol15}, Tolsa proves the following:
\begin{proposition}\label{p:Tolsa}
Suppose $\mu$ is an Ahlfors $d$-regular measure and $\cD^\mu \setminus \cN(\eta)$ satisfies a Carleson packing condition for each $\eta >0.$ Then, $\mu$ is UR. 
\end{proposition}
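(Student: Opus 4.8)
## Proof proposal for Proposition \ref{p:Tolsa}

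The plan is to quote the relevant pieces of Tolsa's Section 4 of \cite{Tol15} in the correct order and verify that they assemble into the statement. Tolsa's main structural input is that if $\mu$ is Ahlfors $d$-regular and $B_Q \in \mathscr{N}_0(\mu,\eta)$, then at the scale and location of $Q$ the measure $\mu$ is close (in the flat/coherent sense, after rescaling) to a $d$-uniform measure $\nu$; Preiss's theory (and its quantitative refinements used by Tolsa) tells us that $d$-uniform measures have very rigid structure — in particular their supports are, at all scales, either flat $d$-planes or one of a controlled family of conical/algebraic models. So the first step I would carry out is to make precise the dichotomy: fix a small threshold $\eta_0$ (depending only on $n,d$) and split $\mathscr{N}(\eta_0)$ into the cubes where the approximating uniform measure is ``flat'' (its support is $\eta_0'$-close to a $d$-plane in $B_Q$) and those where it is ``conical'' (genuinely non-flat uniform measures, which by Preiss/Kirchheim–Preiss only occur in dimensions $d \geq 3$ and form a compact family of models bounded away from flatness).

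Second, I would handle the flat cubes: if $B_Q \in \mathscr{N}_0(\mu,\eta)$ with $\eta$ small and the model $\nu$ is additionally flat at scale $r_Q$, then $d_{C_0 B_Q}(\operatorname{supp}\mu, P) \lesssim \eta$ for some $d$-plane $P$, i.e. $Q$ contributes to the ``good'' (BWGL-small) part. Thus the cubes in $\mathscr{N}(\eta)$ that are \emph{not} flat are exactly the non-flat/conical ones. Third — and this is Tolsa's key quantitative lemma — one shows that the family of non-flat cubes in $\mathscr{N}(\eta)$ (for appropriately chosen $\eta$), together with $\cD^\mu \setminus \mathscr{N}(\eta)$, satisfies a Carleson packing condition: the hypothesis gives the packing of $\cD^\mu\setminus\mathscr{N}(\eta)$ directly, and the non-flat cubes pack because a non-flat uniform model is quantitatively non-flat, so a non-flat cube forces the Jones-type $\beta$-number (or the David–Semmes flatness coefficient) of $\mu$ at $B_Q$ to be $\gtrsim_{n,d} 1$, and those cubes always pack by the Ahlfors regularity of $\mu$ and a standard stopping-time / energy argument (this is exactly the ``non-flat uniform measures are negligible'' mechanism in \cite{Tol15}). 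Combining, $\cD^\mu$ minus the flat cubes satisfies a Carleson packing condition, which is precisely the bilateral weak geometric lemma for $\mu$; since BWGL characterizes UR for Ahlfors regular measures (\cite{of-and-on}), $\mu$ is UR.

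The main obstacle is the third step: showing that non-flat cubes pack. This is not a formal consequence of compactness alone — one needs the quantitative statement that membership of $B_Q$ in $\mathscr{N}_0(\mu,\eta)$ with a non-flat model propagates to give a genuine, scale-invariant lower bound on the flatness defect of $\mu$ itself at $B_Q$ (so that a compactness-and-contradiction argument against the flat UR case applies), and then invoking that the set of cubes with flatness defect bounded below is Carleson. All of this is carried out in Section 4 of \cite{Tol15}; what remains for us is only to observe that the argument there uses the hypothesis ``$\cD^\mu \setminus \mathscr{N}(\eta)$ is Carleson for each $\eta$'' in exactly the place where it is needed, and otherwise goes through verbatim. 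A secondary (minor) point is bookkeeping the quantifier on $\eta$: one must check that a \emph{single} choice of $\eta=\eta(n,d)$ suffices to run the dichotomy, so that only one instance of the hypothesis is invoked.
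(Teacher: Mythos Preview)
The paper does not give its own proof of this proposition: it simply records that the statement is implicit in Section~4 of \cite{Tol15} (specifically, in the proof of Theorem~1.1 there, using the structural results on uniform measures from Sections~1--3 of that paper). Your proposal goes further and tries to sketch the actual mechanism, and your overall architecture---split $\mathscr{N}(\eta)$ into flat and non-flat cubes, show the non-flat ones pack, conclude BWGL---does match Tolsa's.

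However, your justification of the key third step contains a genuine gap. You assert that non-flat cubes pack because they force the $\beta$-number of $\mu$ to be $\gtrsim 1$ and ``those cubes always pack by the Ahlfors regularity of $\mu$ and a standard stopping-time/energy argument''. This is false for general Ahlfors $d$-regular measures: the cubes with $\beta$ bounded below need \emph{not} satisfy a Carleson packing condition (the four-corner Cantor set is Ahlfors $1$-regular with $\beta\gtrsim 1$ at every scale and location). Packing of large-$\beta$ cubes is essentially the Weak Geometric Lemma, which is not automatic and is close to what one is trying to establish. The same objection applies to your reformulation in the last paragraph (``the set of cubes with flatness defect bounded below is Carleson''): that is not a consequence of Ahlfors regularity alone.

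Tolsa's actual mechanism is different and uses the fine structure of uniform measures developed in Sections~1--3 of \cite{Tol15}. Via the Kirchheim--Preiss analyticity of $\operatorname{supp}\nu$ and the existence of flat points, he shows that for \emph{any} $d$-uniform measure $\nu$ and any ball $B$ centred on $\operatorname{supp}\nu$, there is a sub-ball $B'\subset B$ of comparable radius on which $\operatorname{supp}\nu$ is quantitatively close to a $d$-plane. Hence if $Q\in\mathscr{N}(\eta)$ has a non-flat model, some descendant $Q'\subseteq Q$ at \emph{bounded generational depth} is bilaterally flat. The Carleson packing of the non-flat cubes then follows from this bounded-depth descent combined with the assumed packing of $\cD^\mu\setminus\mathscr{N}(\eta)$, not from any $\beta$-number argument applied to $\mu$ directly.
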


Given the results on uniform measure contained in Sections 1 - 3 of \cite{Tol15}, the proof of the above Proposition \ref{p:Tolsa} is contained within the proof of Theorem 1.1 of the aforementioned paper, beginning on page 16. With the following result of David and Semmes (see \cite[Chapter III.5]{of-and-on}), Proposition \ref{p:Tolsa}  proves the WCD condition implies UR. 

\begin{proposition}
Suppose $\mu$ satisfies the \emph{WCD} condition, then $\cD^\mu \setminus \cN(\eta)$ satisfies a Carleson packing condition for each $\eta >0.$
\end{proposition}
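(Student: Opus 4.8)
The plan is to reduce the statement to a \emph{local} claim and then invoke the structure theory of uniform measures from \cite[Chapter III.5]{of-and-on} and \cite{Tol15}. The local claim I would aim for is: for every $\eta>0$ there is $\ve=\ve(\eta,c_1,d)>0$ such that whenever $Q\in\cD^\mu$ and $(x_Q,r_Q)\in\cA_E(c_1,\ve)$, then $Q\in\cN(\eta)$. Granting this, the proposition follows from the standard passage between Carleson sets and Carleson-packed families of cubes: under the WCD hypothesis $\cB_E(c_1,\ve(\eta))$ is a Carleson set, while the contrapositive of the local claim shows that $Q\in\cD^\mu\setminus\cN(\eta)$ forces the whole Whitney region of $Q$ into $\cB_E(c_1,\ve(\eta))$ (one phrases the claim in terms of a good pair at a scale comparable to $\ell(Q)$ to this end); summing over such $Q$ and using the bounded overlap of Whitney regions gives the Carleson packing condition for $\cD^\mu\setminus\cN(\eta)$.

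First I would prove the local claim by contradiction and a blow-up. If it fails for some $\eta>0$, then for each $j\in\bN$ there is $Q_j\in\cD^\mu$ with $(x_{Q_j},r_{Q_j})\in\cA_E(c_1,1/j)$ but $Q_j\notin\cN(\eta)$; let $\sigma_j:=\sigma_{x_{Q_j},r_{Q_j}}$ be the associated $c_1$-regular measure, so $\supp\sigma_j=\supp\mu$ and $|\sigma_j(B(y,t))-t^d|<r_{Q_j}^d/j$ for all $y\in\supp\mu\cap B(x_{Q_j},r_{Q_j})$. Translate and dilate by $r_{Q_j}^{-1}$ about $x_{Q_j}$ so that $B_{Q_j}$ becomes the unit ball, producing $c_1$-regular measures $\tilde\sigma_j,\tilde\mu_j$ with $0\in\supp\tilde\sigma_j=\supp\tilde\mu_j$ and $\sup\{|\tilde\sigma_j(B(y,t))-t^d|:y\in\supp\tilde\sigma_j\cap B(0,1),\ 0<t<1\}\to0$ as $j\to\infty$. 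By weak-$*$ compactness of uniformly locally bounded measures (cf.\ the proof of Lemma~\ref{l:subseqm}) I pass to a subsequence with $\tilde\sigma_j\rightharpoonup\sigma_\infty$ for some $c_1$-regular Radon measure $\sigma_\infty$, and Lemma~\ref{l:convergingm} then gives $\supp\tilde\sigma_j\to\supp\sigma_\infty$ locally in Hausdorff distance. Sandwiching each ball $B(y,t)$ between concentric open and closed balls (to get around the fact that spheres need not be $\sigma_\infty$-null), the approximate-uniformity estimates pass to the limit and give $\sigma_\infty(B(y,t))=t^d$ for all $y\in\supp\sigma_\infty\cap B(0,\tfrac12)$ and all $0<t<\tfrac12$: that is, $\sigma_\infty$ is uniform near the origin.

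The hard part will be the last step: showing that this limit measure, which is uniform only on a ball and only for a bounded range of scales, is close inside $B(0,1)$ to a \emph{genuine} $d$-uniform measure $\nu$ --- one with $\nu(B(z,s))=cs^d$ for \emph{all} $z\in\supp\nu$ and \emph{all} $s>0$, as the definition of $\cN_0$ demands. Once such a $\nu$ is produced, then because $\supp\tilde\mu_j=\supp\tilde\sigma_j\to\supp\sigma_\infty$ one obtains $d_{B(0,1)}(\tilde\mu_j,\nu)$ as small as desired for $j$ large, hence $Q_j\in\cN(\eta)$ for those $j$, contradicting the choice of the $Q_j$ and proving the local claim. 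This upgrade from ``uniform on a range of scales'' to ``uniform at every scale'' is precisely the rigidity/structure theory of $d$-uniform measures developed in Sections~1--3 of \cite{Tol15} and exploited in the compactness arguments of \cite[Chapter III.5]{of-and-on}; I would import it rather than reprove it. The remaining work --- keeping track of the Carleson packing constants, and checking compatibility of the various limits with the quantity $d_B$ --- is routine.
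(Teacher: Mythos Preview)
The paper does not give its own proof of this proposition; it simply attributes it to David and Semmes \cite[Chapter III.5]{of-and-on}. Your outline is indeed the David--Semmes scheme (local claim via compactness, then Carleson bookkeeping), and the bookkeeping half is fine.

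The gap is in the local claim itself. As you formulate it, $(x_Q,r_Q)\in\cA_E(c_1,\ve)$ with $\ve$ small should force $Q\in\cN(\eta)$, and your blow-up reduces this to the assertion that any $c_1$-regular measure $\sigma_\infty$ satisfying $\sigma_\infty(B(y,t))=t^d$ for $y\in\supp\sigma_\infty\cap B(0,\tfrac12)$ and $0<t<\tfrac12$ must be close on $\bB$ to a genuine $d$-uniform measure. That assertion is false: take $d=1$, $n=2$, and $\sigma_\infty=\tfrac12\cH^1|_{L_1\cup L_2}$ with $L_1=\{y=0\}\ni 0$ and $L_2=\{y=\tfrac12\}$. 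Then $\sigma_\infty(B(y,t))=t$ for every $y\in\supp\sigma_\infty\cap B(0,\tfrac12)$ and $0<t<\tfrac12$, yet $\supp\sigma_\infty\cap\bB$ contains two parallel chords at distance $\tfrac12$ and is therefore bounded away from every single line, which is the only possible support of a $1$-uniform measure in $\R^2$. Sections 1--3 of \cite{Tol15} treat measures uniform at \emph{all} scales and say nothing about this situation. What the David--Semmes argument actually does is introduce a further parameter $A=A(\eta)$: the correct local claim is that $(x_Q,Ar_Q)\in\cA_E(c_1,\ve)$ forces $Q\in\cN(\eta)$, so that after blow-up one obtains exact uniformity on $A\bB$ for radii up to $A$, and then a \emph{second} compactness argument (sending $A\to\infty$) manufactures the genuine uniform limit. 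That second compactness is precisely \cite[Lemma III.5.13]{of-and-on}, restated in this paper as the first unnamed lemma of the subsection on approximation by uniform measures; the paper's own WLD analogue, Lemma~\ref{l:subset}, is assembled in exactly this two-step way. Once you insert the parameter $A$ into your local claim, the rest of your sketch goes through unchanged, since $\cB_E(c_1,\ve)$ is still a Carleson set and you are merely testing membership at the scale $Ar_Q$ rather than $r_Q$.
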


The main goal of this section is to prove the following lemma, analogous to the above result of David and Semmes. This, along with Proposition \ref{p:Tolsa}, will finish the proof of Theorem \ref{t:WLD}.

\begin{lemma}\label{l:uniform}
Suppose $E \subseteq \R^n$ satisfies the WLD condition. Then $\cD^E\setminus\mathscr{N}(\eta)$ satisfies a Carleson packing condition for each $\eta >0.$
\end{lemma}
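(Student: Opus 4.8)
The plan is to prove the contrapositive in the usual Carleson-packing way: fix $\eta>0$ and a cube $R\in\cD^E$, and show $\sum_{Q\subseteq R,\,Q\notin\cN(\eta)}\ell(Q)^d\lec \ell(R)^d$, with the constant depending on $\eta$, $C_0$, $n$, $d$ and the WLD Carleson norm for an appropriate $\ve=\ve(\eta)$. The mechanism linking WLD to the uniform-measure approximation $\cN(\eta)$ will be a compactness/blow-up argument: if $Q\notin\cN(\eta)$ occurred at a positive fraction of scales and locations in $R$ in a way \emph{not} controlled by a WLD Carleson sum, I would extract a sequence of cubes $Q_j$ with $\ell(Q_j)\to 0$ (or a fixed scale with translations/dilations) and rescale $E$ around each $Q_j$ to unit size, obtaining sets $E_j$ that are $C_0$-Ahlfors $d$-regular, contain (say) the origin, and for which the rescaled content functionals do not dip much below $1$ at scales comparable to $1$.

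The second step is the convergence input from Section~\ref{s:WCM}. Setting $\rho_j\to 0$ to be the rescaled value of the threshold at which \eqref{e:WLD} becomes trivial, consider $\mu_j=\cH^d_{\rho_j}|_{E_j}$. By Lemma~\ref{l:subseqm} a subsequence converges weakly to a Radon measure $\mu$, by Lemma~\ref{l:convergingm} the supports converge locally in Hausdorff distance (so $\supp\mu$ is again $C_0$-Ahlfors $d$-regular, using lower semicontinuity of content and the upper regularity bound), and by Lemma~\ref{l:semicontm} the weak limit inherits density bounds: the failure of \eqref{e:WLD} along the sequence gives $\mu(B(y,t))\geq (2t)^d$ for all $y\in\supp\mu$ and all $t$ in a fixed range, while the content of a set is at most $(2t)^d$-ish from above by \eqref{e:federer}-type reasoning together with the general content-jump Theorem~\ref{t:main} (this is precisely why we only needed a one-sided WLD hypothesis). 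Pushing $\rho_j\to 0$ and $R$-scales $\to 0$, the limit measure $\mu$ satisfies $\mu(B(y,t))=(2t)^d$ for all $y\in\supp\mu$ and all $t>0$, i.e. $\mu$ is $d$-uniform with the normalizing constant $c=2^d$. Then $d_{B_{Q_j}}(\cH^d|_{E},\nu)\to 0$ for this uniform $\nu$ (after undoing the rescaling), contradicting $Q_j\notin\cN(\eta)$ for the fixed $\eta$.

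To make this quantitative rather than merely a limiting statement — which is what a Carleson packing condition demands — I would run the argument by contradiction at the level of the whole packing: suppose no constant works, so there are $R_m$ and $E^{(m)}$ with $\sum_{Q\subseteq R_m,\,Q\notin\cN(\eta)}\ell(Q)^d \geq m\,\ell(R_m)^d$ while the WLD norm for $E^{(m)}$ at the relevant $\ve$ stays bounded. Normalize so $\ell(R_m)=1$ and $R_m\ni 0$; the WLD bound says the set of bad pairs for $E^{(m)}$ has Carleson norm $\lec 1$, so by a pigeonhole/Chebyshev argument most cubes $Q\subseteq R_m$ have the property that \eqref{e:WLD} fails for \emph{all} admissible $(y,t)$ with $y\in E^{(m)}\cap B_Q$ and $t$ not too small relative to $\ell(Q)$; combined with the hypothesis that a $\gtrsim m$-heavy packing of cubes lies outside $\cN(\eta)$, one finds a \emph{single} cube $Q_m$ (say with $\ell(Q_m)$ bounded below, or passing to $\ell(Q_m)\to0$) that is simultaneously ``good for WLD'' and outside $\cN(\eta)$, and then rescale $Q_m$ to unit size to feed into the compactness argument above. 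This yields the contradiction and hence the packing bound.

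The main obstacle I expect is the interplay between Hausdorff \emph{content} (which is not additive) and the weak limit: one must be careful that the measures $\mu_j=\cH^d_{\rho_j}|_{E_j}$ behave well in the limit — upper regularity passes to the limit, but one needs the lower bound $\mu(B(y,t))\geq(2t)^d$ to survive, which requires the failure of \eqref{e:WLD} to be quantified uniformly in a genuine range of $t$ (not just $t=\ell(Q)$), and one must rule out mass escaping or the support degenerating; this is exactly the role of Lemmas~\ref{l:semicontm}--\ref{l:subseqm} and of Theorem~\ref{t:main} for the matching upper bound on content, and threading the $\ve\to 0$, $\rho\to 0$, $\ell(R)\to 0$ limits in the right order is the delicate bookkeeping. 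A secondary obstacle is extracting the ``good'' cube $Q_m$ cleanly: one needs the WLD Carleson sum to be summed against scales $\log$-weighted so that a heavy $\ell(Q)^d$-packing outside $\cN(\eta)$ cannot be entirely absorbed by the bad set $\cB_{\mathrm{WLD}}(\ve)$, which forces an honest choice of $\ve$ in terms of $\eta$ and a Fubini-type rearrangement of $\int\int \one_{\cB_{\mathrm{WLD}}(\ve)}\,d\cH^d\,\frac{dr}{r}$.
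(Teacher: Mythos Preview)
Your proposal uses the same ingredients as the paper --- the weak-convergence lemmas of Section~\ref{s:WCM}, Theorem~\ref{t:main} for the upper content bound, and a Fubini rearrangement of the WLD Carleson integral for the lower bound --- and the logic is sound, but the organization differs. The paper decouples the compactness from the packing rather than running a single contradiction at the packing level. It first introduces the class $\cG(A,\ve,\rho)$ of cubes $Q$ satisfying both $\cH^d_{\rho r_Q}(E\cap AB_Q)\le(1+\ve)(2Ar_Q)^d$ and $\cH^d_\infty(E\cap B(y,t))\ge(2t)^d-\ve(2Ar_Q)^d$ for all admissible $(y,t)$, and then establishes two separate facts: (i) via compactness over the abstract family $\mathscr{U}(2A,C_0,\ve,\rho)$, there exist $A,\ve,\rho$ depending only on $\eta,C_0,n,d$ with $\cG(A,\ve,\rho)\subseteq\cN(\eta)$; (ii) for these now \emph{fixed} parameters, $\cD\setminus\cG(A,\ve,\rho)\subseteq\cB_1(A,\ve)\cup\cB_2(A,\ve,\rho)$ has Carleson packing directly, $\cB_1$ by the WLD hypothesis via the Fubini argument you sketch and $\cB_2$ by Theorem~\ref{t:main}. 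The inclusion $\cD\setminus\cN(\eta)\subseteq\cB_1\cup\cB_2$ finishes the proof with no contradiction at the packing level. The paper also splits the compactness in (i) into two nested contradictions --- first $\ve,\rho\to0$ with $A$ fixed to obtain a limit measure exactly uniform on $A\bB$, then $A\to\infty$ to get a globally $d$-uniform measure --- which is precisely the clean resolution of the ``threading $\ve\to0$, $\rho\to0$, $A\to\infty$ in the right order'' bookkeeping you flag as the main obstacle; in your single-limit version, fixed $\ve$ only yields $\mu(B(y,t))\ge(2t)^d-C\ve$ in the limit, so you would need $\ve_m\to0$ while still beating the $\ve_m$-dependent WLD norm, and a separate diagonalization over $A$. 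Finally, varying the set $E^{(m)}$ is unnecessary: the lemma is for a fixed $E$, and once $A,\ve,\rho$ are selected by the abstract compactness, the packing for that $E$ is a direct estimate.
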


For $A \geq 1$ and $\ve,\rho > 0,$ let $\cG(A,\ve,\rho)$ be the collection of cubes $Q \in \cD$ such that
\[
\cH^d_{\rho r_Q} (E \cap AB_Q) \leq (1+\ve)(2Ar_Q)^d
\]
and 
\[
\cH_\infty^d(E \cap B(x,r)) \geq (2r)^d - \ve (2Ar_Q)^d
\]
for all $x \in E \cap AB_Q$ and $0 < r< Ar_Q.$

We prove Lemma \ref{l:uniform} by showing, for suitable choices of $A,\ve, \rho$, that for each $Q \in \cG(A,\ve,\rho)$ there is a $d$-uniform measure such that $E$ is locally well-approximated by $\text{supp} \, \mu.$ The Carleson packing condition on $\cD\setminus \mathscr{N}(\eta)$ will follow from packing conditions on $\cB(A,\ve,\rho) = \cD\setminus \cG(A,\ve,\rho),$ which in turn follow from Theorem \ref{t:main} and the definition of WLD.

Denote by $\cB_1(A,\ve)$ the set of cubes $Q$ in $\cD$ for which there exists $y \in E \cap AB_Q$ and $0 < r < Ar_Q$ satisfying 
\begin{align}\label{e:lower}
\cH^d_\infty(E \cap B(y,r)) < (2r)^d - \ve (2Ar_Q)^d.
\end{align}
Additionally, let $\cB_2(A,\ve,\rho)$ denote the set of cubes $Q$ such that
\[
\cH^d_{\rho r_Q}(E \cap AB_Q) > (1+\ve) (2Ar_Q)^d.
\]

\begin{lemma}\label{l:car1}
Suppose $E$ satisfies the WLD condition, then $\cB_1(A,\ve)$ satisfies a Carleson packing condition for each $A \geq 1$ and $\ve >0.$ 
\end{lemma}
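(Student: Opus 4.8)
The plan is to show that membership of a cube $Q$ in $\cB_1(A,\ve)$ can be ``read off'' from membership of a nearby pair $(x,r)$ in $\cB_{\rm WLD}(\ve')$ for a suitable $\ve'$ comparable to $\ve$, and then transfer the Carleson packing condition for $\cB_{\rm WLD}$ (which is the hypothesis that $E$ satisfies the WLD condition) to a Carleson packing condition for $\cB_1(A,\ve)$ on the level of Christ--David cubes. First I would fix $A\ge 1$ and $\ve>0$ and take $Q\in\cB_1(A,\ve)$; by definition there is $y\in E\cap AB_Q$ and $0<r<Ar_Q$ with $\cH^d_\infty(E\cap B(y,r))<(2r)^d-\ve(2Ar_Q)^d$. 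Setting $x=x_Q$ and $R=2Ar_Q$ (so that $B(y,r)\subseteq B(x,R)$ since $|x-y|\le Ar_Q<R$ and $r<Ar_Q<R$, and $y\in E\cap B(x,R)$), and noting $(2Ar_Q)^d=R^d\ge \ve''(2R)^d$ for an absolute $\ve''$, we get $\cH^d_\infty(E\cap B(y,r))<(2r)^d-\ve\,4^{-d/?}\cdots$ — the point is that $(x,R)\in\cB_{\rm WLD}(c\ve)$ for a dimensional constant $c=c(A,d)$. Thus every cube in $\cB_1(A,\ve)$ is ``charged'' by a pair in $\cB_{\rm WLD}(c\ve)$ at a comparable scale and location.

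Next I would turn this pointwise containment into a packing estimate. Fix $R\in\cD$. For each $Q\in\cB_1(A,\ve)$ with $Q\subseteq R$, the associated pair $(x_Q,2Ar_Q)$ lies in $\cB_{\rm WLD}(c\ve)$ and satisfies $x_Q\in R$, $2Ar_Q\le 2A\ell(Q)\lec \ell(R)$, so $(x_Q,2Ar_Q)\in \cB_{\rm WLD}(c\ve)\cap\big(B_R\times(0,C\ell(R))\big)$ for a suitable $C=C(A)$. Using Ahlfors regularity, I would compare $\sum_{Q\subseteq R}\ell(Q)^d$ over cubes in $\cB_1(A,\ve)$ to the Carleson measure $\one_{\cB_{\rm WLD}(c\ve)}\,d\cH^d(x)\frac{dr}{r}$ evaluated on the Carleson box $B(x_R, C\ell(R))\times(0,C\ell(R))$: each cube $Q$ ``occupies'' a region $Q\times[r_Q,2r_Q]$ (say) of $\cH^d\times\frac{dr}{r}$-measure $\sim \ell(Q)^d$, these regions have bounded overlap by property (ii) of the Christ--David cubes, and on each such region the integrand $\one_{\cB_{\rm WLD}}$ is not literally $1$ — this is the point where care is needed (see below). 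Summing and invoking the WLD hypothesis that $\cB_{\rm WLD}(c\ve)$ is a Carleson set, with Carleson norm $N(c\ve)$, yields $\sum_{Q\in\cB_1(A,\ve),\,Q\subseteq R}\ell(Q)^d\lec_{A,d} N(c\ve)\,\ell(R)^d$, which is the desired packing condition.

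The main obstacle is the last technical point: the pair $(x_Q,2Ar_Q)$ lies in $\cB_{\rm WLD}(c\ve)$, but a Carleson box integral of $\one_{\cB_{\rm WLD}}$ only ``sees'' a single point $(x_Q, 2Ar_Q)$ for each $Q$, not a region of positive $\frac{dr}{r}$-measure, so one cannot directly bound $\sum\ell(Q)^d$ by an integral unless one first thickens. The fix is to observe that the defining inequality \eqref{e:WLD} is an \emph{open} condition stable under small perturbations of $r$: if $\cH^d_\infty(E\cap B(y,r_0))<(2r_0)^d-\ve_0(2R_0)^d$ with $R_0=2Ar_Q$, then for all $R$ with $R_0\le R\le 2R_0$ one still has $\cH^d_\infty(E\cap B(y,r_0))<(2r_0)^d-\tfrac14\ve_0(2R)^d$ (shrinking the content-deficit constant by a dimensional factor), so $(x_Q,R)\in\cB_{\rm WLD}(\ve_0/4)$ for \emph{all} $R\in[R_0,2R_0]$; hence each $Q$ contributes a genuine region $\{x_Q\}\times[R_0,2R_0]$ — and, using lower Ahlfors regularity to fatten $\{x_Q\}$ to a ball of radius $\sim r_Q$ contained in a fixed multiple of $Q$ on which \eqref{e:WLD} still holds with a further halved constant (using $|x_Q-y'|\le 2r_Q$ absorbed into the comparison of radii) — a region of $\cH^d\times\frac{dr}{r}$-measure $\gtrsim \ell(Q)^d$ inside $\cB_{\rm WLD}(\ve_0/8)$. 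With bounded overlap from property (ii), the sum is controlled and the proof concludes. I would also double check that the exact constants ($2A$ versus $A$, the dimensional factors from $(2Ar_Q)^d$ versus $(2\cdot 2Ar_Q)^d$) are harmless, absorbing everything into the implicit constants and into the rescaling $\ve\mapsto c\ve$, which only affects the final Carleson norm through $N(c\ve)$, exactly as allowed since the WLD hypothesis gives a (possibly $\ve$-dependent) norm for every $\ve>0$.
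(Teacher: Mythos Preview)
Your proposal is correct and follows essentially the same approach as the paper: show that $Q\in\cB_1(A,\ve)$ forces $(x,R)\in\cB_{\rm WLD}(c\ve)$ for a whole product region of $(x,R)$ at scale and location comparable to $Q$, then dominate $\sum\ell(Q)^d$ by the Carleson integral of $\one_{\cB_{\rm WLD}(c\ve)}$ over a box above $R$. The paper's execution is marginally cleaner in the spatial thickening step---rather than fattening $\{x_Q\}$ to a ball via lower regularity, it observes directly that for \emph{every} $x\in Q$ one has $y\in B(x,2Ar_Q)$ (since $|x-x_Q|\le\ell(Q)<Ar_Q$), so the entire cube $Q$ already lies in the good set and $\cH^d(Q)\sim\ell(Q)^d$ finishes it---but this is a cosmetic difference, not a different idea.
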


\begin{proof}
Let $A \geq 1,$ $\ve >0$ and $R \in \cD.$ Let $Q \in \cB_1(A,\ve)$, and let $B = B(y,r)$ be the ball satisfying \eqref{e:lower}. Thus, if $x \in Q$ then $y \in B(x,2Ar_Q)$ and 
\[
\cH^d_\infty(E \cap B(y,r)) < (2r)^d - \ve (2Ar_Q)^d = (2r)^d - 2^{-d}\ve (4Ar_Q)^d,
\]
that is $(x,2Ar_Q) \in \cB_\text{WLD}(\ve/2^d) \subseteq \cB_\text{WLD}(\ve/4^d).$ Similarly, one can show that $(x,\alpha A r_Q) \in \cB_\text{WLD}(\ve/4^d)$ for each $2 \leq \alpha \leq 4.$ Let $k^*$ be such that $R \in \cD_{k^*}.$ Denoting $\cB_{k,1}(A,\ve) = \cB_1(A,\ve) \cap \mathscr{D}_k,$ we have
\begin{align*}
\sum_{\substack{Q \in \cB_1(A,\ve) \\ Q \subseteq R}} \ell(Q)^d &\leq \sum_{k = k^*}^\infty  \int_{3A2^{-k+1}}^{3A2^{-k+2}} \sum_{\substack{Q \in \cB_{k,1}(A,\ve) \\ Q \subseteq R}}\ell(Q)^d \, \frac{dr}{r} \\
&\hspace{-4em}\lesssim \sum_{k = k^*}^\infty  \int_{3A2^{-k+1}}^{3A2^{-k+2}} \sum_{\substack{Q \in \cB_{k,1}(A,\ve) \\ Q \subseteq R}} \cH^d ( \{ x \in Q : (x,r) \in \cB_\text{WLD}(\ve/4^d)\}) \, \frac{dr}{r} \\ 
&\hspace{-4em}\lesssim \int_{Ar_R}^{4Ar_R} \int_{AB_R} \one_{\cB_\text{WLD}(\ve/4^d)}(x,r) \, d\cH^d|_E(x) \frac{dr}{r} \\
&+  \int_0^{Ar_R} \int_{AB_R} \one_{\cB_\text{WLD}(\ve/4^d)}(x,r) \, d\cH^d|_E(x) \frac{dr}{r} \\
&\hspace{-4em}\lesssim_{A,\ve} \ell(R)^d,
\end{align*}
where the second inequality follows from Ahlfors regularity and the final inequality follows from Ahlfors regularity and the fact that $\cB_\text{WLD}(\ve)$ is a Carleson set. 
\end{proof}

\begin{lemma}\label{l:car2}
The set $\cB_2(A,\ve,\rho)$ satisfies a Carleson packing condition for each $A \geq 1$ and $\ve,\rho >0$. 
\end{lemma}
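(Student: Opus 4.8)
The plan is to reduce the Carleson packing condition for $\cB_2(A,\ve,\rho)$ directly to Theorem \ref{t:main}, applied to the compact metric space $X = E \cap AB_R$ (or a slightly enlarged piece of $E$) for a fixed cube $R$. The key observation is that the defining inequality
\[
\cH^d_{\rho r_Q}(E \cap AB_Q) > (1+\ve)(2Ar_Q)^d
\]
is, up to the harmless constant factor $A^d$ and a rescaling of $\rho$, exactly the condition
\[
\frac{\cH^d_{\rho' r_B}(X \cap A'B)}{(2r_B)^d} > 1 + \ve
\]
appearing in the sum in Theorem \ref{t:main}, where $B = B_Q$ (a ball of radius $3\ell(Q)$ centered on $E$). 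So the cubes in $\cB_2(A,\ve,\rho)$ contained in a fixed $R$ correspond, under $Q \mapsto B_Q$, to a subcollection of balls $B(x,2^{-n})$ with $x$ ranging over the Christ-David nets $X_n$, which are maximal $2^{-n}$-separated sets — precisely the setting of Theorem \ref{t:main}.

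Concretely, first fix $R \in \cD^E$ and set $X = E \cap B(x_R, C A \ell(R))$ for a suitable absolute constant $C$ large enough that $AB_Q \subseteq X$ for every $Q \subseteq R$; this $X$ is compact and $C_1$-doubling with $C_1$ depending only on $n$ and $C_0$ (Ahlfors regularity), and $\cH^d(X) \lec_{C_0,A} \ell(R)^d$. Next, observe that the nets $X_n \cap X$ are maximal $2^{-n}$-separated in $X$ up to replacing the radius scale by the fixed factor $3$ coming from $r_Q = 3\ell(Q)$; one absorbs this factor into the dilation parameter $A$ of Theorem \ref{t:main} (taking $A$ there to be $3A$, say) and into $\rho$ (replacing $\rho$ by $\rho/3$). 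Then for each $Q \in \cB_2(A,\ve,\rho)$ with $Q \subseteq R$, writing $n$ for the generation of $Q$ and $B = B(x_Q, 2^{-n})$, we have $\cH^d_{\rho' r_B}(X \cap A' B) / (2r_B)^d > 1 + \ve$ with $\rho', A'$ the adjusted parameters, so such $Q$ contribute balls to the sum in Theorem \ref{t:main}. Since distinct cubes of the same generation have distinct centers in $X_n$, and since $\ell(Q)^d \sim r_B^d$, we get
\[
\sum_{\substack{Q \in \cB_2(A,\ve,\rho) \\ Q \subseteq R}} \ell(Q)^d \lec \sum\left\{ r_B^d : B \in \cB,\ \frac{\cH^d_{\rho' r_B}(X \cap A'B)}{(2r_B)^d} > 1 + \ve \right\} \lec_{C_1, A', \ve, \rho', d} \cH^d(X) \lec_{C_0, A} \ell(R)^d,
\]
which is the desired Carleson packing condition with constant depending only on $n$, $C_0$, $A$, $\ve$, $\rho$, $d$.

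The main technical point to be careful about — rather than a genuine obstacle — is bookkeeping the mismatch between the Christ-David ball radius $r_Q = 3\ell(Q)$ and the dyadic radius $2^{-n} = \ell(Q)$ used in Theorem \ref{t:main}, and making sure the net $X_n$ restricted to the compact piece $X$ is still (comparable to) a maximal separated set there; both are resolved by absorbing fixed multiplicative constants into $A$ and $\rho$, which is legitimate since Theorem \ref{t:main} holds for all $A>0$ and all $\rho \in (0,1)$. One should also note that each $Q \subseteq R$ belongs to at most one generation, so no ball is counted more than once, and that although $\cH^d_{\rho r_Q}$ restricted to $E$ is only subadditive (not a measure), Theorem \ref{t:main} is stated precisely for this content and needs no additivity. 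Finally, to conclude Lemma \ref{l:uniform} one combines Lemmas \ref{l:car1} and \ref{l:car2}: $\cB(A,\ve,\rho) = \cB_1(A,\ve) \cup \cB_2(A,\ve,\rho)$ up to the trivial difference in how the content-deficit condition is phrased, so $\cD^E \setminus \cG(A,\ve,\rho)$ satisfies a Carleson packing condition, and then the uniform-approximation argument sketched before the statement of Lemma \ref{l:uniform} upgrades this to the packing condition for $\cD^E \setminus \cN(\eta)$, which with Proposition \ref{p:Tolsa} yields Theorem \ref{t:WLD}.
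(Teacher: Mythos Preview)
Your proposal is correct and takes exactly the same approach as the paper: the paper's proof is the single sentence ``This is an immediate consequence of Theorem \ref{t:main},'' and what you have written is a careful unpacking of that consequence (localizing to $X = E \cap CAB_R$, matching $B_Q$ with the dyadic balls of Theorem \ref{t:main}, and absorbing the factor $3$ into $A$ and $\rho$). The extra paragraph on combining Lemmas \ref{l:car1} and \ref{l:car2} to finish Lemma \ref{l:uniform} is also in line with the paper's argument.
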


\begin{proof}
This is an immediate consequence of Theorem \ref{t:main}. 
\end{proof}

Notice $\cB(A,\ve,\rho) \subseteq \cB_1(A,\ve) \cup \cB_2(A,\ve,\rho).$ Thus, combining Lemma \ref{l:car1} and Lemma \ref{l:car2}, it follows that $\cB(A,\ve,\rho)$ also satisfies a Carleson packing condition for each $A \geq 1$ and $\ve,\rho >0.$ To finish the proof of Lemma \ref{l:uniform}, it now remains to show this implies a Carleson packing condition on $\cD\setminus\cN(\eta).$

\subsection{Approximation by uniform measures} In this section we prove that, for a suitable choice of $A \geq 1$ and $\ve,\rho >0,$ if $Q \in \cG(A,\ve,\rho),$ then there exists a $d$-uniform measure $\mu_Q$ which well-approximates $Q$. We first consider a related collection of sets. 

\begin{definition}
Let $\mathscr{U}(A,C_0,\ve,\rho)$ be the collection of subsets $E \subseteq \R^n$ which are $C_0$-Ahlfors $d$-regular, contain the origin, and satisfy:
\begin{enumerate}
\item $\cH_{\rho r_\bB}^d(E \cap A\bB) \leq (1+\ve)(2A)^d,$
\item $\cH_{\infty}^d(E \cap B) \geq (2r_B)^d -\ve(2A)^d$ for all $B$ centered on $E \cap A\bB$ with $r_B \leq A.$ 
\end{enumerate}
Here, $\bB$ denotes the unit ball in $\R^n$ centered at the origin.
\end{definition}

Most of the details of the following lemma are contained in the proof of \cite[Lemma III.5.13]{of-and-on}, we include a proof for the reader's convenience. 

\begin{lemma}
Let $\eta >0$ be given. There is $A \geq 1$ such that if $\mu$ is a $C_0$-Ahlfors $d$-regular Radon measure satisfying
\[
\mu(B) = (2r_B)^d
\]
for all $B$ centered on \emph{supp}$\, \mu \cap A\bB$ with $r_B \leq A,$ 
then there is a $d$-uniform measure $\nu$ such that
\[
d_\bB(\mu,\nu) \leq \eta.
\]
\end{lemma}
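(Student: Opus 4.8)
The plan is to argue by compactness and contradiction, exploiting the rigidity of uniform measures established in Sections 1--3 of \cite{Tol15}. Suppose the conclusion fails: then there is $\eta_0 > 0$ and, for every $A = A_j \to \infty$, a $C_0$-Ahlfors $d$-regular measure $\mu_j$ satisfying $\mu_j(B) = (2r_B)^d$ for all $B$ centered on $\supp\mu_j \cap A_j\bB$ with $r_B \le A_j$, yet $d_\bB(\mu_j, \nu) > \eta_0$ for every $d$-uniform measure $\nu$. Since $0 \in \supp\mu_j$ (we may translate so that the supports meet $\bB$, using that $d_\bB$ is only finite when both supports meet $\bB$) and the $\mu_j$ are uniformly Ahlfors regular, the sequence $\{\mu_j\}$ is uniformly bounded on compact sets, so by the standard weak compactness of Radon measures (Lemma~\ref{l:subseqm}, or rather its classical analogue for genuine Radon measures) we may pass to a subsequence with $\mu_j \rightharpoonup \mu_\infty$ for some Radon measure $\mu_\infty$. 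The weak limit $\mu_\infty$ is again $C_0$-Ahlfors $d$-regular with $0 \in \supp\mu_\infty$: upper and lower regularity pass to the limit via Lemma~\ref{l:semicontm} applied to closed balls and their open interiors, together with Ahlfors regularity to control boundary effects.

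The heart of the argument is to show that $\mu_\infty$ is $d$-uniform, i.e. $\mu_\infty(B(x,r)) = (2r)^d$ for all $x \in \supp\mu_\infty$ and all $r > 0$. Fix such $x$ and $r$. For $j$ large, $A_j > 2r$ and, by Lemma~\ref{l:convergingm}, there is $x_j \in \supp\mu_j$ with $x_j \to x$; for $j$ large $x_j \in A_j\bB$ and $r_{B(x_j,r)} = r \le A_j$, so the hypothesis gives $\mu_j(B(x_j,r)) = (2r)^d$ exactly. Now I pass to the limit: by Lemma~\ref{l:semicontm},
\[
\mu_\infty(\overline{B(x,r)}) \ge \limsup_{j} \mu_j(\overline{B(x,r')})
\]
for $r' < r$ slightly smaller (using $B(x_j,r') \subseteq \overline{B(x,r)}$ eventually) and
\[
\mu_\infty(B(x,r)^\circ) \le \liminf_j \mu_j(B(x,r'')^\circ)
\]
for $r'' > r$ slightly larger (using $B(x,r)^\circ \subseteq B(x_j, r'')^\circ$ eventually). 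Letting $r' \uparrow r$ and $r'' \downarrow r$ and invoking $\mu_j(B(x_j,\cdot)) = (2\,\cdot\,)^d$ together with continuity of $t \mapsto (2t)^d$, one sandwiches $\mu_\infty(B(x,r))$ between $(2r)^d$ and $(2r)^d$; the only subtlety is the distinction between open and closed balls, which is handled because $\mu_\infty$ is Ahlfors regular and hence $\mu_\infty(\partial B(x,r)) = 0$ for all but countably many $r$, and then a further limiting argument removes the exceptional $r$. This shows $\mu_\infty$ is $d$-uniform (with constant $c = 2^d$, in fact).

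Finally, the constancy of $d_\bB(\cdot,\cdot)$-distance is not preserved under weak limits in an obvious way, so I must transfer the separation. Taking $\nu = \mu_\infty$ in the contradiction hypothesis would require $d_\bB(\mu_j, \mu_\infty) > \eta_0$ for all $j$; but Lemma~\ref{l:convergingm} applied to the ball $\bB$ (or a slightly larger ball) gives exactly $d_\bB(\mu_j, \mu_\infty) \to 0$, since both one-sided Hausdorff excesses between $\supp\mu_j$ and $\supp\mu_\infty$ inside $\bB$ tend to zero. This contradiction completes the proof, and the required $A$ is obtained by negating the contradiction hypothesis: there is $A$ (depending on $\eta$ and $C_0$) so that every $\mu$ as in the statement is within $\eta$ of some $d$-uniform measure. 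The main obstacle I anticipate is the careful bookkeeping in the open/closed ball and exceptional-radius issues when upgrading the weak convergence to the pointwise identity $\mu_\infty(B(x,r)) = (2r)^d$; everything else is a routine assembly of the lemmas already quoted, most of which is, as the authors note, already in the proof of \cite[Lemma III.5.13]{of-and-on}.
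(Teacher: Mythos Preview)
Your proposal is correct and follows essentially the same approach as the paper: argue by contradiction, extract a weakly convergent subsequence $\mu_j \rightharpoonup \mu_\infty$, show $\mu_\infty$ is $d$-uniform by approximating centers $x$ by $x_j \in \supp\mu_j$ and sandwiching $\mu_\infty(B(x,r))$ via Lemma~\ref{l:semicontm}, then contradict via Lemma~\ref{l:convergingm}. The only difference is cosmetic: the paper handles the open/closed-ball issue by enlarging to $(1+\varepsilon)B^\circ$ and shrinking to $B(x_j, r_B - \delta_j)$ rather than invoking exceptional radii, which is slightly cleaner but amounts to the same thing.
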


\begin{proof}
Suppose the statement is false. We can find a sequence of real numbers $A_j \rightarrow \infty$ and $C_0$-Ahlfors $d$-regular Radon measures $\mu_j$ such that $\mu_j(B) = (2r_B)^d$ for all $B$ centered on $\text{supp}\,\mu_j \cap A_j\bB$ with $r_B \leq A_j$, but $d_\bB(\mu_j,\nu) > \eta$ for all $d$-uniform measures $\nu.$ By extracting a subsequence if necessary, we can assume $\mu_j \rightharpoonup \mu,$ where $\mu$ is a Radon measure. This is possible by \cite[Lemma 1.23]{Mattila} since the $\mu_j$ are $C_0$-Ahlfors $d$-regular. 

We claim $\mu$ is $d$-uniform. Let $B = B(x_B,r_B)$ be centered on $\text{supp}\,\mu.$ For each $j$, let 
\[ \delta_j = \sup_{p \in B \cap \supp \mu} \dist(p,\supp \mu_j) \] 
so that there exists $x_j \in \supp \mu_j$ satisfying $|x_B - x_j| \leq \delta_j.$ Let $B_j = B(x_j,r_B + \delta_j)$. Clearly $B \subseteq B_j$, and $r_{B_j} \rightarrow r_B$ by Lemma \ref{l:convergingm}. Let $\ve > 0$ be small. For $j$ large enough $(1+\ve)r_{B_j} \leq A_j,$ so by Lemma \ref{l:semicontm}, 
\begin{align*}
\mu(B) &\leq \mu((1+\ve)B^\circ) \leq \liminf_{j \rightarrow \infty} \mu_j((1+\ve)B^\circ) \leq \liminf_{j \rightarrow \infty} \mu_j((1+\ve)B_j) \\
&= \liminf_{j \rightarrow \infty} (2(1+\ve)r_{B_j})^d = (2(1+\ve)r_B)^d. 
\end{align*}
Since $\ve > 0$ was arbitrary, we conclude that 
\[ \mu(B) \leq (2r_B)^d. \] 
Similarly, let $B^\prime_j = B(x_j,r_B - \delta_j).$ Then, $B_j^\prime \subseteq B,$ $r_{B_j'} \rightarrow r_B$, and $r_{B_j'} \leq A_j$ for $j$ large enough. Hence 
\[
\mu(B) \geq \limsup_{j \rightarrow \infty} \mu_j(B) \geq \limsup_{j \rightarrow \infty} \mu_j(B^\prime_j) \geq \limsup_{j \rightarrow \infty} (2r_{B^\prime_j})^d = (2r_B)^d.
\]
For all $j$ large enough $d_\bB(\mu_j,\mu) \leq \eta$, by Lemma \ref{l:convergingm}. This contradicts the assumptions on the $\mu_j$ since $\mu$ is $d$-uniform.
\end{proof}

\begin{lemma}
Let $A \geq 1$ and $\eta>0$ be given. There exists $\ve, \rho >0$ so that if $E \in \mathscr{U}(2A,C_0,\ve,\rho)$ then there is a Radon measure $\mu$ such that 
\[
d_\bB(\cH^d|_E,\mu) \leq \eta
\]
and
\[
\mu(B) = (2r_B)^d
\]
for all $B$ centered on \emph{supp}$\, \mu \cap A\bB$ such that $r_B \leq A.$ 
\end{lemma}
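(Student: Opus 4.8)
The plan is to prove this by a compactness argument, entirely parallel to the previous lemma but now producing a measure from sets rather than from measures. Suppose the statement fails. Then there are sequences $\ve_j, \rho_j \to 0$ and sets $E_j \in \mathscr{U}(2A, C_0, \ve_j, \rho_j)$ so that $d_\bB(\cH^d|_{E_j}, \mu) > \eta$ for every Radon measure $\mu$ satisfying the stated normalization, or more precisely so that no such $\mu$ is $\eta$-close to $\cH^d|_{E_j}$. Actually it is cleaner to work with the truncated contents: set $\mu_j = \cH^d_{\rho_j}|_{E_j}$. By Lemma \ref{l:subseqm}, after passing to a subsequence, $\mu_j \rightharpoonup \mu$ for some Radon measure $\mu$. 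Since each $E_j$ contains the origin and is $C_0$-Ahlfors regular, Lemma \ref{l:convergingm} applies and $\supp \mu_j \to \supp \mu$ locally in the Hausdorff metric; since $\supp \mu_j = E_j$, this controls $d_\bB(\cH^d|_{E_j}, \mu)$ once we know how $\supp\mu$ sits relative to $E_j$.

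First I would record that the weak limit $\mu$ satisfies $\mu(B) = (2r_B)^d$ for every ball $B$ centered on $\supp\mu \cap A\bB$ with $r_B \le A$. The upper bound comes from property (1): for such a $B$, and $j$ large, a slightly enlarged ball $(1+\delta)B$ centered at a nearby point of $E_j$ is contained in $2A\bB$ with radius at most $2A$, so property (1) together with the subadditivity/monotonicity of $\cH^d_{\rho_j}$ and the fact that $\cH^d_{\rho_j}|_{E_j}((1+\delta)B_j) \le \cH^d_{\rho_j}|_{E_j}(2A\bB) \le (1+\ve_j)(4A)^d$ — wait, that is too crude; instead one covers $(1+\delta)B_j$ by balls and uses that property (1) bounds the content on $2A\bB$, but to get the \emph{local} bound $\mu(B) \le (2r_B)^d$ I should instead argue as in the previous lemma, combining the upper Ahlfors regularity of $\mu$ (inherited in the limit) with property (2). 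The lower bound $\mu(B) \ge (2r_B)^d$ is where property (2) is essential: for $B$ centered on $\supp\mu$, pick $x_j \in E_j = \supp\mu_j$ converging to the center; property (2) gives $\cH^d_\infty(E_j \cap B(x_j, r)) \ge (2r)^d - \ve_j(4A)^d$ for $r$ slightly less than $r_B$, and since $\rho_j \to 0$ eventually $\cH^d_{\rho_j}(E_j \cap B(x_j,r)) $ is comparable to $\cH^d_\infty$ on that fixed-size ball — more carefully, $\cH^d_{\rho_j} \ge \cH^d_\infty$ always, so $\mu_j(B(x_j,r)) \ge \cH^d_\infty(E_j \cap B(x_j,r)) \ge (2r)^d - \ve_j(4A)^d$, and then Lemma \ref{l:semicontm} on the open ball gives $\mu(B) \ge \liminf \mu_j(B(x_j,r)^\circ) \ge (2r)^d$, letting $r \uparrow r_B$. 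Combining with the upper bound, $\mu(B) = (2r_B)^d$.

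Having established the normalization of $\mu$, the conclusion $d_\bB(\cH^d|_{E_j}, \mu) \le \eta$ for large $j$ follows from Lemma \ref{l:convergingm}: we need the two one-sided Hausdorff-distance quantities between $\supp \mu = \supp\mu$ and $E_j = \supp\mu_j$ restricted to $\bB$ to go to zero, which is exactly the content of that lemma since the $\mu_j$ are uniformly $C_0$-Ahlfors $d$-regular and $\supp(\cH^d|_{E_j}) = E_j$. This contradicts the standing assumption that $d_\bB(\cH^d|_{E_j},\mu) > \eta$, and the proof is complete.

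The main obstacle I anticipate is the bookkeeping in the lower-bound step: one must be careful that property (2), which is stated with the normalization $(2r_B)^d - \ve(2\cdot 2A)^d$ for balls of radius up to $2A$ (since $E_j \in \mathscr{U}(2A, C_0,\ve_j,\rho_j)$), really does survive passing to the weak limit for a \emph{fixed} ball $B$ centered on $\supp\mu$ with $r_B \le A$, and that the errors $\ve_j(4A)^d$ genuinely vanish rather than being absorbed at the wrong scale. A secondary subtlety is verifying the upper bound $\mu(B)\le(2r_B)^d$ locally — property (1) is only a single global inequality on $2A\bB$, so the upper bound must come from upper Ahlfors $d$-regularity of $\mu$ (which is inherited since each $\mu_j$ is $C_0$-regular and one checks the limit is upper $d$-regular) combined with property (2), exactly as in the preceding lemma, rather than from property (1) directly; this is why the hypothesis is phrased with $2A$ and the conclusion with $A$, giving room to enlarge balls.
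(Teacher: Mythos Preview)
Your overall architecture is correct and matches the paper: contradict, pass to the weak limit of $\mu_j=\cH^d_{\rho_j}|_{E_j}$ via Lemma~\ref{l:subseqm}, verify $\mu(B)=(2r_B)^d$ on balls centred in $\supp\mu\cap A\bB$ with $r_B\le A$, and then invoke Lemma~\ref{l:convergingm} to get $d_\bB(\cH^d|_{E_j},\mu)\le\eta$ for large $j$. Your lower bound $\mu(B)\ge(2r_B)^d$ via property~(2) and $\cH^d_{\rho_j}\ge\cH^d_\infty$ is exactly right.

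The gap is the upper bound $\mu(B)\le(2r_B)^d$. You say this should follow ``exactly as in the preceding lemma'' from upper Ahlfors regularity combined with property~(2). It does not. In the preceding lemma the hypothesis on the $\mu_j$ was the \emph{exact} identity $\mu_j(B)=(2r_B)^d$, so the upper bound on the limit came directly from $\mu(B)\le\liminf_j\mu_j((1+\delta)B_j)=(2(1+\delta)r_B)^d$. Here you have no such pointwise identity for $\mu_j$: property~(1) is a single global bound on $2A\bB$, and property~(2) is a \emph{lower} bound. Upper Ahlfors regularity only delivers $\mu(B)\le C_0 r_B^d$, with the wrong constant.

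The paper closes this gap with a genuinely new idea you have not found: a Vitali packing argument. Assuming $\mu(B)=(2r_B)^d+\tau$ with $\tau>0$, one covers $\supp\mu\cap 2A\bB^\circ\setminus B$ by a disjoint Vitali family $\{B'\}$, uses the already-established \emph{lower} bound $\mu(B')\ge(2r_{B'})^d$ on each piece, and sums:
\[
(4A)^d \;\ge\; \mu(2A\bB^\circ) \;\ge\; \mu(B)+\sum_{B'}\mu(B') \;\ge\; \tau+(2r_B)^d+\sum_{B'}(2r_{B'})^d \;\ge\; \tau+\cH^d_\infty(\supp\mu\cap 2A\bB^\circ).
\]
The first inequality is property~(1) passed to the limit; the last quantity is then bounded below by $(4A\alpha)^d$ for $\alpha<1$ using upper semicontinuity of $\cH^d_\infty$ and property~(2) applied to the $E_j$. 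Letting $\alpha\to1$ forces $\tau\le0$. So property~(1), which you dismissed as ``too crude'', is precisely what pins down the upper bound --- but only in combination with the lower bound on \emph{all} balls via this packing step.
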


\begin{proof}
Suppose the lemma is false. Then, there exists a sequence of sets $E_j$ and real numbers $\ve_j,\rho_j \rightarrow 0$ such that $E_j \in \mathscr{U}(2A,C_0,\ve_j,\rho_j)$ but the conclusion of the above lemma is false for each $j$. Let $\mu_j =\cH^d_{\rho_j}|_{E_j}.$ By Lemma \ref{l:subseqm}, we can extract a subsequence (which we do not relabel) such that $\mu_j \rightharpoonup \mu$ where $\mu$ is a Radon measure. Note that
\[
\mu(2A\bB^\circ) \leq \liminf_{j \rightarrow \infty} \mu_j (2A\bB^\circ) \leq \liminf_{j \rightarrow \infty} (1+\ve_j)(4A)^d = (4A)^d.
\]
Let $B$ be a ball centered on $\text{supp} \, \mu \cap A\bB$ with $r_B \leq A.$ As in the proof of the previous lemma, for each $j$ we can find balls $B_j$ centered on $\text{supp} \, \mu_j \cap A\bB$ such that $B_j \subseteq B$ and $r_{B_j} \rightarrow r_B.$ Then  
\begin{align*}
\mu(B) &\geq \limsup_{j \rightarrow \infty} \mu_j(B) \geq \limsup_{j \rightarrow \infty} \mu_j(B_j) \geq \limsup_{j \rightarrow \infty} \cH^d_\infty|_{E_j}(B_j) \\
&\geq \limsup_{j \rightarrow \infty} \left((2r_{B_j})^d-\ve_j (4A)^d\right) = (2r_B)^d. 
\end{align*}
We claim, in fact, $\mu(B) = (2r_B)^d.$ Assume $\mu(B) > (2r_B)^d$. For each $x \in \supp \mu \cap 2A\bB^\circ,$ let
\[ r_x = \sup \{r : B(x,r) \subseteq 2A\bB^\circ \ \text{and} \ B(x,r) \cap B =\emptyset \}. \]
Then, let 
\[\cB_x = \{B(x,r) : 0 < r < r_x \} \quad \text{and} \quad \cB' = \bigcup_{x \in \supp \mu \cap 2A\bB^o} \cB_x.\]
Notice each ball $B' \in \cB'$ is contained in $2A\bB^\circ$ and has empty intersection with $B.$ By the Vitali Covering Theorem (\cite[Theorem 2.8]{Mattila}), we may find a disjoint collection of balls $\cB \subseteq \cB'$ such that
\[ \cH^d|_{\supp \mu}\left( 2A\bB^\circ \setminus  \left( B \cup \bigcup_{B' \in \cB} B' \right) \right) = 0,\]
in particular
\[ \cH^d_\infty\left( \supp \mu \cap 2A\bB^\circ \setminus  \left( B \cup \bigcup_{B' \in \cB} B' \right) \right) = 0,\]
Let $\tau > 0$ be so that $\mu(B) = \tau + (2r_B)^d.$ By \cite[Theorem 2.1]{MM97}, $\cH^d_\infty$ is an upper semicontinuous function when acting on compact subsets of a compact metric space equipped with the Hausdorff norm. Using this, with the fact that the balls in $\cB$ are pairwise disjoint and have empty intersection with $B$, for any $0 < \alpha < 1$ we get
\begin{align*}
(4A)^d &\geq \mu(2A\bB^\circ)  \geq \mu(B) + \sum_{B^\prime \in \cB} \mu(B^\prime) \geq \tau + (2r_B)^d + \sum_{B^\prime \in \cB} (2r_{B^\prime})^d \\
&\geq \tau + \cH^d_\infty\left( \supp \mu \cap \left( B \cup \bigcup_{B' \in \cB'} B'\right
)\right) \\
&\geq \tau +  \cH^d_\infty (\supp \mu \cap 2A\bB^\circ) \\
&\hspace{4em} - \cH^d_\infty\left( \supp \mu \cap 2A\bB^\circ \setminus \left( B \cup \bigcup_{B' \in \cB} B' \right) \right) \\
&\geq \tau + \cH^d_\infty(\supp \mu \cap 2A\alpha\bB) \\
&\geq \tau + \limsup_{j \rightarrow \infty} \cH^d_\infty(E_j \cap 2A\alpha\bB) \geq \tau +  (4A\alpha)^d.
\end{align*}
Taking $\alpha \rightarrow 1$ gives $(4A)^d \geq \tau +  (4A)^d$ which is a contradiction and proves the claim.

We finish the proof of the lemma by noting that for $j$ large enough,
\[
d_\bB (\cH^d|_{E_j}, \mu) =d_\bB( \mu_j, \mu) \leq \eta
\]
by Lemma \ref{l:convergingm}, which is a contradiction. 
\end{proof}

Combining the above two lemmas, for $\eta > 0$ we can find $A \geq 1$ and $\ve,\rho > 0$ (depending on $\eta$) so that for any $E \in \mathscr{U}(2A,C_0,\ve,\rho),$ there exists a $d$-uniform measure $\mu$ satisfying $d_{\bB}(\cH^d|_E,  \mu) \leq \eta.$ By re-scaling and translation, we have the following.

\begin{lemma}\label{l:subset}
Let $\eta >0.$ There exist $A \geq 1$ and $\ve, \rho >0$ so that for any $Q \in \cG(A,\ve,\rho),$ there is a $d$-uniform measure $\mu$ such that 
\[
d_{B_Q}(\cH^d|_E, \mu) \leq \eta.
\]
\end{lemma}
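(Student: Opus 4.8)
The plan is to deduce Lemma \ref{l:subset} from the preceding two lemmas by a standard rescaling-and-translation argument, so the only real work is bookkeeping. Fix $\eta>0$. First I would invoke the second of the two lemmas above with this $\eta$ and a parameter $A$ to be chosen (it produces an intermediate measure), and then feed the resulting $A$ into the first lemma to get the final threshold; composing the two gives constants $A\ge 1$ and $\ve,\rho>0$, depending only on $\eta$ (and $C_0$), such that any $E\in\mathscr{U}(2A,C_0,\ve,\rho)$ carries a $d$-uniform measure $\mu$ with $d_\bB(\cH^d|_E,\mu)\le\eta$. This is exactly the ``Combining the above two lemmas'' sentence, so it may be quoted directly.

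Next I would unwind the definitions to see that a cube $Q\in\cG(A,\ve,\rho)$ is, after the affine map $T_Q(z)=(z-x_Q)/r_Q$, essentially an element of $\mathscr{U}(2A,C_0,\ve',\rho')$ for comparable parameters. Indeed, writing $E_Q=T_Q(E)$, condition $\cH^d_{\rho r_Q}(E\cap AB_Q)\le(1+\ve)(2Ar_Q)^d$ becomes $\cH^d_{\rho}(E_Q\cap A\bB)\le(1+\ve)(2A)^d$ by the scaling $\cH^d_{\delta}(T_Q(S))=r_Q^{-d}\cH^d_{r_Q\delta}(S)$, and the content lower bound $\cH^d_\infty(E\cap B(x,r))\ge(2r)^d-\ve(2Ar_Q)^d$ for $x\in E\cap AB_Q$, $0<r<Ar_Q$ transforms into $\cH^d_\infty(E_Q\cap B)\ge(2r_B)^d-\ve(2A)^d$ for all $B$ centered on $E_Q\cap A\bB$ with $r_B\le A$; and $E_Q$ is still $C_0$-Ahlfors $d$-regular and contains the origin since $x_Q\in Q\subseteq E$. (One must be slightly careful that the definition of $\mathscr{U}$ uses radius $A$ for the covering ball and $A$ for the content balls while $\cG$ uses $AB_Q$ throughout; this is why I apply the combined lemma at level $2A$ rather than $A$, leaving room to absorb the factor $r_Q$ versus $\ell(Q)$ discrepancy in $r_Q=3\ell(Q)$ — concretely, choose the parameters in $\cG$ to be a fixed multiple of those coming out of the $\mathscr{U}$-statement so that $E_Q\in\mathscr{U}(2A,C_0,\ve,\rho)$.)

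Then, applying the combined lemma to $E_Q$ produces a $d$-uniform measure $\tilde\mu$ with $d_\bB(\cH^d|_{E_Q},\tilde\mu)\le\eta$, and pushing it back by $T_Q^{-1}$ gives a measure $\mu=r_Q^d\,(T_Q^{-1})_\#\tilde\mu$ which is again $d$-uniform (the class of $d$-uniform measures is invariant under dilations and translations, up to the multiplicative constant $c$ in $\mu(B(x,r))=cr^d$), and since $d_B(\cdot,\cdot)$ is a scale-invariant quantity depending only on the supports, $d_{B_Q}(\cH^d|_E,\mu)=d_\bB(\cH^d|_{E_Q},\tilde\mu)\le\eta$ using $B_Q=T_Q^{-1}(\bB)$. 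Note that $d_B$ as defined compares supports only, so the normalizing constant $r_Q^d$ is irrelevant to the estimate; one uses $\supp((T_Q^{-1})_\#\tilde\mu)=T_Q^{-1}(\supp\tilde\mu)$ and $\supp(\cH^d|_{E_Q})=E_Q=T_Q(E)=T_Q(\supp\cH^d|_E)$. This yields the statement.

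The argument is essentially bookkeeping, so the ``main obstacle'' is really just making sure the parameters line up: the radius-$A$ versus radius-$2A$ gap in the hypotheses, the factor $r_Q=3\ell(Q)$, and the fact that in $\cG$ the content inequality is stated with the slightly different normalization $(2r)^d-\ve(2Ar_Q)^d$ on balls of radius up to $Ar_Q$ rather than $r_B\le A$ after rescaling. None of these is deep — one simply picks the constants generated by the two lemmas and then shrinks $\ve,\rho$ (and enlarges $A$) by fixed dimensional factors to guarantee the rescaled $E_Q$ genuinely lies in $\mathscr{U}(2A,C_0,\ve,\rho)$ — but it is the only place where care is needed.
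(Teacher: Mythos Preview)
Your proposal is correct and follows exactly the route the paper takes: the paper's proof is literally the sentence ``Combining the above two lemmas \ldots\ By re-scaling and translation, we have the following,'' and you have simply unpacked what that rescaling $T_Q(z)=(z-x_Q)/r_Q$ does to the defining inequalities of $\cG$ and $\mathscr{U}$ and to the quantity $d_B$. One small remark: your explanation of why the combined lemma is stated at level $2A$ is slightly off --- it has nothing to do with the $r_Q=3\ell(Q)$ normalization (the map $T_Q$ sends $B_Q$ exactly to $\bB$, so no discrepancy arises there), but rather with how the two preceding lemmas compose (the second produces a measure good on $A\bB$ from data on $2A\bB$, and the first then passes from $A\bB$ to $\bB$). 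This does not affect the correctness of your argument; one simply takes the $A$ in Lemma~\ref{l:subset} to be the $2A$ output by the combination.
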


\begin{proof}[Proof of Lemma \ref{l:uniform}]

Let $\eta >0.$ By Lemma \ref{l:subset}, we can find $A \geq 1$ and $\ve,\rho>0$ dependent on $\eta$ such that $\cG(A,\ve,\rho) \subseteq \mathscr{N}(\eta).$ Hence $\cD\setminus \mathscr{N}(\eta) \subseteq \cB(A,\ve,\rho).$ The proof of Lemma \ref{l:uniform} is completed by noting that 
\[
\sum_{\substack{Q \in \cD \setminus \mathscr{N}(\eta) \\ Q \subseteq R}} \ell(Q)^d \leq \sum_{\substack{Q \in \cB(A,\ve,\rho) \\ Q \subseteq R}} \ell(Q)^d \lesssim_\eta \ell(R)^d,
\]
where the last inequality follows since $\cB(A,\ve,\rho)$ is a Carleson set. 
\end{proof}

\section{Proof of Theorem \ref{t:main}}

\subsection{Notation and conventions}

In the sections below, $X$ will denote a $C$-doubling metric space. By the Kuratowski embedding theorem, $X$ isometrically embeds into $\ell^{\infty}(X)$, so without loss of generality, we will assume $X$ is a subset of some Banach space $\mathscr{X}$. Thus, whenever we talk about a ball $B(x,r)$, we mean the closed ball centered at $x$ of radius $r$ {\it with respect to $\mathscr{X}$}. In this way, the ball in $X$ is just $B_{X}(x,r)=X\cap B(x,r)$. The diameter of a set is defined in the usual way, but note that, while for a metric space $X$ we could have $\diam B_{X}(x,r)=0$, we always have $\diam B(x,r)=2r$.

We will also denote
\[
\mu = \cH^{s}|_{X}.
\]

\subsection{Cubes}

Before embarking on the proof of Theorem \ref{t:main}, we need to recall Schul's cubes \cite{Sch07-TST}. These are a family of subsets of $X$, that have properties similar to dyadic cubes in Euclidean space. These are similar to the so-called Christ-David Cubes (\cite{Dav88,Chr90}) in some respects. Both collections have the property that, much like dyadic cubes in Euclidean space, they can be divided into different generations and the cubes from each scale partition the cubes from previous generations. The main differences are that the Christ-David cubes and dyadic cubes are partitioned by cubes at the next generation of roughly the same size, while the children of Schul's cubes can vary wildly. Moreover, the Christ-David construction can be modified slightly to exactly partition a doubling space $X$, whereas Schul's cubes may not. The important property they do have, however, is that they are approximately like balls.

Fix $M,K>0$ and $c\in (0,\frac{1}{8})$. For each integer $n\geq 0$, let  $X_{n}\subseteq X$ be a  sequence of maximal $KM^{-n}$-nets in $X$. Let
\[\cB_{n}=\{B(x,KM^{-n}): x\in X_{n}\}, \;\; \cB=\bigcup_{n} \cB_{n}.\]
For $B=B(x,KM^{-n})\in \cB_{n}$, define
\[Q_{B}^{0}=cB, \;\; Q_{B}^{j}=Q_{B}^{j-1}\cup\bigcup\{cB: B\in \bigcup_{m\geq n} \cB_{m}, cB\cap Q_{B}^{j-1}\neq\emptyset\},\]
and
\[
Q_{B}=\bigcup_{j=0}^{\infty} Q_{B}^{j}.
\]
Basically, $Q_{B}$ is the union of all balls $B'$ that may be connected to $B$ by a chain $\{cB_{j}\}$ with $B_{j}\in \cB$, $\diam B_{j}\leq \diam B$, and $cB_{j}\cap cB_{j+1}\neq \emptyset$ for all $j$. 

For such a cube $Q$ constructed from $B(x,KM^{-n})$, we let $x_{Q}=x$ and $B_{Q}=B(x,cKM^{-n})$. 

Let 
\[\Delta_{n}=\{Q_{B}:B\in \cB_{n}\},  \;\; \Delta=\bigcup \Delta_{n}.\] 
Note that, for $Q\in \Delta_{n}$, $x_{Q}\in X_{n}$.

\begin{lemma}
If $c\in (0,\frac{1}{8})$, then for $X$ and $\Delta$ as above, the family of cubes $\Delta$ satisfy the following properties. 
\begin{enumerate}
\item If $Q,R\in \Delta$ and $Q\cap R\neq\emptyset$, then $Q\subseteq R$ or $R\subseteq Q$.
\item For $Q\in \Delta$, 
\begin{equation}
B_{Q}\subseteq Q\subseteq (1+8M^{-1})B_{Q}.
\label{e:1+2N^-1}
\end{equation}
\end{enumerate}
\label{l:cubes}
\end{lemma}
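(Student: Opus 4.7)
For part (1) I would argue by chain concatenation. Suppose without loss of generality that $Q = Q_B \in \Delta_n$ and $R = Q_{B'} \in \Delta_{n'}$ with $n' \leq n$, and pick $z \in Q \cap R$. From the construction of $Q$ there is a ball $cB_z$ with $B_z \in \cB_{m_z}$, $m_z \geq n$, containing $z$ and reachable from $cB$ by a chain of balls at generations $\geq n$; similarly $z \in cB_z'$ for some $B_z' \in \cB_{m_z'}$, $m_z' \geq n'$, reachable from $cB'$ at generations $\geq n'$. For an arbitrary $y \in Q$ take a chain from $cB$ to some $cB_y \ni y$, again at generations $\geq n$. Concatenating the $R$-chain from $cB'$ to $cB_z'$, the step $cB_z' \to cB_z$ (they overlap at $z$), the reversed $Q$-chain from $cB_z$ to $cB$, and the forward $Q$-chain from $cB$ to $cB_y$ produces a chain from $cB'$ to $cB_y$ entirely at generations $\geq n'$ (since $n \geq n'$), so $y \in R$ and $Q \subseteq R$.

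For part (2), the inclusion $B_Q \subseteq Q$ is immediate from $B_Q = cB = Q_B^0 \subseteq Q$. For the outer inclusion, set $x = x_B$ and $r = cKM^{-n}$, take $y \in Q$ lying in some $cA_j$ at the end of a chain $cB = cA_0, cA_1, \ldots, cA_j$ with $A_i \in \cB_{m_i}$ and $m_i \geq n$. Two consequences of the hypothesis $c < \tfrac{1}{8}$ together with the $KM^{-m}$-separation of $X_m$ drive the estimate. First, distinct balls $cA, cA'$ with $A, A' \in \cB_m$ are disjoint, since $|x_A - x_{A'}| \geq KM^{-m} > 2cKM^{-m}$; consecutive balls of the chain must therefore lie at different scales, and balls of any fixed scale appearing in the chain are pairwise disjoint. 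Second, the only scale-$n$ ball that can appear in any chain is $B$ itself: bridging any other $A \in \cB_n$ back to $cB$ through intermediates at scales $\geq n+1$ would require a detour spanning $\geq (1-c)KM^{-n}$, which a careful count using $c < \tfrac{1}{8}$ and the disjointness above rules out. With these observations, every chain starts at $cB$ and otherwise visits only scales $m \geq n+1$, so $|x_{A_j} - x| \leq r + 2cK \sum_{i \geq 1} M^{-m_i}$. The $C$-doubling hypothesis bounds the number of scale-$m$ balls that can appear by a constant depending only on $C$ times the standard doubling count in a region comparable to $B_Q$, and summing the resulting geometric series in $M^{-1}$ over $m \geq n+1$ yields a total displacement of at most $8cKM^{-n-1}$. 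Hence $y \in B(x, cKM^{-n}(1 + 8M^{-1})) = (1 + 8M^{-1})B_Q$.

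\textbf{Main obstacle.} The delicate step is the geometric bookkeeping in (2): a chain may cascade through many finer scales, and the threshold $c < \tfrac{1}{8}$ is precisely what both forbids the chain from revisiting scale $n$ beyond $B$ and keeps the tail sum $\sum_{m \geq n+1} M^{-m}$ within the $8M^{-1}$ budget. The $C$-doubling hypothesis is essential here to bound the number of scale-$m$ balls that can contribute; disjointness at each scale by itself would not be enough to make the series sum cleanly to the claimed constant.
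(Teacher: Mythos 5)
Your argument for part (1) is correct and is the standard chain-concatenation proof; no concerns there.

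Part (2) has a genuine gap, and it lies exactly where you flag it as the main obstacle. Both of the steps that you say require a ``careful count'' are circular, and they also yield the wrong type of constant. To see the circularity: you invoke the $C$-doubling hypothesis to bound the number of scale-$m$ balls appearing ``in a region comparable to $B_Q$'' --- but knowing that the chain stays inside a region comparable to $B_Q$ is precisely what you are trying to establish. An a priori chain can wander arbitrarily far through finer scales (each step contributes up to $2cKM^{-m}$ and the number of steps is unbounded), and nothing in the disjointness-at-each-scale observation alone prevents a chain from eventually reaching a second scale-$n$ ball. The same circularity infects your claim that such a detour ``is ruled out.'' Moreover, even granting the circular step, counting via doubling gives a factor $N(C)$ balls per scale, so the series sums to $\lesssim_C cKM^{-n-1}$, not the stated $8cKM^{-n-1}$; but the constant $8M^{-1}$ in the lemma is independent of $C$, and indeed the cited proofs in \cite{Sch07-TST} and \cite{Azz15} make no use of doubling whatsoever. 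The paper's value of $8$ is also used arithmetically later (the choice of $J$ via \eqref{e:Jrho} is tuned so that $(1+8M^{-1})^{-s}\geq 1-\ve/2$), so a $C$-dependent constant would propagate.

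The correct, and $C$-free, mechanism is a bootstrap in the number of scales involved. Fix $N\geq n$ and consider the truncated cube $Q_B^{(\leq N)}$ built using only scales $n,\dots,N$. Induct on $N-n$. The base case $N=n$ is $Q_B^{(\leq n)}=cB$. For the inductive step, one first observes that any chain leaving $cB$ must immediately pass to a ball at scale $\geq n+1$ (two distinct $c$-shrunken scale-$n$ balls are disjoint since $c<\tfrac12$), so deleting $cB$ from $Q_B^{(\leq N)}$ leaves a union of connected components $T_i$ built from scales $n+1,\dots,N$, each of which touches $cB$. Each $T_i$ is itself $Q_{A^*_i}^{(\leq N)}$ for some ball $A^*_i$ at the minimal scale $m_i^*\geq n+1$ of $T_i$, and the inductive hypothesis gives $T_i\subseteq B\bigl(x_{A^*_i},cKM^{-m_i^*}(1+8M^{-1})\bigr)$. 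Since $T_i$ meets $cB$ this ball meets $B(x_B,cKM^{-n})$, so any $y\in T_i$ satisfies
\[
|y-x_B|\leq cKM^{-n}+2cKM^{-m_i^*}(1+8M^{-1})\leq cKM^{-n}\bigl(1+2M^{-1}(1+8M^{-1})\bigr),
\]
and the inequality $2M^{-1}(1+8M^{-1})\leq 8M^{-1}$ (for $M$ not too small) closes the induction and gives the $(1+8M^{-1})$ factor. The same inclusion, combined with $c<\tfrac18$, then shows that no other scale-$n$ ball can intersect $Q_B^{(\leq N)}$, proving the claim you left as a ``careful count.'' Letting $N\to\infty$ yields the full statement. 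This is the argument that actually proves the lemma; no doubling count is needed, and the constant $8$ emerges from the self-similar telescoping, not from the ambient geometry of $X$.
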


In other words, for $M$ large, our cubes don't differ much from balls. 

This version is a slight modification of a similar result in \cite[Theorem 3.19]{Sch07-TST} and is proven in \cite[Lemma 2.1]{Azz15}. There it is assumed that the $X_n$ are nested maximal $M^{-n}$-nets, but this is not necessary in the proof. In both papers it is also assumed that $K=1$, but the result above follows by just applying these results to a scaled copy of $X$.

\subsection{Now the proof}

The rest of this section is devoted to the proof of Theorem \ref{t:main}. Let $A\geq 1$ and let $X$ be a $C$-doubling metric space such that $\mu(X) = \cH^{s}(X)<\infty$. We will assume without loss of generality that $\diam X=1$.  Let $X_{n}$ be a sequence of $2^{-n}$-separated points in $X$, that is, where $|x-y|\geq 2^{-n}$ for all $x,y\in X$. Let 
\[
\cB_{n}=\{B(x,2^{-n}):x\in X_{n}\},\;\; \cB=\bigcup_{n\geq 0} \cB_{n}. 
\]

We would like to use Schul's cubes in such a way that each cube $Q$ corresponds to a dilated ball $AB$ for some $B\in \cB$. The issue here is that we constructed those cubes from contractions of balls and not enargements, i.e. using balls of the form $cB$ where $c\ll 1$, not balls $AB$ with $A>1$. What we do is split up the collections of balls into separate familes that are separated enough so that, if we consider balls $B'$ from one such family, then $cB'=AB$ for some $B$ in our original collection (this is the {\it thinning} process done in \cite[Section 3.3.1]{Sch07-TST}).

Let $a\in \bN$ be so that 
\begin{equation}
\label{e:A<2^a}
2^{a-1}\leq A<2^{a}.
\end{equation}
Since $X$ is doubling, one can find $N=N(A,C)$ and subsets $X_{n}^{1},...,X_{n}^{N}$ in $X_{n}$ that are  maximally $2^{-n+a+4}$-separated in $X_{n}$ and so that 
\[
X_{n}=\bigcup_{i=1}^{N} X_{n}^{i}.
\]
Let $J\in \bN$ be such that 
\begin{equation}
\label{e:Jrho}
2^{-J}<\min\left\{\rho,\frac{\ve}{16s}\right\}<2^{-J+1}
\end{equation}
Let $X_{n}^{i,j}=X_{nJ+j}^{i}$.  
For $i=1,,...,N$, $j=1,...J-1$, let $\Delta_{n}^{i,j}$ and $\Delta^{i,j}=\bigcup_{n} \Delta_{n}^{i,j}$ be those cubes constructed in the previous section for the sequence $(X_{n}^{i,j})_{n}$ of $2^{-nJ-j+a+4}$-separated points with $K=2^{-j+a+4}$, $M=2^{J}$, and
\[
c=A2^{-4-a}<2^{-4}<1/8,
\]
so that if
\[
\cB_{n}^{i,j}=\{B(x,2^{-nJ-j+a+4}):x\in X_{n}^{i,j}\}, \;\; \cB^{i,j}=\bigcup_{n\geq 0} \cB_{n}^{i,j},\]
and if 
\[B=B(x,2^{-nJ-j+a+4})=B(x,KM^{-n})\in \cB_{n},\] 
then
\[
cB=B(x,A2^{-nJ+j}). 
\]

So we have for $j=1,...,J-1$,
\[
A\cB_{nJ+j}
:= \{B(x,A2^{-n}):x\in X_{n}\}
=\bigcup_{i} c\cB_{n}^{i,j}=\bigcup_{i}\{cB:B\in \cB_{n}^{i,j}\}.
\]
and thus
\[
A\cB=\bigcup_{n,i,j} c \cB_{n}^{i,j}.
\]

Fix some $i$ and $j$. 

\begin{lemma}
For $\mu$-a.e. $x\in X$, if $x$ is contained in infinitely many $Q\in \Delta^{i,j}$, then
\begin{equation}
\label{e:uplim}
\lim_{r\rightarrow 0}\sup_{Q\in \Delta^{i,j}\atop x\in Q\subseteq B(x,r)} \frac{\mu(Q)}{(\diam Q)^{s}}\leq 1. 
\end{equation}
\end{lemma}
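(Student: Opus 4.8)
The plan is to prove this lemma as a quantitative-flavored but essentially classical statement about upper densities with respect to the cube family $\Delta^{i,j}$, mimicking the standard proof that $\Theta^{s,*}(\mu,\cdot)\le 1$ $\mu$-a.e.\ (the proof of \eqref{e:federer} from \cite[2.10.19(5)]{Federer} or \cite[Theorem 6.2]{Mattila}). Fix $i,j$ and let $\lambda>1$. Define
\[
Z_\lambda=\Big\{x\in X: x\text{ lies in infinitely many }Q\in\Delta^{i,j}\text{ and }\limsup_{r\to0}\sup_{Q\in\Delta^{i,j},\,x\in Q\subseteq B(x,r)}\frac{\mu(Q)}{(\diam Q)^s}>\lambda\Big\}.
\]
It suffices to show $\mu(Z_\lambda)=0$ for every $\lambda>1$, since then $\bigcup_{k}Z_{1+1/k}$ is the (at most) $\mu$-null set we must discard.

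The core step is a Vitali-type covering argument. Fix $\delta>0$; by outer regularity of $\mu=\cH^s|_X$ on the Borel set $Z_\lambda$ (or rather, by the definition of $\cH^s$ as a sup over $\delta$-covers together with the fact that $\mu$ is a finite Borel-regular measure) choose an open set $U\supseteq Z_\lambda$ with $\mu(U)\le\mu(Z_\lambda)+\delta$, and also choose $\delta$ small. For each $x\in Z_\lambda$ and each $r>0$ there exists, by definition of $Z_\lambda$, a cube $Q\in\Delta^{i,j}$ with $x\in Q\subseteq B(x,r)\cap U$ and $\mu(Q)>\lambda(\diam Q)^s$; since $r$ is arbitrary we get arbitrarily small such cubes. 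The family $\mathcal{F}$ of all such cubes covers $Z_\lambda$ finely. Because the cubes of $\Delta^{i,j}$ are nested (property (1) of Lemma~\ref{l:cubes}: any two intersecting cubes are nested), any subfamily can be thinned to a \emph{disjoint} subfamily with the same union simply by taking maximal cubes — this is cleaner than the usual $5r$-covering lemma and is where the nestedness is essential. So pick a countable disjoint subfamily $\{Q_k\}\subseteq\mathcal{F}$, all contained in $U$, covering $\mu$-almost all of $Z_\lambda$ (the maximal cubes from $\mathcal F$ cover $Z_\lambda$ entirely). Then
\[
\lambda\sum_k(\diam Q_k)^s<\sum_k\mu(Q_k)=\mu\Big(\bigcup_k Q_k\Big)\le\mu(U)\le\mu(Z_\lambda)+\delta,
\]
using disjointness and $\mu(Q_k)>\lambda(\diam Q_k)^s$. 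On the other hand $\{Q_k\}$ (after further subdividing to make diameters $\le\delta$, using that arbitrarily small cubes are available) is an admissible $\delta$-cover of the part of $Z_\lambda$ it captures, so $\cH^s_\delta(Z_\lambda)\le\sum_k(\diam Q_k)^s+(\text{tiny leftover})$. Combining, $\lambda\,\cH^s_\delta(Z_\lambda)\le\mu(Z_\lambda)+2\delta$ roughly; letting $\delta\to0$ gives $\lambda\,\cH^s(Z_\lambda)\le\mu(Z_\lambda)=\cH^s(Z_\lambda)$, and since $\lambda>1$ and $\cH^s(Z_\lambda)\le\cH^s(X)<\infty$, we conclude $\cH^s(Z_\lambda)=0$.

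The step I expect to be the main obstacle is making the covering/cofinality bookkeeping fully rigorous with Schul's cubes rather than dyadic cubes: one must verify that for $\mu$-a.e.\ $x\in Z_\lambda$ there really are cubes $Q\in\Delta^{i,j}$ with $x\in Q$ of arbitrarily small diameter and with $Q\subseteq B(x,r)$ for the given $r$ — i.e.\ that the "$x$ is contained in infinitely many $Q\in\Delta^{i,j}$" hypothesis, plus \eqref{e:1+2N^-1} (so $\diam Q\le(1+8M^{-1})\,2\,r_{B_Q}\to0$ as the generation increases), genuinely forces a fine cover by cubes sitting inside small balls around $x$. The containment $Q\subseteq B(x,r)$ follows from \eqref{e:1+2N^-1} once $Q$ has small enough radius, because $x\in Q\subseteq(1+8M^{-1})B_Q$ forces $B_Q$'s center within $(1+8M^{-1})r_{B_Q}$ of $x$ and hence $Q\subseteq B(x,(1+\,\text{const})r_{B_Q})\subseteq B(x,r)$. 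A second, minor technical point is justifying the reduction to a disjoint cover and the inequality $\cH^s_\delta$-estimate in the non-dyadic setting; but the nestedness property (1) of $\Delta^{i,j}$ makes the disjointification essentially free, so the argument goes through as in the classical case. I would also remark that the constant $\lambda$ plays no quantitative role here — this lemma is purely qualitative, the quantitative content of Theorem~\ref{t:main} being extracted afterwards from how these cubes are summed.
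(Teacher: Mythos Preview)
Your proposal is correct and follows essentially the same argument as the paper: both define the bad set where the cube-density limsup exceeds some $\lambda>1$, use outer regularity of $\mu=\cH^s|_X$ to trap it in a slightly larger open set $U$, cover the bad set by small cubes $Q\subseteq U$ with $\mu(Q)>\lambda(\diam Q)^s$, extract the maximal (hence disjoint, by nestedness) cubes, and compare $\lambda\cH^s_\ve$ of the bad set against $\mu(U)$ to force a contradiction. Your extra commentary about verifying the fine-cover property via \eqref{e:1+2N^-1} is more explicit than the paper (which simply asserts one may pick such $Q(x)$), and your ``further subdividing'' and ``tiny leftover'' remarks are unnecessary since the cubes can be chosen with diameter below $\ve$ from the outset and the maximal cubes already cover $Z_\lambda$ entirely --- but these are cosmetic points, not gaps.
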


\begin{proof}
The proof is exactly the same as the analogous one with balls in place of cubes \cite[Theorem 6.2]{Mattila}, apart from the fact that we don't have the Besicovitch covering lemma, but this is not needed if we are working with cubes. We include the proof for completeness: 

Let $t>1$ and
\[
E_{t}=\left\{x\in X: \lim_{r\rightarrow 0}\sup_{Q\in \Delta^{i,j}\atop x\in Q\subseteq B(x,r)} \frac{\mu(Q)}{(\diam Q)^{s}}> t\right\},
\]
Assume $\mu(E_{t})>0$ for some $t>1$. Since $\mu(X)<\infty$, $\mu|_X$ is Radon (see Theorem \cite[Theorem 1.11 and 4.2]{Mattila}), so we may find $U\supseteq  E_{t}$ open with 
\begin{equation}
\label{e:1-tmu}
\mu(U\backslash E_{t})<(t-1)\mu(E_{t}).
\end{equation}
For any $\ve>0$ and for each $x\in E_{t}$, we may pick $Q(x)\subseteq U$ with $\diam Q(x)<\ve$ and $\frac{\mu(Q(x))}{(\diam Q(x))^{s}}> t$. Let $Q_{k}$ be the collection of maximal cubes we get in this way, so $E_{t}\subseteq \bigcup_{k} Q_{k}$. Hence,
\[
t\cH^{s}_{\ve}(E_{t})
\leq t\sum_{k} (\diam Q_{k})^{s}
<\sum_{k} \mu(Q_{k})
\leq \mu(U)
\]
thus, letting $\ve\rightarrow 0$, we get 
\[
t\mu(E_{t})
=\lim_{\ve \rightarrow 0}t\cH^{s}_{\ve}(E_{t})
\leq \mu(U)
\stackrel{\eqref{e:1-tmu}}{<} \mu(E_{t}).\]
which is impossible, thus $\mu(E_{t})=0$ for all $t>1$, which proves \eqref{e:uplim}. 
\end{proof}

The proof of Theorem \ref{t:main} now proceeds almost exactly as in \cite[Lemma 3.25]{Sch07-TST}. 

\begin{lemma}
Let
\[
\cC^{i,j}=\{Q\in \Delta^{i,j}: \cH^{s}_{\rho \diam Q}(X\cap Q)>(1+\ve/4)(\diam Q)^{s}\}
\]
and 
\[
\cC =  \left\{ B\in \cB: \cH^{s}_{\rho r_{B}}(X\cap AB)>(1+\ve)(2r_{B})^{s}\right\}.
\]
Then
\begin{equation}
\label{e:QBinC}
\{Q_{B}:B\in \cC, AB\in  c\cB^{i,j}\} \subseteq \cC^{i,j}. 
\end{equation}
\end{lemma}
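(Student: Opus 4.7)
The plan is to unpack the statement directly. Given $B \in \cC$ with $AB \in c\cB^{i,j}$, I would write $AB = cB'$ for some $B' \in \cB^{i,j}$, and let $Q := Q_{B'} \in \Delta^{i,j}$ be the associated Schul cube, whose inscribed ball $B_Q$ is exactly $cB' = AB$ by construction. The task is then to verify $Q \in \cC^{i,j}$.

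First, I would apply Lemma \ref{l:cubes}, specifically inequality \eqref{e:1+2N^-1}, to obtain $AB = B_Q \subseteq Q \subseteq (1 + 8M^{-1}) AB$. Since $M = 2^J$ and $2^{-J} < \min\{\rho,\ve/(16s)\}$ by \eqref{e:Jrho}, we have $8M^{-1} < \ve/(2s)$, yielding the tight size comparison $2Ar_B \leq \diam Q \leq 2Ar_B(1+\ve/(2s))$. Raising to the $s$-th power and using $(1+\ve/(2s))^s \leq e^{\ve/2} \leq 1+\ve/2$ gives $(2Ar_B)^s \leq (\diam Q)^s \leq (1+\ve/2)(2Ar_B)^s$.

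Next I would combine the set containment $X\cap AB \subseteq X\cap Q$ with the $\cC$ hypothesis $\cH^s_{\rho r_B}(X\cap AB) > (1+\ve)(2r_B)^s$. The scale $\rho\diam Q$ is coarser than $\rho r_B$ by at most a factor of $2A(1+\ve/(2s))$, so one needs to pass from a $\rho\diam Q$-cover of $X\cap Q$ to a $\rho r_B$-cover of $X\cap AB$ while preserving $\sum(\diam)^s$ up to a controllable constant. This is achieved via a refinement of covers exploiting the $C$-doubling structure of $X$; after chasing constants, one obtains
\[
\cH^s_{\rho\diam Q}(X\cap Q) > (1+\ve/4)(\diam Q)^s,
\]
which is exactly the defining condition for $Q \in \cC^{i,j}$.

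The hard part is the scale-refinement bookkeeping: the $(1+\ve/4)$ slack on the $\cC^{i,j}$ side must simultaneously absorb both the $1+\ve/2$ slack between $(\diam Q)^s$ and $(2Ar_B)^s$ coming from the geometric comparison, and the doubling-dependent constants from refining covers between the scales $\rho\diam Q$ and $\rho r_B$. The calibration of $J$ in \eqref{e:Jrho}—with $2^{-J}$ smaller than both $\rho$ and $\ve/(16s)$—is designed precisely to make these various constants combine within the $(1+\ve/4)$ tolerance.
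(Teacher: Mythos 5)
Your setup and diameter bookkeeping are correct and match the paper: you identify $Q=Q_{B'}$ with $B_Q=cB'=AB$, obtain $AB\subseteq Q\subseteq(1+8M^{-1})AB$ from \eqref{e:1+2N^-1}, and use $8M^{-1}=2^{-J+3}<\ve/(2s)$ from \eqref{e:Jrho} to deduce $(\diam Q)^{s}\leq(1+\ve/2)(2Ar_{B})^{s}$. The paper runs exactly this computation (via $(1+t)^{-s}\geq 1-st$).

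The departure is at the content step, and this is where there is a genuine gap. The paper does \emph{not} refine covers: it passes from $\cH^{s}_{\rho\diam Q_{B}}(X\cap Q_{B})$ to the $\cC$ hypothesis by invoking only the containment $X\cap AB\subseteq X\cap Q_B$ and the monotonicity \eqref{e:Hdecreasing}, then pays the small $(1+8M^{-1})^{-s}$ price from the diameter comparison. There is no appeal to the doubling structure of $X$ at this point. Your proposal instead posits a ``refinement of covers exploiting the $C$-doubling structure'' to move between the scales $\rho\diam Q$ and $\rho r_{B}$, which differ by a factor $\approx 2A$. This step, as sketched, cannot close: if you take a near-optimal cover of $X\cap Q$ at scale $\rho\diam Q$ and refine each piece into balls of radius $\sim\rho r_{B}/2$, the doubling bound lets each piece of diameter $\leq\rho\diam Q$ be covered by $\lesssim C^{\log_{2}(4A)}$ pieces of the smaller scale, so the quantity $\sum(\diam)^{s}$ is only preserved up to a multiplicative constant on the order of $C^{\log_{2}(4A)}(2A)^{-s}$. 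This constant depends on $C$ and $A$ and is nowhere near $1+O(\ve)$, so it cannot be absorbed into the $(1+\ve/4)$ tolerance. The calibration $2^{-J}<\ve/(16s)$ in \eqref{e:Jrho} is designed only to tame the ratio $\diam Q/\diam(AB)\leq 1+8M^{-1}$; it has no control over the much larger scale gap $\diam Q/r_{B}\approx 2A$ that your refinement would have to bridge. There is also a direction issue: you need a \emph{lower} bound on $\cH^{s}_{\rho\diam Q}(X\cap Q)$, and passing to a finer cover only gives an inequality in the useless direction unless the refinement loss is quantitatively tiny. In short, the covering-refinement idea is not in the paper, and as a substitute step it does not deliver the $(1+\ve/4)$ conclusion.
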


\begin{proof}

Let $B\in \cC$ be such that $AB\in c\cB^{i,j}$. Recall that by \eqref{e:1+2N^-1} that $Q_{B}\supseteq B$ and so $\diam Q_{B}\geq \diam B=2B$. Thus, using that $(1+t)^{-s}\geq 1-st$ for $t\geq 0$, 
\begin{align*}
\cH_{\rho \diam Q_{B}}^{s}(X\cap Q_{B})
& \stackrel{\eqref{e:1+2N^-1}\atop \eqref{e:Hdecreasing}}{\geq} \cH_{\rho 2r_{B}}^{s}(X\cap B)
\geq (1+\ve)(\diam B)^{s}\\
& 
\stackrel{\eqref{e:1+2N^-1}}{\geq}  (1+\ve)(1+8M^{-1})^{-s}(\diam Q)^{s}\\
& = (1+\ve)(1+2^{-J+3})^{-s}(\diam Q)^{s}\\
& \geq (1+\ve)(1-s2^{-J+3})(\diam Q)^{s}\\
& \stackrel{\eqref{e:Jrho}}{ \geq} (1+\ve)\ps{1-\frac{\ve}{2}}
 \geq \ps{1+\frac{\ve}{4}}(\diam Q)^{s}
\end{align*}
\end{proof}

\begin{lemma}

For each $Q\in \cC^{i,j}$, we claim there is a function $w_{Q}$ defined on $X$, and a constant $\alpha >1 $ so that 
\begin{enumerate}
\item $\supp w_{Q}=0$ on $Q^{c}$,
\item $\int w_{Q}d\mu=(\diam Q)^{s}$
\item $w_{Q}(x)<\alpha^{-k_{Q}(x)}$ where $k_{Q}(x)$ is the number of cubes in $\cC^{i,j}$ properly contained in $Q$ containing $x$
\end{enumerate}
\end{lemma}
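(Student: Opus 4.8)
The plan is to construct the weight $w_Q$ by a ``fair distribution'' of mass down the tree of cubes in $\cC^{i,j}$ below $Q$, mimicking the martingale construction in \cite[Lemma 3.25]{Sch07-TST}. The idea is that each cube $R \in \cC^{i,j}$ carries an excess of content (at least a factor $(1+\ve/4)$ over the naive $(\diam R)^s$), and we want to redistribute a total mass of $(\diam Q)^s$ among the leaves of the tree in such a way that the density of the weight at a point decays geometrically in the number of cubes of $\cC^{i,j}$ that contain it. Concretely, I would work only with the sub-family $\cC^{i,j}(Q) = \{R \in \cC^{i,j} : R \subseteq Q\}$, which by Lemma \ref{l:cubes}(1) forms a tree under inclusion with root $Q$. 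For a cube $R$ in this tree, let $\ch(R)$ denote its children, i.e.\ the maximal elements of $\cC^{i,j}(Q)$ strictly contained in $R$; since the cubes in $\Delta^{i,j}$ are approximately balls by \eqref{e:1+2N^-1} and $X$ is $C$-doubling, each $R$ has only boundedly many children, and we will need the key estimate
\[
\sum_{R' \in \ch(R)} (\diam R')^s \leq \lambda (\diam R)^s
\]
for some $\lambda = \lambda(C,\ve) < 1$, which is exactly where the hypothesis $\cH^s_{\rho \diam R}(X \cap R) > (1+\ve/4)(\diam R)^s$ gets used: the children, together with the part of $X \cap R$ not covered by children, must account for this content, and the non-child part is where content behaves sub-linearly.

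The construction of $w_Q$ then goes by downward induction on the tree. Assign to the root $Q$ the ``mass budget'' $m(Q) = (\diam Q)^s$. Given a mass budget $m(R)$ assigned to a cube $R$ in the tree, split it among the children proportionally to $(\diam R')^s$, i.e.\ set $m(R') = m(R) \cdot (\diam R')^s / \big(\sum_{R'' \in \ch(R)}(\diam R'')^s\big)$ for $R' \in \ch(R)$ — but only pass along a fraction, keeping some mass ``deposited at level $R$'' to be spread as a density over the part of $X \cap R$ not covered by any child. Because of the estimate $\sum_{R'\in\ch(R)}(\diam R')^s \le \lambda(\diam R)^s$, the budget passed to a child satisfies $m(R') \le \lambda^{-1}(\diam R')^s \cdot (m(R)/(\diam R)^s) \cdot \lambda = \ldots$; iterating, the density of the deposited mass over a point $x$ at ``depth $k$'' is controlled by $\alpha^{-k}$ for a suitable $\alpha > 1$ (one can take $\alpha$ depending on $\lambda$ and the doubling constant), giving property (3). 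Property (1) is immediate since we never deposit mass outside $Q$. Property (2) — that $\int w_Q \, d\mu$ equals $(\diam Q)^s$ exactly — requires that all the budget eventually gets deposited, i.e.\ that the tree is ``mass-conserving''; this is where one uses that $\mu(X) < \infty$ together with the previous lemma \eqref{e:uplim}, which guarantees that along $\mu$-a.e.\ branch the densities $\mu(R)/(\diam R)^s$ stay bounded, so the tree cannot be infinitely deep on a set of positive measure in a way that would let mass ``escape to infinity''; a limiting argument (truncate the tree at depth $k$, deposit the leftover budget of truncated branches onto those leaf cubes, and let $k \to \infty$ using monotone/dominated convergence for the Choquet integral) then pins down $\int w_Q\,d\mu = (\diam Q)^s$.

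The main obstacle I expect is making the mass-conservation argument for property (2) rigorous in the setting of Choquet integrals against $\mu = \cH^s|_X$, which is not additive — the usual ``$\int w_Q\,d\mu = \sum$ of deposits'' manipulation needs care, and one must be careful that the deposited densities on the disjoint pieces $X \cap R \setminus \bigcup \ch(R)$ interact correctly under the Choquet integral (they do, because these pieces are pairwise disjoint and the construction can be arranged so that $w_Q$ is, say, a locally constant multiple of an indicator on each piece, reducing the Choquet integral to an honest sum on the sets where mass lives). A secondary subtlety is verifying the child-packing inequality $\sum_{R'\in\ch(R)}(\diam R')^s \le \lambda(\diam R)^s$ with $\lambda < 1$: one needs to compare the Hausdorff content $\cH^s_{\rho\diam R}$ of $X\cap R$ to the sum of contents of the children plus the content of the residual set, and extract the gain of $\ve/4$; this is exactly analogous to Schul's computation and should go through using the definition of Hausdorff content and the fact (from \eqref{e:uplim}, or directly) that the residual set's content is sub-linear, but it is the quantitative heart of the argument and deserves the most attention.
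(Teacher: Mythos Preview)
Your overall plan is the right one and matches the paper's approach, but the key estimate you propose is stated in the wrong direction and is in general false. You claim
\[
\sum_{R'\in\ch(R)}(\diam R')^{s}\leq \lambda(\diam R)^{s}\quad\text{for some }\lambda<1,
\]
but there is no reason this should hold: the children are disjoint cubes in $R$, and nothing prevents them from covering $X\cap R$ entirely with total $s$-th-power diameter far exceeding $(\diam R)^{s}$ (think of a wiggly curve with $s=1$). The hypothesis $\cH^{s}_{\rho\diam R}(X\cap R)>(1+\ve/4)(\diam R)^{s}$ gives the \emph{opposite} inequality, namely a \emph{lower} bound: since each child has $\diam R'\leq 2^{-J}\diam R<\rho\diam R$, the children together with any $\rho\diam R$-cover of the residual $R\setminus\bigcup_{R'\in\ch(R)}R'$ form an admissible cover for $\cH^{s}_{\rho\diam R}(X\cap R)$, whence
\[
m(R):=\mu\Bigl(R\setminus\bigcup_{R'\in\ch(R)}R'\Bigr)+\sum_{R'\in\ch(R)}(\diam R')^{s}\;\geq\;\cH^{s}_{\rho\diam R}(X\cap R)\;>\;\alpha(\diam R)^{s},
\]
with $\alpha=1+\ve/4$. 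This is the inequality that drives the decay: distribute the budget $(\diam Q)^{s}$ by giving each child $R'$ the share $(\diam R')^{s}/m(R)$ of the parent's budget and depositing the share $\mu(\text{residual})/m(R)$ on the residual; then the ratio $(\text{budget of }R')/(\diam R')^{s}$ drops by a factor $\leq\alpha^{-1}$ at every step, which is exactly property (3). Your formula $m(R')=m(R)\cdot(\diam R')^{s}/\sum(\diam R'')^{s}$ passes \emph{all} mass to children (the fractions sum to $1$) and leaves nothing to deposit, so it cannot give decay; the denominator must be $m(R)$, not the child sum alone.

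Two smaller points. First, your worry about Choquet integrals is misplaced: here $\mu=\cH^{s}|_{X}$ is Hausdorff \emph{measure}, a genuine countably additive Borel measure, so ordinary Lebesgue integration and dominated convergence apply directly; the Choquet machinery elsewhere in the paper concerns Hausdorff \emph{content} and is irrelevant to this lemma. Second, the previous lemma (the density bound \eqref{e:uplim}) is used exactly as you suspect, to guarantee that $\mu$-a.e.\ point lies in only finitely many cubes of $\cC^{i,j}$, so that the truncated weights $w_{Q}^{k}$ converge pointwise a.e.\ and dominated convergence gives $\int w_{Q}\,d\mu=(\diam Q)^{s}$.
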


\begin{proof}
\def\Stop{{\rm Stop}}
For convenience, we will treat functions as measures below, so given a function $f$, $f(A)$ will also denote $\int_{A} fd\mu$. 

We will define $w_{Q}$ in a martingale fashion, that is, as a sequence of functions where we obtain the next function by redefining the previous function in various cubes so that the integrals in those cubes is unaffected. First we need to introduce some notation relating to the cubes. For  $Q\in \cC^{i,j}$, let $\Stop_{0}=\{Q\}$, $\Stop_{1}(Q)$ be the set of maximal cubes in $\cC^{i,j}$ properly contained in $Q$ and inductively set 
\[
\Stop_{k+1}(Q) = \bigcup_{R\in \Stop_{k}(Q)}\Stop_{1}(R).
\]
Now we define the sequence of functions that will converge to $w_Q$. We first let 
\[
w_{Q}^{0}=\one_{X\cap Q}\frac{(\diam Q)^{s}}{\mu(Q)},
\] 
so in this way, $w_{Q}^{0}(Q)=(\diam Q)^{s}$. 

Let
\[
R_{Q}=Q\backslash \bigcup_{R\in \Stop_1(Q)}R
\]
and
\[
m(Q)=\mu(R_{Q})+\sum_{R\in \Stop_1(Q)}(\diam R)^{s} . 
\]
Note that by \eqref{e:Jrho}, since cubes properly contained in $Q$ (and hence those cubes in $\Stop_{1}(Q)$) have diameter at most $2^{-J}\diam Q<\rho\diam Q$, setting
\begin{equation}
\label{e:alpha}
\alpha = 1+\frac{\ve}{4}
\end{equation}
 we have 
\begin{align}
\label{e:mQ>1+e}
m(Q) & \geq\cH^{s}_{\rho\diam Q}(R_{Q})+\sum_{R\in \Stop_1(Q)}(\diam R)^{s} \\
& \geq \cH^{s}_{\rho\diam Q}(X\cap Q)\geq \ps{1+\frac{\ve}{4}}(\diam Q)^{s}=\alpha(\diam Q)^{s}. \notag 
\end{align}
Now let $w_{Q}^{1}$ be a function on $X$ that is constant in the sets $R_{Q}$ and $R\in \Stop_{1}(Q)$ (and zero elsewhere) so that 
\[
w_{Q}^{1}(R_{Q}) = \frac{\mu(R_{Q})}{m(Q)} w_{Q}^{0}(Q) \;\; \mbox{ and }w_{Q}^{1}(R)=\frac{(\diam R)^{s}}{m(Q)}w_{Q}^{0}(Q) .
\]
In this way,
\[
w_{Q}^{1}(Q) = w_{Q}^{0}(Q) =(\diam Q)^{s}.
\]

Inductively, suppose for some $k\geq 1$ we have defined $w_{Q}^{k}$ for each $Q\in \cC^{i,j}$. We now let
\[
w_{Q}^{k+1}|_{R_{Q}} = w_{Q}^{k}|_{R_{Q}} \;\; \mbox{ and }
w_{Q}^{k+1}|_{R} = \frac{w_{Q}^{0}(Q) }{m(Q)}w_{R}^{k}|_{R} \;\; \mbox{ for $R\in \Stop_{1}(Q)$}.
\]
\begin{remark}
\label{r:wremark}
By construction, we have  for all $k$
\[
w_{Q}^{k}(R_{Q})=w_{Q}^{k-1}(R_{Q})=\cdots = w_{Q}^{1}(R_{Q}) = \frac{\mu(R_{Q})}{m(Q)} w_{Q}^{0}(Q),
\]
\[
w_{Q}^{k}(Q)=w_{Q}^{k-1}(Q)=\cdots  = w_{Q}^{0}(Q)=(\diam Q)^{s},
\]
and $w_{Q}^{k}$ is constant on each set $R_{T}$ for $T\in \bigcup_{\ell=0}^{k-1} \Stop_{\ell}(Q)$ and on $T\cap X$ for each $T\in \Stop_{k}(Q)$ (and is zero outside these sets). 
\end{remark}

We now claim that if $x\in Q$ is contained in $k_0$ many cubes from $\cC^{i,j}$ properly contained in $Q$
\begin{equation}
\label{e:w<1+e^k}
w_{Q}^{k_0}(x) < \alpha^{-k_0} .
\end{equation}

We begin the proof of the claim: First, since $w_{Q}^{0}$ is constant in $Q$, for $x\in Q$,
\[
w_{Q}^{0}(x) = \frac{(\diam Q)^{s}}{\mu(Q)}
\stackrel{\eqref{e:Hdecreasing}}{\leq}  \frac{(\diam Q)^{s}}{\cH^{s}_{\rho\diam Q}(X\cap Q)}
\stackrel{\eqref{e:mQ>1+e}}{<}\alpha^{-1}.
\]
This proves the $k=0$ case of \eqref{e:w<1+e^k}. For $k\geq 1$, let $T\in \Stop_k(Q)$. Then $T\in \Stop_{k-1}(R)$ for some $R\in \Stop_{1}(Q)$, and the construction implies
\begin{align}
 \frac{w_{Q}^{k}(T)}{(\diam T)^{s}}
& =  \frac{w_{Q}^{0}(Q)}{m(Q)}\frac{w_{R}^{k-1}(T)}{(\diam T)^{s}}
= \frac{(\diam Q)^{s}}{m(Q)}\frac{w_{R}^{k-1}(T)}{(\diam T)^{s}}
\stackrel{\eqref{e:mQ>1+e}}{<}\alpha^{-1}\frac{w_{R}^{k-1}(T)}{(\diam T)^{s}} \notag \\
& <\cdots < \alpha^{-k}\frac{w_{T}^{0}(T)}{(\diam T)^{s}}
=\alpha^{-k}
\label{e:w/T<...}
 \end{align}
 In particular, since $w_{Q}^{k}$ is constant on $T\cap X$, this shows that for $x\in T$,
\begin{align*}
w_{Q}^{k}(x)
&  =\frac{w_{Q}^{k}(T)}{\mu(T)}
\stackrel{\eqref{e:Hdecreasing}}{\leq} \frac{w_{Q}^{k}(T)}{\cH^{s}_{\rho\diam(T)}(X\cap T)}
\stackrel{\eqref{e:mQ>1+e}}{<} \alpha^{-1} \frac{w_{Q}^{k}(T)}{(\diam T)^{s}}\stackrel{\eqref{e:w/T<...}}{<}\alpha^{-k-1}.
\end{align*}
Moreover, if $U\in \Stop_{\ell}(Q)$ for some $1\leq \ell<k$, then since $w_{Q}^{k}$ is constant on $R_{U}$, for $x\in R_{U}$,
\begin{align*}
w_{Q}^{k}(x)
& =w_{Q}^{\ell}(x)
=\frac{w_{Q}^{\ell}(R_{U})}{\mu(U)}
\stackrel{\eqref{e:Hdecreasing}}{\leq}  \frac{w_{Q}^{\ell}(R_U)}{\cH^{s}_{\rho\diam(U)}(X\cap U)}
\stackrel{\eqref{e:mQ>1+e}}{<} \alpha^{-1} \frac{w_{Q}^{\ell}(R_U)}{(\diam U)^{s}} \\
& \leq \alpha^{-1} \frac{w_{Q}^{\ell}(U)}{(\diam U)^{s}} 
\stackrel{\eqref{e:w/T<...}}{<}\alpha^{-\ell-1}
\end{align*}
By Remark \ref{r:wremark}, any $x\in Q$ where $w_{Q}^{k}$ is nonzero is in either some $T\in \Stop_{k}(Q)$ or $R_{U}$ for some $U\in \Stop_{\ell}(Q)$, $\ell<k$, so the above estimates imply  \eqref{e:w<1+e^k} and prove the claim. 

In particular, $w_{Q}^{k}$ is a sequence of uniformly bounded $L^{\infty}$ functions vanishing outside of $X\cap Q$. By \eqref{e:uplim}, $\mu$-a.e. $x\in X$ is contained in at most finitely many $Q\in \cC^{i,j}$, and so $w_{Q}^{k}$ converges pointwise a.e. to a function $w_{Q}$ that is zero outside $Q^{c}$ (proving (1)), and by the dominated convergence theorem, 
\[
w_{Q}(R)=\lim_{k\rightarrow\infty}w_{Q}^{k}(R) \;\; \mbox{ for }R\in \bigcup_{k=0}^{\infty} \Stop_{k}(Q),
\]
proving (2). Finally, (3) follows from the previous claim.

\end{proof}

In particular, if $\cC_{0}^{i,j}$ are the maximal cubes in $\cC^{i,j}$ (since recall the sizes of the balls in $\cC^{i,j}$ are bounded above), then those cubes are disjoint and thus
\begin{align*}
\sum_{Q\in \cC^{i,j} }(\diam Q)^{s}
& =\sum_{Q\in \cC^{i,j} }\int w_{Q}(x)d\mu(x)
=\int \sum_{Q\in \cC^{i,j} } w_{Q}(x)d\mu(x)
\\
& <\sum_{Q_0\in \cC^{i,j}_{0}}\int_{Q_{0}}\sum_{k=0}^{\infty} \alpha^{-k} d\mu(x) 
=\frac{1}{1-\alpha}\sum_{Q_0\in \cC^{i,j}_{0}}\mu(Q_{0})
\lec \frac{\mu(X)}{\ve}.
\end{align*}

Hence, by our choice of $J$, and recalling the definition of $N$ from \eqref{e:A<2^a}.
\begin{align*}
\sum_{B\in \cC}(\diam B)^{s}
& \leq \sum_{i,j}\sum_{Q_{AB}\in c\cC^{i,j}}(\diam Q_{AB})^{s}
\leq \sum_{i,j}\sum_{Q\in c\cC^{i,j}}(\diam Q)^{s}\\
& \lec \sum_{i=1,...,N\atop j=1,...,J}\frac{\mu(X)}{\ve} 
\leq \frac{NJ}{\ve} \mu(X)
\lec N\frac{\log \frac{1}{\min\{\rho,\ve/s\}}}{\ve} \mu(X).
\end{align*}

\appendix

\section{Weak convergence of measures}

We now prove the results stated in Section \ref{s:WCM}. We begin by recalling the definitions of Choquet integration and weak convergence of measures. Recall that for us, a measure is a monotonic, countably subadditive set function which vanishes for the empty set. 

For a measure $\mu$ and a function $f: \R^n \rightarrow [0,\infty)$, define the Choquet integral of $f$ with respect to $\mu$ by the formula
\[
\int f \, d\mu = \int_0^\infty \mu( \{x \in \R^n : f(x) > t \}) \, dt.
\]
For a real valued function $f: \R^n \rightarrow \R,$ let $f^+ = \max\{f,0\}$ and $f^{-} = \max \{-f,0 \}.$ Define the Choquet integral of $f$ with respect to $\mu$ by 
\[
\int f \, d\mu = \int f^+ \, d\mu - \int f^{-} \, d\mu. 
\]

For a measure $\mu,$ the Choquet integral with respect to $\mu$ is not necessarily additive or even subadditive. We do however have the following quasi-subadditivity. 

\begin{lemma}\label{l:quasiadd}
	Let $0<\gamma <1, \ \mu$ a measure and $f,g : \R^n \rightarrow [0,\infty).$ Then 
	\begin{align}\label{e:subadd}
		\int (f + g) \, d\mu \leq \frac{1}{\gamma} \int f \, d\mu + \frac{1}{1-\gamma} \int g \, d \mu.
	\end{align}
\end{lemma}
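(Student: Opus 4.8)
The plan is to prove the quasi-subadditivity inequality by working directly with the distributional (Choquet) definition of the integral and exploiting the elementary set inclusion $\{f+g>t\}\subseteq \{f>\gamma t\}\cup\{g>(1-\gamma)t\}$. Indeed, if $f(x)+g(x)>t$ but $f(x)\le \gamma t$, then necessarily $g(x)>t-\gamma t=(1-\gamma)t$, which gives the inclusion. Since $\mu$ is monotonic and countably (hence finitely) subadditive, this yields
\[
\mu(\{f+g>t\})\le \mu(\{f>\gamma t\})+\mu(\{g>(1-\gamma)t\})
\]
for every $t>0$.

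Next I would integrate this pointwise (in $t$) inequality over $(0,\infty)$ and change variables. By definition,
\[
\int (f+g)\,d\mu=\int_0^\infty \mu(\{f+g>t\})\,dt\le \int_0^\infty \mu(\{f>\gamma t\})\,dt+\int_0^\infty \mu(\{g>(1-\gamma)t\})\,dt.
\]
In the first integral substitute $s=\gamma t$, so $dt=\gamma^{-1}ds$ and the integral becomes $\gamma^{-1}\int_0^\infty \mu(\{f>s\})\,ds=\gamma^{-1}\int f\,d\mu$; in the second substitute $s=(1-\gamma)t$ to get $(1-\gamma)^{-1}\int g\,d\mu$. Combining gives exactly \eqref{e:subadd}. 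One should note that all quantities involved are nonnegative (possibly $+\infty$), so the manipulations are valid with values in $[0,\infty]$, and the monotonicity of $t\mapsto \mu(\{f>t\})$ ensures measurability of the integrands in $t$.

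There is essentially no serious obstacle here; the only point requiring a moment's care is the set-inclusion step, which must be stated with the strict inequalities matching the definition of the Choquet integral (superlevel sets $\{f>t\}$ rather than $\{f\ge t\}$), and the observation that finite subadditivity of $\mu$ — which we do have, since $\mu$ is assumed countably subadditive — is all that is needed. I would present the argument in three short displayed steps: the inclusion, the resulting inequality of measures, and the integration with change of variables.
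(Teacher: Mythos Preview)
Your proof is correct and follows essentially the same approach as the paper: establish the inclusion $\{f+g>t\}\subseteq\{f>\gamma t\}\cup\{g>(1-\gamma)t\}$, apply subadditivity of $\mu$, and integrate in $t$. The paper's version is terser, leaving the change of variables implicit, but the argument is identical.
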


\begin{proof}
	For any $t \geq 0$ we have
	\[
	\{x \in \R^n : f(x) + g(x) > t \} \subseteq \{x \in \R^n : f(x) > \gamma t \} \cup \{x \in \R^n : g(x) > (1-\gamma)t \},
	\]
	since outside this union, $f(x) + g(x) \leq \gamma t + (1-\gamma)t = t.$ The lemma follows immediately by using the sub-additivity of $\mu$ and integrating in $t$. 
\end{proof}

\begin{definition}
	Let $\{\mu_k\}$ be a sequence of measures on $\R^n.$ We say the sequence $\{\mu_k\}$ converges \textit{weakly} to a Radon measure $\mu$, and write 
	\[
	\mu_k \rightharpoonup \mu,
	\]
	if
	\[
	\lim_{k \rightarrow \infty} \int \vp \, d\mu_k = \int \vp \, d\mu \quad \text{for all} \ \vp \in C_0(\R^n). 
	\]
	Here, $C_0(\R^n)$ is the space of continuous functions of compact support. 
\end{definition}

We can now prove the main results from Section \ref{s:WCM}, we shall state each result again before proving it. 

\begin{lemma}\label{l:semicont}
	Suppose $\{\mu_k \}$ is a sequence of measures converging weakly to a Radon measure $\mu.$ For $K \subseteq \R^n$ compact and $U \subseteq \R^n$ open we have 
	\[
	\mu(K) \geq \limsup_{k \rightarrow \infty} \mu_k(K)
	\]
	and
	\[
	\mu(U) \leq \liminf_{k \rightarrow \infty} \mu_k(U). 
	\]
\end{lemma}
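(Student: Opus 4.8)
\textbf{Proof plan for Lemma \ref{l:semicont}.}

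The plan is to reduce both statements to the weak convergence hypothesis by squeezing the indicator functions of $K$ and $U$ between suitable continuous compactly supported functions, exactly as in the classical (additive) case, while being careful that the only properties of the $\mu_k$ and $\mu$ we invoke are monotonicity and countable subadditivity (for the $\mu_k$) and the Radon property (for $\mu$). For the first inequality, given $\delta>0$ I would use the Radon property of $\mu$ to pick an open set $U\supseteq K$ with $\mu(U)\leq\mu(K)+\delta$, then choose $\vp\in C_0(\R^n)$ with $\one_K\leq\vp\leq\one_U$ (Urysohn; here $\vp$ can be taken supported in a slightly larger compact set). Then for every $k$, monotonicity of $\mu_k$ gives $\mu_k(K)\leq\int\vp\,d\mu_k$, and monotonicity of $\mu$ gives $\int\vp\,d\mu\leq\mu(U)$. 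Hence
\[
\limsup_{k\rightarrow\infty}\mu_k(K)\leq\lim_{k\rightarrow\infty}\int\vp\,d\mu_k=\int\vp\,d\mu\leq\mu(U)\leq\mu(K)+\delta,
\]
and letting $\delta\rightarrow 0$ finishes this half. Note that the Choquet integral of a bounded function against a monotone set function is monotone in the integrand, which is all that is needed for the inequalities $\mu_k(K)\leq\int\vp\,d\mu_k$ and $\int\vp\,d\mu\leq\mu(U)$.

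For the second inequality I would work from the inside. Fix any compact $K\subseteq U$ and pick $\vp\in C_0(\R^n)$ with $\one_K\leq\vp\leq\one_U$ and $\supp\vp\subseteq U$. Then for each $k$, monotonicity gives $\int\vp\,d\mu_k\leq\mu_k(U)$, so
\[
\int\vp\,d\mu=\lim_{k\rightarrow\infty}\int\vp\,d\mu_k\leq\liminf_{k\rightarrow\infty}\mu_k(U),
\]
while $\mu(K)\leq\int\vp\,d\mu$ by monotonicity of $\mu$. Thus $\mu(K)\leq\liminf_{k\rightarrow\infty}\mu_k(U)$ for every compact $K\subseteq U$, and taking the supremum over such $K$ and using inner regularity of the Radon measure $\mu$ on the open set $U$ gives $\mu(U)\leq\liminf_{k\rightarrow\infty}\mu_k(U)$.

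The only genuinely delicate point is making sure the Choquet integral behaves well enough for these sandwiching arguments: one must check that for a monotone countably subadditive set function $\nu$ and bounded $0\leq f\leq g$ one has $\int f\,d\nu\leq\int g\,d\nu$ (immediate from the layer-cake formula, since $\{f>t\}\subseteq\{g>t\}$ and $\nu$ is monotone), and that $\one_K\leq\vp$ implies $\nu(K)\leq\int\vp\,d\nu$ and $\vp\leq\one_U$ implies $\int\vp\,d\nu\leq\nu(U)$ (again from the layer-cake formula, using $\{ \vp>t\}\supseteq K$ for $t<1$, respectively $\{\vp>t\}\subseteq U$ for $t\geq 0$, together with monotonicity). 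Additivity of $\nu$ is never used, so the classical proof goes through verbatim; the main obstacle is thus purely bookkeeping about the Choquet integral rather than anything substantive, and the argument does not require the quasi-subadditivity of Lemma \ref{l:quasiadd}.
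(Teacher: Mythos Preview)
Your proposal is correct and follows essentially the same approach as the paper's proof: both halves use Urysohn functions to sandwich between indicator functions, invoke weak convergence, and use the Radon regularity of $\mu$ to pass from the approximating set back to $K$ or $U$. Your additional remarks verifying that the layer-cake formula gives the needed monotonicity inequalities for the Choquet integral (so that non-additivity of the $\mu_k$ is harmless here) are a worthwhile clarification that the paper leaves implicit.
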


\begin{proof}
	
	Let $\ve >0.$ Since $\mu$ is Radon, there exists and open set $V \supset K$ such that $\mu(V) \leq \mu(K) + \ve.$ By Urysohn's Lemma, there is $\vp \in C_0(\R^n)$ such that $0 \leq \vp \leq 1, \ \vp \equiv 1$ on $K$ and $\text{supp} \, \vp \subset V.$ Then
	\begin{align*}
		\mu(K) &\geq \mu(V) - \ve \geq \int \vp \, d\mu - \ve = \lim_{k \rightarrow \infty} \int \vp \, d\mu_k - \ve \\
		&\geq \limsup_{k \rightarrow \infty} \mu_k(K) - \ve. 
	\end{align*}
	Similarly, there exist a compact set $F \subset U$ such that $\mu(F) \geq \mu(U) - \ve.$ We can find $\vp \in C_0(\R^n)$ such that $0 \leq \vp \leq 1, \ \vp \equiv 1$ on $F$ and $\text{supp} \, \vp \subset U.$ Then 
	\begin{align*}
		\mu(U) &\leq \mu(F) + \ve \leq \int \vp \, d\mu + \ve = \lim_{k \rightarrow \infty} \int \vp \, d\mu_k +\ve \\
		& \leq \liminf_{k \rightarrow \infty} \mu_k(U) + \ve.
	\end{align*}
	The result follows since $\ve$ was arbitrary. 
	
\end{proof}

\begin{lemma}\label{l:converging}
	Suppose $\{\mu_k\}$ is a sequence of measures converging weakly to a Radon measure $\mu$. Suppose additionally there exists $C_0 >0$ such that each $\mu_k$ is $C_0$-Ahlfors $d$-regular (in the sense that it satisfies the upper and lower regularity condition with constant $C_0$, but may not be additive). Then, for any ball $B$, we have 
	\[
	\lim_{k \rightarrow \infty} \left( \sup_{p \in B \cap \emph{supp} \, \mu} \emph{dist}(p, \emph{supp} \, \mu_k) \right) = 0
	\]
	and
	\[
	\lim_{k \rightarrow \infty} \left(\sup_{p \in B \cap \emph{supp} \, \mu_k} \emph{dist}(p, \emph{supp} \, \mu) \right) = 0.
	\]
\end{lemma}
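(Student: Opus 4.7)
\textbf{Proof plan for Lemma \ref{l:converging}.} The plan is to argue each of the two conclusions by contradiction, extracting a convergent subsequence of offending points and then using the appropriate half of Lemma \ref{l:semicont} together with the Ahlfors regularity of the $\mu_k$ to reach a contradiction with the definition of $\supp \mu$ or with the fact that the limit measure vanishes on sets disjoint from its support.

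First I would handle the Hausdorff inclusion $\supp \mu \cap B \subseteq \supp \mu_k$ direction. Suppose not: then there exist $\delta > 0$, a subsequence (not relabeled), and points $p_k \in B \cap \supp \mu$ with $\dist(p_k, \supp \mu_k) \geq \delta$. Since $B$ is compact, pass to a further subsequence so that $p_k \to p$. Then $p \in B \cap \supp \mu$ (both $B$ and $\supp \mu$ are closed), and for $k$ large we have $|p - p_k| \leq \delta/2$, hence $\dist(p, \supp \mu_k) \geq \delta/2$ by the triangle inequality. In particular $\mu_k(B(p, \delta/4)^\circ) = 0$ for all large $k$. By the open-set half of Lemma \ref{l:semicont},
\[
\mu(B(p, \delta/4)^\circ) \leq \liminf_{k \to \infty} \mu_k(B(p, \delta/4)^\circ) = 0,
\]
contradicting $p \in \supp \mu$.

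For the reverse direction, suppose there exist $\delta > 0$ and $p_k \in B \cap \supp \mu_k$ with $\dist(p_k, \supp \mu) \geq \delta$, and (by compactness of $B$) extract $p_k \to p$. Since $\supp \mu$ is closed, the limit $p$ satisfies $\dist(p, \supp \mu) \geq \delta$, so the compact set $K := \overline{B(p, \delta/3)}$ is disjoint from $\supp \mu$ and thus has $\mu(K) = 0$. On the other hand, for $k$ large, $B(p_k, \delta/4) \subseteq K$, and the lower Ahlfors $d$-regularity of $\mu_k$ at $p_k \in \supp \mu_k$ gives $\mu_k(K) \geq \mu_k(B(p_k, \delta/4)) \geq C_0^{-1} (\delta/4)^d$ (valid once $\delta/4$ is below the diameter of $\supp \mu_k$, which is guaranteed for large $k$ because $\diam \supp \mu$ is positive and weak convergence together with lower regularity force $\diam \supp \mu_k$ to stay bounded below). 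The compact-set half of Lemma \ref{l:semicont} then yields
\[
\mu(K) \geq \limsup_{k \to \infty} \mu_k(K) \geq C_0^{-1} (\delta/4)^d > 0,
\]
contradicting $\mu(K) = 0$.

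The main (minor) obstacle is the bookkeeping around the allowed range of radii in the lower Ahlfors regularity condition: one must ensure the test radius $\delta/4$ lies below $\diam \supp \mu_k$ for all large $k$. This follows by a short preliminary observation using Lemma \ref{l:semicont} on a ball of definite size around a point of $\supp \mu$, which forces $\supp \mu_k$ to eventually carry mass in that ball and thus have uniformly positive diameter. Beyond that, the argument is a clean application of the two semicontinuity estimates already established.
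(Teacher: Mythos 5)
Your proof is correct; for the second limit it takes a genuinely different route from the paper. The first limit is handled in essentially the same way by both proofs: a point of $B\cap\supp\mu$ that stays $\delta$-far from $\supp\mu_k$ yields a small ball with positive $\mu$-mass but vanishing $\mu_k$-mass, contradicting weak convergence. The only differences are cosmetic: you invoke the open-set half of Lemma \ref{l:semicont} rather than testing directly against a bump function, and you more carefully allow the offending point to vary with $k$ before extracting a convergent subsequence via compactness of $B$ (the paper fixes a single $p$, which glosses over exactly this point). For the second limit the paper argues directly, following David--Semmes: cover $B$ by finitely many balls of radius $\ve$, use lower Ahlfors regularity of $\mu_k$ and convergence of the test integrals $\int\phi_i\,d\mu_k$ to conclude that any covering ball meeting $\supp\mu_k$ has its triple meeting $\supp\mu$, then let $\ve\to0$. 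Your contradiction argument -- a limit point $p$ with $\dist(p,\supp\mu)\geq\delta$, a compact set $K$ around $p$ disjoint from $\supp\mu$, and lower regularity together with the compact-set semicontinuity to force $\mu(K)>0$ -- is shorter and dispenses with both the covering and the bump functions. One caution applies to both arguments: they implicitly require the test radius ($\delta/4$ in yours, $\ve$ in the paper's) to lie below $\diam\supp\mu_k$. You flag this but your proposed justification is too weak: knowing that $\supp\mu_k$ eventually carries mass in one fixed ball does not by itself bound $\diam\supp\mu_k$ away from zero. The cleaner observation is that if $\supp\mu$ contains two distinct points $q_1,q_2$, the open-set semicontinuity forces $\supp\mu_k$ to eventually meet small disjoint balls around each, so $\liminf_k\diam\supp\mu_k>0$; one then shrinks the test radius below this bound, noting that $\mu(K)>0$ still follows.
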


\begin{proof}[Proof of Lemma \ref{l:converging}]
Let $\delta >0.$ Let $K \in \bN$ and suppose there exists $p \in B \cap \text{supp} \, \mu$ such that $\text{dist}(p, \text{supp} \, \mu_k) > \delta$ for all $k \geq K$. Let $\phi \in C_0(\R^n)$ be such that $0 \leq \phi \leq 1$, $\phi \equiv 1$ on $B(p,\delta/2)$ and $\text{supp} \, \phi \subset B(p,\delta).$ Since $p \in \text{supp} \, \mu,$
\[
\int \phi \, d\mu \geq \mu(B(p,\delta/2)) > 0,
\]
but
\[
\int \phi \, d\mu_k = 0
\]
for all $k \geq K,$ which gives a contradiction. 

The proof of the second equality is lifted verbatim from the proof of \cite[Lemma III.2.43]{of-and-on}. Let $\ve >0$ and $B_1, \dots, B_\ell$ be a finite collection of balls of radius $\ve$ which cover $B$. For $i = 1,\dots, \ell$, let $\phi_i \in C_0(\R^n)$ satisfy $\phi_i \equiv 1$ on $2B_i$ and $\text{supp} \, \phi_i \subseteq 3B_i.$ Choose $K$ large enough so that
\[
\left|\int \phi_i \, d\mu_k - \int \phi_i \, d\mu\right| \leq (2C_0)^{-1}\ve^d
\]
for all $k \geq K$ and $i = 1, \dots \ell.$ For each such $i$ and $k,$ if $B_i$ intersects $\text{supp} \, \mu_k$ then
\[
\int \phi_i \, d\mu_k \geq C_0^{-1} \ve^d
\]
hence
\[
\int \phi_i \, d\mu \geq (2C_0)^{-1}\ve^d,
\]
which in turn implies $3B_i$ intersects $\text{supp} \, \mu.$ Thus, if $p \in \text{supp} \, \mu_k$ for $k \geq K,$ then $\text{dist}(p,\text{supp} \, \mu) \leq 6\ve.$ Since $\ve$ was arbitrary this implies the second equality. 
\end{proof}

Finally, we must prove the following.

\begin{lemma}\label{l:subseq}
	Let $\{E_k\}$ be a sequence of $C_0$-Ahlfors $d$-regular sets in $\R^n$ and $\{\rho_k\}$ a sequence of positive real numbers such that $\rho_k \rightarrow 0.$ Let $\mu_k = \cH^d_{\rho_k}|_{E_k},$ then there exists sub-sequence $\{\mu_{k_j}\}$ and a Radon measure $\mu$ such that $\mu_{k_j} \rightharpoonup \mu.$
\end{lemma}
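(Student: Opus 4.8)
\textbf{Proof proposal for Lemma \ref{l:subseq}.}

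The plan is to use the standard weak-* compactness argument for Radon measures, but since the $\mu_k = \cH^d_{\rho_k}|_{E_k}$ are only monotone and countably subadditive (not additive), I must work with the Choquet integral throughout and verify that the usual compactness machinery still goes through. First I would reduce to the case where all the $E_k$ meet a fixed large ball: if infinitely many $E_k$ avoid $B(0,R)$ for every $R$, then along a subsequence the restrictions to any fixed ball are eventually zero and $\mu_k \rightharpoonup 0$; otherwise, after passing to a subsequence, every $E_k$ meets $B(0,R_0)$ for some fixed $R_0$, and by Ahlfors regularity $\mu_k(B(0,R)) \le C_0 R^d$ for all $R \ge R_0$, so the sequence is locally uniformly bounded.

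The key step is to extract the limit measure. I would proceed as in \cite[Chapter 1]{Mattila}: fix a countable dense family $\{\varphi_m\} \subseteq C_0(\R^n)$ (e.g. a countable uniformly dense subset, using separability of $C_0(\R^n)$ with the sup norm). Since $|\int \varphi_m \, d\mu_k| \lesssim \|\varphi_m\|_\infty \, \mu_k(\supp\varphi_m) \lesssim_{C_0,\varphi_m} 1$ uniformly in $k$ — here I use quasi-subadditivity (Lemma \ref{l:quasiadd}) to split $\varphi_m = \varphi_m^+ + (-\varphi_m^-)$ and bound each Choquet integral by a constant times $\mu_k$ of a fixed ball — a diagonal argument produces a subsequence $\{\mu_{k_j}\}$ along which $\lim_j \int \varphi_m \, d\mu_{k_j}$ exists for every $m$. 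By uniform density of $\{\varphi_m\}$ in $C_0(\R^n)$ and the Choquet estimate $|\int \varphi \, d\mu_{k_j} - \int \psi \, d\mu_{k_j}| \lesssim \|\varphi-\psi\|_\infty$ (on a fixed compact set containing all relevant supports — again via Lemma \ref{l:quasiadd}), the limit $L(\varphi) := \lim_j \int \varphi \, d\mu_{k_j}$ exists for \emph{every} $\varphi \in C_0(\R^n)$. The functional $L$ is positive and linear on $C_0(\R^n)$: linearity is the point requiring care, since $\int(\varphi+\psi)\,d\mu_{k_j} \ne \int\varphi\,d\mu_{k_j} + \int\psi\,d\mu_{k_j}$ in general for Choquet integrals.

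The main obstacle is precisely establishing linearity of $L$. My strategy is to show the defect vanishes in the limit: the Choquet integral \emph{is} additive on functions with disjoint supports and, more usefully, for any $\varphi,\psi \in C_0$ the difference $\int(\varphi+\psi)\,d\mu_{k_j} - \int\varphi\,d\mu_{k_j} - \int\psi\,d\mu_{k_j}$ can be controlled using that $\mu_{k_j} = \cH^d_{\rho_{k_j}}|_{E_{k_j}}$ with $\rho_{k_j}\to 0$: as the content parameter $\rho_k \to 0$, the measures $\cH^d_{\rho_k}|_{E_k}$ behave more and more like the genuinely additive $\cH^d|_{E_k}$ at any fixed scale, so the non-additivity defect at scale $\gtrsim 1$ tends to $0$. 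Concretely, I would fix $\delta>0$, partition the relevant region into pieces of size $\delta$, use finite additivity of $\cH^d_{\rho_k}$ on well-separated pieces together with the subadditive error from the boundary pieces (which is $O(\delta^{d-1})$ by Ahlfors regularity, hence negligible after also sending $\delta\to 0$), to conclude $L(\varphi+\psi) = L(\varphi)+L(\psi)$. Once $L$ is a positive linear functional, the Riesz Representation Theorem gives a Radon measure $\mu$ with $L(\varphi) = \int \varphi\,d\mu$, i.e. $\mu_{k_j}\rightharpoonup\mu$, completing the proof. (If the additivity-defect argument proves delicate, an alternative is to bypass linearity of $L$ on all of $C_0$ and instead directly define $\mu(U) := \sup\{L(\varphi): \varphi \prec U\}$ for open $U$, extend by outer regularity, and check Radon-ness by hand using Lemmas \ref{l:semicont} and \ref{l:converging} — but I expect the Riesz route to be cleanest.)
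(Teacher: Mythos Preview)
Your overall architecture is the paper's: diagonal argument on a countable dense subset of $C_0$, prove that the limiting functional $L$ is linear by exploiting that $\cH^d_{\rho_k}$ is genuinely additive on sets separated by more than $\rho_k$, then invoke Riesz. The paper does exactly this (Lemmas \ref{l:step}, \ref{l:linear}, \ref{l:continuity}), so your plan is on the right track.

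There is, however, a real gap in your boundary-error step. You assert that after partitioning into pieces of size $\delta$, the contribution of the boundary pieces is ``$O(\delta^{d-1})$ by Ahlfors regularity''. This is not true in general: the sets $E_k$ are $d$-dimensional and nothing prevents them from sitting on, or near, the dyadic grid $G^m$. For instance if $d=1$, $n=2$ and $E_k$ is a line segment lying along one of the grid lines, then $\mu_k$ of any neighbourhood of $G^m$ is of order $1$, not $o(1)$. Ahlfors regularity gives you control on $\mu_k$ of a single small ball, but the grid neighbourhood is covered by many such balls and the sum need not be small. The paper handles this by a genuinely additional idea: before doing anything with the $\mu_k$, it first passes to a subsequence along which the \emph{honest} Radon measures $\tilde\mu_k=\cH^d|_{E_k}$ converge weakly to some Radon $\tilde\mu$; then, since $\tilde\mu$ is Radon, a translation argument (Lemma \ref{lem:grid}) finds a shifted grid $G^m_x$ with $\tilde\mu(G^m_x\cap B(0,R))<\delta$, and weak convergence plus $\mu_k\le\tilde\mu_k$ pushes this smallness back to $\mu_k$ for large $k$. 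That shifted-grid step is what makes the boundary term negligible, and it is missing from your sketch.

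A smaller issue: your claimed $\|\cdot\|_\infty$-Lipschitz estimate $|\int\varphi\,d\mu_k-\int\psi\,d\mu_k|\lesssim\|\varphi-\psi\|_\infty$ does not follow directly from Lemma \ref{l:quasiadd}; the constant $1/\gamma$ in that lemma blows up as $\gamma\to 0$. The paper circumvents this by approximating from above by a \emph{decreasing} sequence in $D$ and passing to the limit in $k$ before sending $\gamma\to 0$ (Lemma \ref{l:continuity}). Your extension of $L$ from the dense set to all of $C_0$ therefore needs more care than you indicate.
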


Before proving Lemma \ref{l:subseq}, we need a series of lemmas. Let $\mathscr{I}$ the collection of Euclidean dyadic cubes in $\R^n$ and $\cI_m$ be those cubes in $\cI$ with side length $2^{-m},$ for $m \in \bZ.$ Let $G^m$ denote the dyadic grid at scale $m$, that is, 
\[
G^m = \bigcup_{I \in \cI_m} \partial I.
\]
For $x \in \R^n$, let $G_x^m = x + G^m$ denote the translate of the dyadic grid at scale $m$ by $x.$
\begin{lemma}\label{lem:grid}
	Let $\delta >0,$ $m \in \bN$, $R>0$ and $\mu$ a Radon measure. Then, there exists $x \in \R^n$ such that 
	\[
	\mu( G_x^m \cap B(0,R)) < \delta.
	\]
\end{lemma}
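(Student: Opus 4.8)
The plan is to prove Lemma \ref{lem:grid} by an averaging (pigeonhole) argument over translates of the dyadic grid. The key observation is that for a fixed scale $m$, as the translation parameter $x$ ranges over the fundamental domain $[0,2^{-m})^n$, each point $y \in \R^n$ lies on the translated grid $G_x^m$ for only a ``small'' set of translates $x$ — roughly, $y$ lies on some hyperplane of $G_x^m$ iff one of the coordinates of $y-x$ is a dyadic multiple of $2^{-m}$, which cuts out a set of measure zero in each coordinate but, more usefully, a set of small measure if we thicken slightly. The cleanest route, however, is to integrate: I would bound $\int_{[0,2^{-m})^n} \mu(G_x^m \cap B(0,R)) \, dx$ from above and conclude that the integrand must be small for some $x$.

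First I would fix the scale $m$, set $Q_0 = [0,2^{-m})^n$, and consider the quantity $I := \int_{Q_0} \mu(G_x^m \cap B(0,R)) \, dx$. Using the Choquet-integral definition, $\mu(G_x^m \cap B(0,R)) = \int_0^\infty \mu(\{y : \one_{G_x^m \cap B(0,R)}(y) > t\})\,dt$, but since the indicator only takes values $0$ and $1$ this is just the set function evaluated on $G_x^m \cap B(0,R)$; the subtlety is that $\mu$ need not be additive, so I cannot literally apply Fubini. Instead I would approximate $G^m$ from outside by a thin open neighbourhood $G^m_\varepsilon = \{y : \dist(y,G^m) < \varepsilon\}$ and use monotonicity. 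The point is that for each fixed $y$, the set of $x \in Q_0$ with $y \in G_x^m + B(0,\varepsilon)$ has Lebesgue measure $\lesssim \varepsilon \, 2^{-m(n-1)} \cdot n$ (a union of $n$ slabs of width $2\varepsilon$ in the fundamental domain), hence $\mathcal L^n$-measure at most $C_n \varepsilon 2^{-m(n-1)}$, which is a uniformly small fraction of $|Q_0| = 2^{-mn}$, namely $\le C_n \varepsilon 2^m$.

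The main obstacle is the lack of additivity of $\mu$, which prevents a direct Fubini argument on $I$. To get around this I would argue more directly without integrating the non-additive set function: by inner regularity of the Radon measure $\mu$, choose a compact $K \subseteq B(0,R)$ with $\mu(B(0,R)\setminus K) $ controlled, cover $K \cap$ (a neighbourhood of any hyperplane) efficiently, and instead bound $\mu(G^m_\varepsilon \cap B(0,R))$ for a single well-chosen $x$. Concretely: since $\mu$ is Radon and hence finite on $B(0,R+1)$, for each of the finitely many coordinate directions and each dyadic level one shows that only finitely many of the hyperplanes $\{y_i = c\}$ within $B(0,R)$ can carry $\mu$-mass exceeding $\delta/(2n)$ (at most $\lfloor 2nR^n \mu(B(0,R+1))/\delta \rfloor$ of them, since disjoint hyperplanes — thickened to genuinely disjoint slabs — have masses that, after the thickening is removed, would otherwise sum beyond $\mu(B(0,R+1))$ were $\mu$ subadditive, which it is). Then choose the translate $x$ so that every hyperplane of $G_x^m$ meeting $B(0,R)$ avoids all the finitely many ``heavy'' hyperplanes of every relevant grid; this is possible because the heavy hyperplanes form a finite set while the admissible translates form a set of positive measure. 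For such $x$, $G_x^m \cap B(0,R)$ is a finite union of at most $N_m \lesssim (R 2^m)^n$ pieces of hyperplanes each of $\mu$-mass $< \delta/(2n)$...

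Rather than push that delicate counting, I would actually use the thickening-and-averaging version, which sidesteps additivity entirely:

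\begin{proof}[Proof of Lemma \ref{lem:grid}]
Fix $m \in \bN$, $R>0$, and $\delta > 0$. Since $\mu$ is Radon, $\mu(B(0,R+1)) < \infty$. For $\varepsilon \in (0,1)$, let $G^m_\varepsilon = \{y \in \R^n : \dist(y,G^m)\le \varepsilon\}$, which is a closed set, and note $G^m = \bigcap_{\varepsilon>0} G^m_\varepsilon$ and $G^m_\varepsilon$ is decreasing in $\varepsilon$.

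Let $Q_0 = [0,2^{-m}]^n$. For a point $y \in B(0,R)$ and $\varepsilon < 2^{-m-1}$, observe that $y \in (x + G^m_\varepsilon)$ for some $x \in Q_0$ exactly when $x \in y - G^m_\varepsilon$, and
\[
\mathcal L^n\big( Q_0 \cap (y - G^m_\varepsilon) \big) \le n \cdot (2\varepsilon) \cdot 2^{-m(n-1)},
\]
since $Q_0 \cap (y-G^m)$ is contained in a union of $n$ coordinate slabs, one for each coordinate hyperplane family, each of width $2\varepsilon$ and cross-section $2^{-m(n-1)}$ inside $Q_0$. Thus for every $y \in \R^n$,
\[
\int_{Q_0} \one_{x + G^m_\varepsilon}(y) \, d\mathcal L^n(x) \le 2n\varepsilon \, 2^{-m(n-1)}.
\]

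Now cover $B(0,R)$ by finitely many disjoint Borel sets $A_1,\dots,A_p$ each of diameter less than $\varepsilon$ (for instance, dyadic cubes of small side length intersected with $B(0,R)$). For each $j$ fix a point $y_j \in A_j$. If $A_j \cap (x + G^m_\varepsilon) \ne \emptyset$, then $y_j \in (x + G^m_{2\varepsilon})$, since every point of $A_j$ is within $\varepsilon$ of $y_j$. Hence, writing $J(x) = \{ j : A_j \cap (x + G^m_\varepsilon) \ne \emptyset\}$ and using countable subadditivity of $\mu$,
\[
\mu\big( (x + G^m_\varepsilon) \cap B(0,R)\big) \le \sum_{j \in J(x)} \mu(A_j) \le \sum_{j=1}^p \one_{x + G^m_{2\varepsilon}}(y_j)\, \mu(A_j).
\]
Integrating over $x \in Q_0$ and applying the bound above (with $2\varepsilon$ in place of $\varepsilon$, valid once $4\varepsilon < 2^{-m-1}$) gives
\[
\int_{Q_0} \mu\big( (x + G^m_\varepsilon) \cap B(0,R)\big) \, d\mathcal L^n(x) \le \sum_{j=1}^p \mu(A_j) \int_{Q_0} \one_{x + G^m_{2\varepsilon}}(y_j)\, d\mathcal L^n(x) \le 4n\varepsilon\, 2^{-m(n-1)} \sum_{j=1}^p \mu(A_j).
\]
Since the $A_j$ are disjoint and contained in $B(0,R+1)$, countable subadditivity of $\mu$ gives $\sum_j \mu(A_j) \le$ ... actually subadditivity only gives $\mu\big(\bigcup A_j\big) \le \sum \mu(A_j)$, not the reverse.

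To remedy this, choose the $A_j$ to be the (finitely many) nonempty intersections of dyadic cubes of a fixed small side length $2^{-\ell} < \varepsilon$ with $B(0,R)$; since $\mu$ is a Radon measure it is a genuine (additive) measure, so $\sum_j \mu(A_j) = \mu\big( \bigcup_j A_j \big) \le \mu(B(0,R)) < \infty$. (Here we use that the hypotheses of Lemma \ref{lem:grid} take $\mu$ to be a Radon measure in the usual additive sense.)

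Therefore
\[
\int_{Q_0} \mu\big( (x + G^m_\varepsilon) \cap B(0,R)\big) \, d\mathcal L^n(x) \le 4n\varepsilon\, 2^{-m(n-1)}\, \mu(B(0,R)) = 2^{-mn} \cdot 4n\varepsilon\, 2^m\, \mu(B(0,R)).
\]
Since $|Q_0| = 2^{-mn}$, there exists $x \in Q_0$ with
\[
\mu\big( (x + G^m_\varepsilon) \cap B(0,R)\big) \le 4n\varepsilon\, 2^m\, \mu(B(0,R)).
\]
Choosing $\varepsilon$ small enough that the right-hand side is less than $\delta$, and noting $x + G^m \subseteq x + G^m_\varepsilon$ so that $\mu(G_x^m \cap B(0,R)) \le \mu\big((x+G^m_\varepsilon) \cap B(0,R)\big)$, completes the proof.
\end{proof}
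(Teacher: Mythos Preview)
Your proof is correct, but it takes a different and considerably longer route than the paper's. The paper argues by contradiction along the diagonal only: if every translate failed, then in particular all translates $x = \lambda(1,\dots,1)$ with $\lambda \in [0,2^{-m})$ would satisfy $\mu(G_x^m \cap B(0,R)) \geq \delta$; but any point of $\R^n$ lies on at most $n$ such diagonally translated grids (one per coordinate, since the $\lambda$'s are distinct modulo $2^{-m}$), so by additivity of the Radon measure $\mu$, summing over infinitely many distinct $\lambda$'s yields $\infty \leq n\,\mu(B(0,R)) < \infty$. (The paper states the overlap bound as $2$, which is only correct for $n \leq 2$; in general the bound is $n$, but the argument is unaffected.)

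Your $n$-dimensional averaging with thickening and discretization works, but once you invoke genuine additivity of $\mu$ --- as you eventually do to get $\sum_j \mu(A_j) = \mu(\bigcup_j A_j)$ --- most of the machinery is unnecessary: Fubini applied to $\mu \otimes \mathcal{L}^n$ gives directly
\[
\int_{Q_0} \mu(G_x^m \cap B(0,R))\,d\mathcal{L}^n(x) = \int_{B(0,R)} \mathcal{L}^n\big(Q_0 \cap (y - G^m)\big)\,d\mu(y) = 0,
\]
since $G^m$ is Lebesgue-null, and the conclusion follows for $\mathcal{L}^n$-a.e.\ $x$. What your longer argument does buy you is a quantitative estimate on the $\varepsilon$-thickened grid, which is precisely the content of the paper's next lemma (obtained there instead via continuity of $\mu$ on decreasing sets). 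As a matter of exposition, the preamble before your \texttt{proof} environment contains several abandoned attempts and self-corrections that should be trimmed.
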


\begin{proof}
	Assume the lemma is false. Let $x_0 = (1,1,\dots,1) \in \R^n.$ By assumption, we can find a sequence of distinct real numbers $0\leq \lambda_k \leq 2^{-m}$, such that
	\[
	\mu (G_{\lambda_k x_0}^m \cap B(0,R)) \geq \delta
	\]
	for each $k.$ Let $x_k = \lambda_kx_0.$ Notice that $G^m_{x_i} \cap  G^m_{x_j} \cap  G^m_{x_k} = \emptyset$ for $i\not=j\not=k,$ that is, the $G_{x_i}^m$ have bounded overlap. Hence, 
	\[
	\infty = \sum_{i=1}^\infty \mu( G^m_{x_i} \cap B(0,R)) \leq 2\mu(B(0,R)) \lesssim 1,
	\]
	which is a contradiction. 
\end{proof}

\begin{lemma}\label{l:nhood}
	Let $\delta>0, \ m \in \bN,$ $R >0$, $\mu$ be a Radon measure,  and $x \in \R^n.$ If $\mu(G_x^m \cap B(0,R)) < \delta,$ then there exists $\eta >0$ such that 
	\[
	\mu( G_x^m(\eta) \cap B(0,R)) < 2\delta,
	\]
	where $G_{x}^m(\eta)$ denotes the closed $\eta$-neighbourhood of $G_x^m.$
\end{lemma}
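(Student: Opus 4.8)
The plan is to exploit the fact that $G_x^m$ is a closed set and that $G_x^m(\eta)$ shrinks down to $G_x^m$ as $\eta\to 0$, together with the outer regularity/monotone-continuity properties of a Radon measure $\mu$. First I would observe that
\[
G_x^m = \bigcap_{\eta >0} G_x^m(\eta)^\circ \cap B(0,R+1),
\]
or more concretely, that the sets $K_\eta := G_x^m(\eta)\cap B(0,R)$ form a decreasing family of compact sets as $\eta \downarrow 0$ whose intersection is exactly $G_x^m\cap B(0,R)$ (note every point within distance $\eta$ of $G_x^m$ for all $\eta>0$ lies in the closed set $G_x^m$). Since $\mu$ is a Radon measure, it is finite on the compact set $B(0,R+1)$, so by continuity of $\mu$ along decreasing sequences of sets of finite measure,
\[
\lim_{\eta \to 0^+} \mu\big(G_x^m(\eta)\cap B(0,R)\big) = \mu\Big(\bigcap_{\eta>0} G_x^m(\eta)\cap B(0,R)\Big) = \mu\big(G_x^m \cap B(0,R)\big) < \delta.
\]
Hence for $\eta$ small enough the left-hand side is below $2\delta$, which is the claim. (In fact one gets the stronger bound that it is eventually below $\delta + \text{anything}$, but $2\delta$ suffices and is stated for later convenience.)

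A small technical point I would be careful about: the excerpt's ``measures'' are not assumed countably additive in general, but here $\mu$ is explicitly a \emph{Radon} measure, so the standard continuity-from-above property for decreasing sequences of sets contained in a fixed set of finite measure is available; alternatively, one can phrase the argument purely through outer regularity — pick an open set $V \supseteq G_x^m\cap B(0,R)$ with $\mu(V) < \delta$ (using $\mu(G_x^m\cap B(0,R))<\delta$ and outer regularity of $\mu$), and then note that since $G_x^m\cap B(0,R)$ is compact and $V$ is open, there is $\eta>0$ with $G_x^m(\eta)\cap B(0,R)\subseteq V$, giving $\mu(G_x^m(\eta)\cap B(0,R)) \le \mu(V) < \delta < 2\delta$. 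This second route avoids any reliance on additivity entirely and is the one I would write up, since it meshes with the paper's convention that ``measure'' need only be monotone and countably subadditive.

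I do not anticipate a genuine obstacle here — this is a routine compactness-plus-regularity argument — but the one place to take care is verifying the containment $G_x^m(\eta)\cap B(0,R)\subseteq V$ from compactness: if no such $\eta$ existed, one could extract a sequence $y_\eta \in G_x^m(\eta)\cap B(0,R)\setminus V$, pass to a convergent subsequence $y_{\eta_k}\to y_\infty \in B(0,R)\setminus V$ (using that $\R^n\setminus V$ is closed and $B(0,R)$ compact), and conclude $y_\infty \in G_x^m$ since $\mathrm{dist}(y_{\eta_k}, G_x^m)\le \eta_k \to 0$ and $G_x^m$ is closed — contradicting $y_\infty\notin V\supseteq G_x^m\cap B(0,R)$. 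That finishes the proof.
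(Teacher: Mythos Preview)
Your proposal is correct and your first route---continuity of the Radon measure $\mu$ along the decreasing family $G_x^m(\eta)\cap B(0,R)$ as $\eta\downarrow 0$---is exactly the paper's argument, which is stated in one line as ``take a sequence $\eta_j\downarrow 0$ and use the continuity property of $\mu$ on decreasing sequences of sets.'' Your alternative via outer regularity and compactness is also valid and, as you note, even yields the slightly sharper bound $\mu(G_x^m(\eta)\cap B(0,R))<\delta$; this is a harmless variation on the same idea rather than a genuinely different approach.
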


\begin{proof}
	This simply follows by taking a sequence $\eta_j \downarrow 0$ and using the continuity property of $\mu$ on decreasing sequences of sets. 
\end{proof}
Let $\mu_k$ be as in Lemma \ref{l:subseq} and set $\tilde{\mu}_k = \cH^d|_{E_k}.$ Note, $\mu_k \leq \tilde{\mu}_k$ for each $k \in \bN.$ Since each $\tilde{\mu}_k$ is a Radon measure and $\sup_k \tilde\mu_k(K) < \infty$ for all compact $K \subseteq \R^n$ (by virtue of the $E_k$ being $C_0$-Ahlfors $d$-regular), we are able to extract a weakly convergent subsequence. Therefore, without loss of generality, we may assume $\tilde{\mu}_k \rightharpoonup \tilde{\mu}$ to some Ahlfors $d$-regular Radon measure $\tilde{\mu}.$

For $i \in \bN,$ let $\phi_i$ be a $C^\infty$-bump function so that $0 \leq \phi_i \leq 1, \ \phi_i \equiv 1$ on $B(0,i)$ and $\text{supp} \, \phi_i \subseteq B(0,i+1).$ Let
\begin{align*}
	D' = \{P\phi_i : P \ \text{is a non-negative polynomial}& \\ &\hspace{-4em}\text{with rational coefficients and} \ i \in \bN \}
\end{align*}
and let $D$ be the set of all rational finite linear combinations of $D'.$ By the Weierstrass Approximation Theorem, it follows that $D'$ forms a countable dense subset of $C_0^+(\R^n)$ under $||\cdot||_\infty.$  Clearly then, this is also true for $D$.

Let $\phi \in D.$ Since $E_k$ is $C_0$-Ahlfors $d$-regular for each $k \in \bN,$ it follows that 
$$\mu_k(\text{supp} \, \phi) \leq \tilde{\mu}_k(\text{supp} \, \phi) \leq C_0 (\text{diam}( \text{supp} \, \phi)/2)^d <\infty.$$
Then, since $\phi$ is bounded, we can extract a convergent subsequence of $\{\int \phi \,  d\mu_k\}.$ We claim we can extract a further subsequence so that 
\[
L\phi = \lim_{k \rightarrow \infty} \int \phi \, d\mu_k \ \text{exists for all} \ \phi \in D. 
\]
This follows by a diagonal argument: enumerate $D=\{\phi_{1},...\}$. Pick  a subsequence $n_{k}^{1}$ so that $\int \phi_{1}d\mu_{n_{k}^{1}}$ converges. Now pick a subsequence $n_{k}^{2}$ of $n_{k}^{1}$ so that $\int \phi_{2}d\mu_{n_{k}^{2}}$ converges, and inductively, given a subsequence $n_{k}^{j}$, pick  a subsequence $n_{k}^{j+1}$ of this sequence so that $\int \phi_{j+1} d\mu_{n_{k}^{j+1}}$ converges. Now set $n_{k}=n_{k}^{k}$. Then for each $j$, $n_{k}^{k}$ is a subsequence of $n_{k}^{j}$ for $k>j$, and the limit above converges for every $\phi_{j}$, which proves the claim.

We will show that $L$ defines a linear functional on all of $C_0(\R^n)$. We first treat the case of non-negative functions.

\begin{lemma}\label{l:step}
	Let $k \geq 0$ and $N \geq 1.$ For a function $\phi$ of the form 
	\[
	\phi = \sum_{j = 1}^N a_j \mathds{1}_{A_j}
	\]
	where $a_j \geq 0$ and $A_j \subseteq \R^n$ are such that $\emph{dist}(A_i,A_j) \geq 2\rho_k$, we have 
	\[
	\int \phi  \, d\mu_k = \sum_{j=1}^N a_j \mu_k(A_j).  
	\]
\end{lemma}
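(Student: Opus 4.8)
The plan is to compute the Choquet integral of the simple function $\phi = \sum_{j=1}^N a_j \mathds{1}_{A_j}$ directly from the definition via the layer-cake formula, and to exploit the separation hypothesis $\operatorname{dist}(A_i,A_j) \geq 2\rho_k$ together with the structure of $\mu_k = \cH^d_{\rho_k}|_{E_k}$ to split any covering into pieces that each meet at most one $A_j$. The key point is that $\cH^d_{\rho_k}$ only allows coverings by sets of diameter at most $\rho_k$, and a set of diameter at most $\rho_k < 2\rho_k$ cannot meet two of the $A_j$ simultaneously; hence $\cH^d_{\rho_k}$ is \emph{finitely additive} on finite unions of the $A_j$, i.e. $\mu_k\bigl(\bigcup_{j\in S} A_j\bigr) = \sum_{j\in S}\mu_k(A_j)$ for any $S \subseteq \{1,\dots,N\}$.

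First I would establish this additivity claim: given an optimal (or near-optimal) cover $\{U_i\}$ of $\bigcup_{j\in S} A_j$ by sets with $\operatorname{diam} U_i \leq \rho_k$, partition the index set of the $U_i$ according to which $A_j$ each $U_i$ meets — this partition is well-defined because each $U_i$ meets at most one $A_j$ (discarding any $U_i$ meeting none) — and observe that the sub-collection assigned to $A_j$ covers $A_j\cap E_k$. Summing gives $\sum_i (\operatorname{diam} U_i)^d \geq \sum_{j\in S} \cH^d_{\rho_k}(A_j \cap E_k)$, so $\mu_k(\bigcup_{j\in S}A_j) \geq \sum_{j\in S}\mu_k(A_j)$; the reverse inequality is just countable subadditivity. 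Then I would plug this into the layer-cake formula: for $t \geq 0$ the superlevel set $\{\phi > t\} = \bigcup\{A_j : a_j > t\}$, so $\mu_k(\{\phi > t\}) = \sum_{j : a_j > t} \mu_k(A_j)$ by the additivity just proved, and integrating in $t$ from $0$ to $\infty$ gives $\int \phi\, d\mu_k = \sum_{j=1}^N \mu_k(A_j)\int_0^\infty \mathds{1}_{\{a_j > t\}}\, dt = \sum_{j=1}^N a_j \mu_k(A_j)$, which is the claim.

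I expect the main (and only real) obstacle to be the careful justification that a near-optimal cover can be taken to consist of sets of diameter \emph{strictly} less than $2\rho_k$ so that each piece meets at most one $A_j$ — one can simply use that coverings in the definition of $\cH^d_{\rho_k}$ use sets of diameter $\leq \rho_k$, and $\rho_k < 2\rho_k$, so this is immediate, though one must handle the $t = a_j$ boundary values (a measure-zero set of $t$, irrelevant to the integral) and the possibility that some $a_j$ coincide (harmless: group equal coefficients). Everything else is bookkeeping with the layer-cake representation and finite sums, so no delicate estimates are needed.
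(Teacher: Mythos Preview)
Your proposal is correct and follows essentially the same approach as the paper: first establish finite additivity of $\mu_k$ on subfamilies of the $A_j$ via the separation hypothesis (coverings by sets of diameter $\leq \rho_k$ meet at most one $A_j$), then feed this into the layer-cake formula. The only cosmetic difference is that the paper orders the $a_j$ and evaluates $\int_0^\infty \mu_k(\{\phi>t\})\,dt$ by splitting into intervals $[a_{j-1},a_j]$ and telescoping, whereas you swap the finite sum and the $dt$-integral directly; these are equivalent computations.
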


\begin{proof}
	We claim $\mu_k$ is additive on any subset of $\{A_j\},$ i.e. for any $\mathscr{C} \subseteq \{A_j\},$ we have
	\[
	\mu_k \left( \bigcup_{A_j \in \mathscr{C}} A_{j} \right) = \sum_{A_j \in \mathscr{C}} \mu_k(A_{j}). 
	\]
	Let $\mathscr{C} \subseteq \{A_j \}.$ The forward inequality is immediate by sub-additivity. To prove the reverse inequality, let $\ve >0$ and suppose $\mathscr{U}$ is a countable cover for $\bigcup_{A_j \in \mathscr{C}}A_j$ such that $\text{diam}(U) \leq \rho_k$ for each $U \in \mathscr{U}$ and
	\[
	\sum_{U \in \mathscr{U}} \text{diam}(U)^d \leq \mu_k \left( \bigcup_{A_j \in \mathscr{C}} A_j \right) + \ve. 
	\]
	Since the $A_j$ are separated by $2\rho_k$ and $\text{diam}(U) \leq \rho_k$, each $U$ intersects only a single $A_j.$ Hence, the sets $\mathscr{U}_j = \{U : U \cap A_j \not=\emptyset\}$ form a partition of $\mathscr{U}.$ Then,
	\[
	\sum_{A_j \in \mathscr{C}} \mu_k(A_j) \leq \sum_{A_j \in \mathscr{C}} \sum_{U\in \mathscr{U}_j} \text{diam}(U)^d = \sum_{U \in \mathscr{U}} \text{diam}(U)^d  \leq \mu_k \left( \bigcup_{A_j \in \mathscr{C}} A_j \right) + \ve,
	\]
	which proves the claim. 
	
	Let $\phi$ be as above. We may assume $a_{j+1} \geq a_j$ for all $j=0,1,\dots$, where we define $a_0=0.$ Then
	\begin{align*}
		\int \phi \, d\mu_k &= \int_0^\infty \mu_k (\{x \in \R^n : \phi(x) > t \} ) \, dt \\
		&= \sum_{j=1}^N \int_{a_{j-1}}^{a_j} \mu_k ( \{ x \in \R^n : \phi(x) > t \}) \, dt \\
		&= \sum_{j=1}^N (a_j - a_{j-1}) \mu_k \left( \bigcup_{i = j}^N A_i \right) = \sum_{j=1}^N (a_j -a_{j-1}) \sum_{i=j}^N\mu_k(A_i) \\
		&=  \sum_{i=1}^N\mu_k(A_i)  \sum_{j=1}^i (a_j -a_{j-1})  =\sum_{i=1}^N a_i \mu_k(A_i). 
	\end{align*} 
\end{proof}
As an immediate consequence of Lemma \ref{l:step}, we get the following.
\begin{corollary}\label{c:additivity}
	Let $k \geq0$ and $N \geq 1.$ Suppose $\phi$ and $\vp$ are functions of the form 
	\[
	\phi = \sum_{j = 1}^N a_j \mathds{1}_{A_j} \quad \text{and} \quad \vp = \sum_{j = 1}^N a^\prime_j \mathds{1}_{A_j}
	\]
	where $a_j,a_j^\prime \geq 0$ and $A_j \subseteq \R^n$ are such that $\emph{dist}(A_i,A_j) \geq 2\rho_k.$ Then 
	\[
	\int (\phi + \vp) \, d\mu_k = \int \phi \, d\mu_k + \int \vp \, d\mu_k. 
	\]
\end{corollary}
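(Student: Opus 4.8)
The plan is to reduce the statement to a single application of Lemma \ref{l:step}. The key observation is that the sum $\phi+\vp$ is again a function of exactly the type considered in that lemma: since
\[
\phi + \vp = \sum_{j=1}^N (a_j + a_j')\mathds{1}_{A_j},
\]
the coefficients $a_j + a_j'$ are still non-negative, and the sets $A_j$ are unchanged, so they still satisfy $\dist(A_i,A_j)\geq 2\rho_k$ for $i\neq j$. (This separation forces the $A_j$ to be pairwise disjoint, so the displayed expression is unambiguous and genuinely of the required form.)

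It then remains to apply Lemma \ref{l:step} three times — once to $\phi+\vp$, once to $\phi$, and once to $\vp$ — and use the (ordinary, finite) linearity of the sum:
\[
\int (\phi+\vp)\, d\mu_k = \sum_{j=1}^N (a_j + a_j')\mu_k(A_j) = \sum_{j=1}^N a_j\mu_k(A_j) + \sum_{j=1}^N a_j'\mu_k(A_j) = \int \phi\, d\mu_k + \int \vp\, d\mu_k.
\]
There is no real obstacle here; the only point to verify is that the hypotheses of Lemma \ref{l:step} are inherited by $\phi+\vp$, which is immediate from the fact that $\phi$ and $\vp$ are expressed over the \emph{same} separated family $\{A_j\}$. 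If desired, the identity extends by an obvious induction to any finite sum $\sum_i \phi_i$ of functions built from a common separated family, but the two-summand case is all that will be needed in what follows.
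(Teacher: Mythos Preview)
Your proof is correct and matches the paper's approach: the paper states this as an ``immediate consequence of Lemma \ref{l:step}'' without further detail, and your argument---observing that $\phi+\vp=\sum_j(a_j+a_j')\mathds{1}_{A_j}$ is again of the admissible form and then applying Lemma \ref{l:step} three times---is precisely the intended one-line deduction.
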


\begin{lemma}\label{l:linear}
	Let $\phi,\vp \in D,$ such that $\phi,\vp \geq 0.$ Then 
	\[
	L(\phi + \vp) = L\phi + L\vp. 
	\]
\end{lemma}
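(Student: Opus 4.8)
The plan is to deduce the additivity of $L$ on the nonnegative elements of $D$ from the additivity of the Choquet integral $\int\cdot\,d\mu_k$ on simple functions supported on pairwise well--separated sets, which is exactly Corollary~\ref{c:additivity}. Fix $\phi,\vp\in D$ with $\phi,\vp\ge 0$ and let $\ve>0$. I will construct, depending on $\ve$, nonnegative simple functions $\phi_m,\vp_m$ and an index $K$ so that, for all $k\ge K$,
\[
\int(\phi_m+\vp_m)\,d\mu_k=\int\phi_m\,d\mu_k+\int\vp_m\,d\mu_k,
\]
while $\bigl|\int\phi\,d\mu_k-\int\phi_m\,d\mu_k\bigr|$, $\bigl|\int\vp\,d\mu_k-\int\vp_m\,d\mu_k\bigr|$ and $\bigl|\int(\phi+\vp)\,d\mu_k-\int(\phi_m+\vp_m)\,d\mu_k\bigr|$ are all bounded by $C\ve$ with $C$ depending only on $C_0$, $\|\phi\|_\infty$, $\|\vp\|_\infty$ and the sizes of the supports of $\phi,\vp$, but not on $\ve$ or $k$. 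Chaining these three estimates with the displayed identity and letting $k\to\infty$ gives $|L(\phi+\vp)-L\phi-L\vp|\le C'\ve$; since $\ve$ is arbitrary, the lemma follows.

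For the construction, fix $R$ with $\text{supp}\,\phi\cup\text{supp}\,\vp\subseteq B(0,R-1)$, and by uniform continuity choose $m\in\bN$ so that $\phi$ and $\vp$ oscillate by at most $\ve$ on every dyadic cube of side $2^{-m}$. Recall $\tilde\mu_k=\cH^d|_{E_k}\rightharpoonup\tilde\mu$ with $\tilde\mu$ a Radon measure. Apply Lemma~\ref{lem:grid} to $\tilde\mu$ to obtain a translated grid $G^m_x$ with $\tilde\mu(G^m_x\cap B(0,R))<\ve$, then Lemma~\ref{l:nhood} to obtain $\eta>0$ with $\tilde\mu(G^m_x(\eta)\cap B(0,R))<2\ve$. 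Since $G^m_x(\eta)\cap B(0,R)$ is compact, the compact case of Lemma~\ref{l:semicont} gives $\limsup_k\tilde\mu_k(G^m_x(\eta)\cap B(0,R))<2\ve$, and since $\mu_k\le\tilde\mu_k$ by \eqref{e:Hdecreasing} and $\rho_k\to 0$, there is $K$ with $\mu_k(G^m_x(\eta)\cap B(0,R))<2\ve$ and $\rho_k<\eta$ for all $k\ge K$. For each cube $I$ of side $2^{-m}$ of this grid set $I'=\{y\in I:\text{dist}(y,G^m_x)\ge\eta\}$; since a segment joining points of two distinct shrunken cubes must cross $G^m_x$, we get $\text{dist}(I',J')\ge 2\eta>2\rho_k$ for $I\ne J$ and $k\ge K$. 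Enumerating as $I_1,\dots,I_N$ those cubes contained in $B(0,R)$, set $\phi_m=\sum_{j=1}^N(\inf_{I_j}\phi)\,\one_{I_j'}$ and $\vp_m=\sum_{j=1}^N(\inf_{I_j}\vp)\,\one_{I_j'}$. These are nonnegative simple functions supported on the family $\{I_j'\}$, which is $2\rho_k$--separated for $k\ge K$, so Corollary~\ref{c:additivity} yields the displayed identity.

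For the error bounds, note that each pair $(f,s)\in\{(\phi,\phi_m),(\vp,\vp_m),(\phi+\vp,\phi_m+\vp_m)\}$ satisfies, pointwise,
\[
0\le f-s\le 2\ve\,\one_{B(0,R)}+\bigl(\|\phi\|_\infty+\|\vp\|_\infty\bigr)\,\one_{G^m_x(\eta)\cap B(0,R)}
\]
(off the grid neighbourhood the difference is at most an oscillation, on it $s$ may vanish while $f$ stays bounded, and off $B(0,R)$ both vanish). Thus $\int f\,d\mu_k\ge\int s\,d\mu_k$ by monotonicity, and for the reverse inequality I use the layer--cake formula together with the inclusion $\{f>t\}\subseteq\{s>t-2\ve\}\cup\bigl(G^m_x(\eta)\cap B(0,R)\bigr)$, valid for $t>0$. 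Splitting $\int_0^{\|f\|_\infty}\mu_k(\{f>t\})\,dt$ at $t=2\ve$, bounding $\mu_k(\{f>t\})\le\mu_k(B(0,R))$ on the small piece and using the substitution $u=t-2\ve$ together with $\mu_k(G^m_x(\eta)\cap B(0,R))<2\ve$ on the other, gives, for $k\ge K$,
\[
\int f\,d\mu_k\le\int s\,d\mu_k+2\ve\,\mu_k(B(0,R))+2\ve\,\|f\|_\infty\le\int s\,d\mu_k+C\ve,
\]
since $\mu_k(B(0,R))\le C_0(2R)^d$ by Ahlfors regularity. Feeding the three comparisons into the Corollary~\ref{c:additivity} identity gives $\bigl|\int(\phi+\vp)\,d\mu_k-\int\phi\,d\mu_k-\int\vp\,d\mu_k\bigr|\le C'\ve$ for $k\ge K$; as $\phi,\vp,\phi+\vp\in D$, all three Choquet integrals converge as $k\to\infty$, so $|L(\phi+\vp)-L\phi-L\vp|\le C'\ve$.

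The main obstacle --- and the reason for the layer--cake argument rather than a one--line appeal to Lemma~\ref{l:quasiadd} --- is the failure of (sub)additivity of the Choquet integral: splitting $\int\bigl(s+(f-s)\bigr)\,d\mu_k$ by quasi--subadditivity would put a factor $1/\gamma$ in front of $\int s\,d\mu_k$ that cannot be absorbed, so the error would not vanish in the limit. Passing to super--level sets turns the comparison into a genuine estimate whose only losses are an oscillation term and the mass of a thin grid neighbourhood, both $\lesssim\ve$. A secondary point that must be handled carefully is transferring the smallness of $\tilde\mu$ on that neighbourhood to a uniform--in--$k$ smallness of $\mu_k$ there, which is where compactness and the upper semicontinuity half of Lemma~\ref{l:semicont}, together with $\mu_k\le\tilde\mu_k$, enter.
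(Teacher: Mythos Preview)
Your proof is correct and follows the same overall architecture as the paper: pick a translated dyadic grid whose closed $\eta$--neighbourhood has small $\tilde\mu$--mass in $B(0,R)$, transfer this to $\mu_k$ via $\mu_k\le\tilde\mu_k\rightharpoonup\tilde\mu$, approximate $\phi,\vp$ by simple functions constant on the shrunken cubes $I_j'$, invoke Corollary~\ref{c:additivity} on the $2\rho_k$--separated family $\{I_j'\}$, and bound the approximation error by the oscillation plus the mass of the grid neighbourhood.

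The genuine difference is in how you pass between $\int f\,d\mu_k$ and $\int s\,d\mu_k$. The paper uses the quasi--subadditivity Lemma~\ref{l:quasiadd} with an auxiliary parameter $\gamma\in(0,1)$: it places the factor $\tfrac{1}{1-\gamma}$ in front of the main term and $\tfrac{1}{\gamma}$ in front of the small error term, chooses the oscillation scale $\alpha$ and grid mass $\delta$ so that $\gamma\ve\gtrsim \alpha+\delta$, and finally sends $\ve\to 0$ and $\gamma\to 0$. Your layer--cake inclusion $\{f>t\}\subseteq\{s>t-2\ve\}\cup\bigl(G^m_x(\eta)\cap B(0,R)\bigr)$ bypasses this entirely and gives a clean one--parameter bound $\int f\,d\mu_k\le\int s\,d\mu_k+C\ve$. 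This is a tidier route: it avoids Lemma~\ref{l:quasiadd} altogether and the double limit. Your closing remark is slightly too strong, though --- the paper \emph{does} make quasi--subadditivity work by putting $\tfrac{1}{1-\gamma}$ (not $\tfrac{1}{\gamma}$) on the main term and coupling the small parameters to $\gamma$; it is not that the approach fails, only that it carries an extra parameter your argument does not need.
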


\begin{proof}
	Let $\alpha >0$ (to be chosen small later) and choose $m =m(\alpha)$ large enough so that if $x,y \in I \in \cI_m$ then $|\phi(x) - \phi(y)| \leq \alpha$ and $|\vp(x) - \vp(y)| \leq \alpha.$ This is possible since $\phi$ and $\vp$ are $C^\infty$ function with compact support and so they have bounded derivatives. Set 
	\[
	M = \sup_{x \in \R^n} \max\{\phi(x),\vp(x)\}
	\]
	and let $R> 0$ be such that $\supp \phi,\supp \vp \subseteq B(0,R).$ For each $k \geq 0,$ since $E_k$ is $C_0$-Ahlfors regular, we have
	\begin{equation}
		\label{e:mu_k<C_0R^d}
		\mu_k(B(0,R)) \leq C_0R^d.
	\end{equation}
	Let $\delta >0$ (to be chosen small enough later). Recall the definition of $\tilde{\mu},$ just after the statement of Lemma \ref{l:nhood}. By Lemma \ref{lem:grid}, we can find a translate of the dyadic grid $G^m = G_x^m$ such that 
	\[
	\tilde{\mu}( G^m \cap B(0,R)) < \delta.
	\]
	By Lemma \ref{l:nhood}, we can choose $\eta >0$ small enough so that 
	\[
	\tilde{\mu}( G^m(2^{-m} \eta ) \cap B(0,R)) < 2\delta.
	\]
	Since $\tilde{\mu}$ is the weak limit of the $\tilde{\mu}_k,$ there exist $K = K(\delta)$ such that for $k \geq K$, 
	\begin{align*}
		\tilde{\mu}_k( G^m(2^{-m} \eta ) \cap B(0,R)) &< \limsup_{n \rightarrow \infty}\tilde{\mu}_n( G^m(2^{-m} \eta ) \cap B(0,R)) +\delta \\
		&\leq \tilde{\mu}(G^m(2^{-m} \eta) \cap B(0,R))+\delta \\
		&\leq 3\delta.
	\end{align*}
	Since $\mu_k \leq \tilde{\mu}_k$ for all $k,$ this remains true for the $\mu_k,$ that is, for $k \geq K,$
	\begin{equation}\label{e:mu_kG^m<3delta}
		\mu_k(G^m(2^{-m}\eta) \cap B(0,R)) \leq 3\delta.
	\end{equation}
	For $I \in \cI,$ let $\phi_I = \phi \mathds{1}_{(1-\eta)I}.$ We can write
	\[
	\phi = \sum_{I \in \cI_m} \phi_I + \left(\phi - \sum_{I \in \cI_m} \phi_I\right) = \sum_{I \in \cI_m} \phi_I + \phi_{ G}. 
	\]
	Notice that $\phi_{ G}$ is supported on $ G^m(2^{-m} \eta) \cap B(0,R).$ Define also $\tilde{\phi}_I : \R^n \rightarrow \R,$ such that
	\[
	\tilde{\phi}_I(x) = \inf_{y \in I} \phi(y) \one_{(1-\eta)I}(x) .
	\]
	By our choice of $m$, $|\phi_I - \tilde{\phi}_I|  \leq \alpha$ for each $I \in \cI_m.$ Hence
	\[
	\phi \leq \sum_{I \in \cI_m} (\tilde{\phi}_I + \alpha \one_{(1-\eta)I \cap B(0,R)}) + \phi_{G} \leq \alpha \mathds{1}_{B(0,R)} +  \sum_{I \in \cI_m} \tilde{\phi}_I + \phi_{G}
	\]
	Similarly, we define $\vp_I, \ \tilde{\vp}_I$ and $\vp_{ G},$ to get 
	\[
	\vp \leq \alpha \mathds{1}_{B(0,R)} + \sum_{I \in \cI_m} \tilde{\vp}_I + \vp_{G}.  
	\]
	For any $I,I^\prime \in \cI_m,$ we have 
	\[\text{dist}((1-\eta)I,(1-\eta)I^\prime) \geq 2^{-m}\eta.\]
	Thus, for $k$ large enough so that $2\rho_k \leq 2^{-m}\eta$, by Corollary \ref{c:additivity},  
	\begin{align}\label{e:add}
		\int \sum_{I \in \cI_m} \tilde{\phi}_I \, d\mu_k + \int \sum_{I \in \cI_m} \tilde{\vp}_I \, d\mu_k = \int \left(\sum_{I \in \cI_m} \tilde{\phi}_I + \sum_{I \in \cI_m} \tilde{\vp}_I \right)\, d\mu_k.
	\end{align}
	Let $\ve >0,$ $\gamma \in (0,1)$ and suppose $\alpha$ and $\delta$ have been chosen small enough so that $\gamma \ve \geq 4C_0 R^d\alpha + 12M\delta.$ Using the above combined with Lemma \ref{l:quasiadd} we can find $K = K(\eta,\delta,m)$ such that for $k \geq K,$
	\begin{align*}
		\int \phi \, d\mu_k + \int \vp \, d\mu_k 
		& \stackrel{\eqref{e:subadd}}{\leq} \frac{1}{1-\gamma} \left[ \int \sum_{I \in \cI_m} \tilde{\phi}_I \, d\mu_k + \int \sum_{I \in \cI_m} \tilde{\vp}_I \, d\mu_k\right]  \\
		& + \frac{1}{\gamma} \left[\int (\alpha\one_{B(0,R)} + \phi_{ G}) \, d\mu_k +\int (\alpha\one_{B(0,R)} + \vp_{G} )\, d\mu_k \right] \\
		& \stackrel{\eqref{e:add} \atop \eqref{e:subadd}, (\gamma =1/2)}{\leq}\frac{1}{1-\gamma} \left[ \int \left(\sum_{I \in \cI_m} \tilde{\phi}_I + \sum_{I \in \cI_m} \tilde{\vp}_I \right)\, d\mu_k\right] \\
		& + \frac{2}{\gamma}\left[ 2\int \alpha\mathds{1}_{B(0,R)} \, d\mu_k + \int \phi_{ G} d\mu_k + \int \vp_{ G} \, d\mu_k \right] \\
		&\leq \frac{1}{1-\gamma} \int (\phi + \vp )\, d\mu_k \\
		& + \frac{2}{\gamma}\left[ 2\alpha\mu_k(B(0,R)) + 2M\mu_k(G^m(2^{-m} \eta) \cap B(0,R)) \right] \\
		&\stackrel{\eqref{e:mu_k<C_0R^d} \atop \eqref{e:mu_kG^m<3delta}}{\leq} \frac{1}{1-\gamma} \int (\phi + \vp) \, d\mu_k + \frac{2}{\gamma} \left[ 2C_0R^d\alpha + 6M\delta \right] \\
		&\leq \frac{1}{1-\gamma} \int (\phi + \vp) \, d\mu_k  + \ve. 
	\end{align*}
	
	\begingroup
	\allowdisplaybreaks
	
	On the other hand
	\begin{align*}
		\int (\phi + \vp) \, d\mu_k &\leq \int ( 2\alpha\mathds{1}_{B(0,R)} + \sum_{I \in \cI_m} \tilde{\phi}_I + \phi_{G} + \sum_{I \in \cI_m} \tilde{\vp}_I + \vp_{G} )  \, d\mu_k \\
		& \stackrel{\eqref{e:subadd}}{\leq} \frac{1}{1-\gamma}\int \ps{ \sum_{I \in \cD_m} \tilde{\phi}_I + \sum_{I \in \cI_m} \tilde{\vp}_I }\, d\mu_k  \\
		& \hspace{1em} + \frac{1}{\gamma} \int( 2\alpha \mathds{1}_{B(0,R)} + \phi_G + \vp_G) \, d\mu_k \\
		& \stackrel{\eqref{e:subadd}, (\gamma = 1/2)}{\leq}  \frac{1}{1-\gamma} \left[ \int \sum_{I \in \cI_m} \tilde{\phi}_I \, d\mu_k + \int \sum_{I \in \cI_m} \tilde{\vp}_I \, d\mu_k\right]  \\
		& \hspace{1em} +\frac{2}{\gamma} \left[  \int 2\alpha \mathds{1}_{B(0,R)} \, d\mu_k + \int (\phi_G + \vp_G )\, d\mu_k \right] \\
		& \leq \frac{1}{1-\gamma} \left[ \int \phi \, d\mu_k + \int \vp \, d\mu_k\right]   \\
		& \hspace{1em}  +\frac{2}{\gamma}\left[ 2\alpha\mu_k(B(0,R)) + 2M\mu_k(G^m(2^{-m}\eta ) \cap B(0,R)) \right] \\
		& \leq  \frac{1}{1-\gamma} \left[ \int \phi \, d\mu_k + \int \vp \, d\mu_k\right]   + \ve. 
	\end{align*}
	Taking $k \rightarrow \infty$ in the previous two sequences of inequalities, we get 
	\[ L(\phi) + L(\vp) \leq \frac{1}{1-\gamma} L(\phi + \vp) + \ve \]
	and 
	\[ L(\phi + \vp) \leq \frac{1}{1-\gamma} \left(L(\phi) + L(\vp) \right) + \ve \]
	Thus, taking $\ve, \gamma \rightarrow 0$ we have
	\[ L(\phi + \vp) = L(\phi) + L(\vp) \]
	and this finishes the proof.
\end{proof}

\endgroup

\begin{lemma}\label{l:continuity}
	Let $f \in C_0^+(\R^n)$ and $R > 0$ be such that $\supp f \subseteq B(0,R).$  Suppose $\{f_i\}$ is a decreasing sequence of functions in $D$ such that $f_i(x) \geq f(x)$ for all $x \in \R^n$, $\supp f_i \subseteq B(0,R+1)$ and $f_i \rightarrow f$ in $L^\infty.$ Then the limit 
	\[
	L(f) = \lim_{k \rightarrow \infty} \int f \, d\mu_k
	\]
	exists and 
	\[
	L(f) = \lim_{i \rightarrow \infty} L(f_i). 
	\]
\end{lemma}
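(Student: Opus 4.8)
The plan is to sandwich $\int f\,d\mu_k$ between expressions controlled by the numbers $L(f_i)$, using only that the Choquet integral is monotone in the integrand and quasi-subadditive (Lemma~\ref{l:quasiadd}), never additivity. First, since the sequence $\{f_i\}$ is decreasing, monotonicity of the Choquet integral gives $\int f_i\,d\mu_k\ge\int f_j\,d\mu_k$ whenever $i\le j$, and letting $k\to\infty$ (the limits exist because $f_i,f_j\in D$) yields $L(f_i)\ge L(f_j)$. As each $L(f_i)\ge0$, the sequence $\{L(f_i)\}$ decreases to some $\ell\ge0$, and the goal becomes showing that $\lim_k\int f\,d\mu_k$ exists and equals $\ell$.

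For the upper bound I would just use $f\le f_i$ pointwise: by monotonicity $\int f\,d\mu_k\le\int f_i\,d\mu_k$, so $\limsup_k\int f\,d\mu_k\le L(f_i)$ for every $i$, and hence $\limsup_k\int f\,d\mu_k\le\ell$.

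The lower bound is the main point, and is where the lack of subadditivity has to be handled. Put $\varepsilon_i:=\|f_i-f\|_\infty$, so $\varepsilon_i\to0$. Since $\supp f_i\subseteq B(0,R+1)$, one has the pointwise bound $f_i\le f+\varepsilon_i\mathds{1}_{B(0,R+1)}$ (trivial off $B(0,R+1)$, and from the $L^\infty$ estimate on it). Applying monotonicity of the Choquet integral and then Lemma~\ref{l:quasiadd} with a parameter $\gamma\in(0,1)$, together with the elementary identity $\int\varepsilon_i\mathds{1}_{B(0,R+1)}\,d\mu_k=\varepsilon_i\mu_k(B(0,R+1))$, gives
\[
\int f_i\,d\mu_k\le\frac1\gamma\int f\,d\mu_k+\frac{\varepsilon_i}{1-\gamma}\,\mu_k(B(0,R+1)),
\]
that is,
\[
\int f\,d\mu_k\ge\gamma\int f_i\,d\mu_k-\frac{\gamma\varepsilon_i}{1-\gamma}\,\mu_k(B(0,R+1)).
\]
Since $\mu_k\le\tilde\mu_k$ and $\tilde\mu_k\rightharpoonup\tilde\mu$, Lemma~\ref{l:semicont} gives $\limsup_k\mu_k(B(0,R+1))\le\tilde\mu(B(0,R+1))=:C_R<\infty$. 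Taking $\liminf_{k\to\infty}$ in the last display yields $\liminf_k\int f\,d\mu_k\ge\gamma L(f_i)-\tfrac{\gamma\varepsilon_i}{1-\gamma}C_R$; letting $i\to\infty$ gives $\liminf_k\int f\,d\mu_k\ge\gamma\ell$; and letting $\gamma\to1^-$ gives $\liminf_k\int f\,d\mu_k\ge\ell$.

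Combining the two bounds, $\lim_k\int f\,d\mu_k$ exists and equals $\ell=\lim_i L(f_i)$, as claimed. The only delicate step is the lower bound, where the factor $1/\gamma$ forced by quasi-subadditivity must be absorbed via the limit $\gamma\to1$; this is harmless because the accompanying error carries the factor $\varepsilon_i\to0$ while $\mu_k(B(0,R+1))$ stays uniformly bounded in $k$.
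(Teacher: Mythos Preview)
Your proof is correct and follows essentially the same strategy as the paper: the upper bound via monotonicity $f\le f_i$, the lower bound via quasi-subadditivity applied to $f_i\le f+\varepsilon_i\mathds{1}_{B(0,R+1)}$, a uniform bound on $\mu_k(B(0,R+1))$, and then sending $i\to\infty$ followed by the quasi-subadditivity parameter to its limit. The only cosmetic differences are that the paper swaps the roles of $\gamma$ and $1-\gamma$ (so it sends $\gamma\to 0$ rather than $\gamma\to 1$, picking up an extra harmless $\|f\|_\infty$ term), and bounds $\mu_k(B(0,R+1))$ directly by $C_0(R+1)^d$ from Ahlfors regularity instead of routing through $\tilde\mu$ and Lemma~\ref{l:semicont}; your route is slightly cleaner but the arguments are the same in substance.
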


\begin{proof}
	Let $k \in \bN.$ For each $i,$ since $\mu_k$ is upper $C_0$-Ahlfors $d$-regular and $f_i \geq f,$ and for any $\gamma\in (0,1)$, we have 
	\begin{align*}
		\left| \int f \, d\mu_k - \int f_i \, d\mu_k \right| 
		&= \int f_i \, d\mu_k - \int f \, d\mu_k \\
		&=  \int (f_i -f +f ) \, d\mu_k - \int f \, d\mu_k \\
		& \stackrel{\eqref{e:subadd}}{\leq}
		\frac{1}{\gamma}\int (f_i -f  ) \, d\mu_k +\frac{\gamma}{1-\gamma} \int f \, d\mu_k \\
		&\leq \ps{\frac{1}{\gamma}|| f_i - f ||_\infty +\frac{\gamma}{1-\gamma} ||f||_{\infty}}\mu_{k}(B(0,R+1))\\
		&\leq C_0(R+1)^d\ps{\frac{1}{\gamma}|| f_i - f ||_\infty +\frac{\gamma}{1-\gamma} ||f||_{\infty}}
	\end{align*}
	Taking $k \rightarrow \infty,$ we find 
	\begin{multline*}
		L(f_i) - C_0(R+1)^d\ps{\frac{1}{\gamma}|| f_i - f ||_\infty +\frac{\gamma}{1-\gamma} ||f||_{\infty}}  \leq \liminf_{k \rightarrow \infty} \int f \, d\mu_k \\
		\leq \limsup_{k \rightarrow \infty} \int f \, d\mu_k 
		\leq \limsup_{k \rightarrow \infty} \int f_i \, d\mu_k 
		\leq L(f_i).
	\end{multline*}
	Since $L(f_i)$ is a monotone decreasing sequence of non-negative real numbers, $\lim_{i \rightarrow \infty} L(f_i)$ exists. Hence, taking $i \rightarrow \infty$, it follows that 
	\begin{multline*}
		\lim_{i\rightarrow \infty} L(f_i) - C_0(R+1)^d\frac{\gamma}{1-\gamma} ||f||_{\infty} \leq \liminf_{k \rightarrow \infty} \int f \, d\mu_k
		\leq \limsup_{k \rightarrow \infty} \int f \, d\mu_k \\
		\leq  \limsup_{k \rightarrow \infty} \int f_i \, d\mu_k 
		\leq \lim_{i\rightarrow \infty} L(f_i).
	\end{multline*}
	Since $\gamma\in (0,1)$ is arbitrary, this implies the limit $L(f)$ exists and equals the desired quantity. 
\end{proof}

\begin{lemma}\label{l:linear}
	The functional $L$ is linear on $C_0^+(\R^n).$ 
\end{lemma}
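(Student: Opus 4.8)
The plan is to combine the additivity of $L$ on the non-negative members of $D$ with the continuity statement of Lemma~\ref{l:continuity}, and then upgrade to positive homogeneity using the monotonicity of the Choquet integral.

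\textbf{Additivity.} Fix $f,g \in C_0^+(\R^n)$ and $R>0$ with $\supp f, \supp g \subseteq B(0,R)$. Using the density of $D$ in $C_0^+(\R^n)$ together with a fixed smooth bump function equal to $1$ on $B(0,R)$ and supported in $B(0,R+1)$, I would produce decreasing sequences $\{f_i\},\{g_i\}\subseteq D$ with $f_i\geq f$, $g_i\geq g$, $\supp f_i,\supp g_i\subseteq B(0,R+1)$, and $f_i\to f$, $g_i\to g$ in $L^\infty$. Since $D$ is closed under (rational, hence in particular integer) linear combinations, the sequence $\{f_i+g_i\}\subseteq D$ is decreasing, dominates $f+g$, is supported in $B(0,R+1)$, and converges uniformly to $f+g$. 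Applying Lemma~\ref{l:continuity} to each of $f$, $g$, and $f+g$ (with the sequences $f_i$, $g_i$, and $f_i+g_i$ respectively), and using the additivity of $L$ on non-negative elements of $D$ established above, we obtain
\[
L(f+g)=\lim_{i\to\infty}L(f_i+g_i)=\lim_{i\to\infty}\left(L(f_i)+L(g_i)\right)=L(f)+L(g).
\]

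\textbf{Positive homogeneity.} The Choquet integral $\int\cdot\,d\mu_k$ is monotone, since $0\leq f\leq g$ forces $\{f>t\}\subseteq\{g>t\}$ and each $\mu_k$ is monotone; hence $L$ is monotone on $C_0^+(\R^n)$. From the additivity just proved, $L(nf)=nL(f)$ for $n\in\bN$, and writing $L(f)=L\!\left(n\cdot\tfrac1n f\right)=nL\!\left(\tfrac1n f\right)$ gives $L\!\left(\tfrac{m}{n}f\right)=\tfrac{m}{n}L(f)$ for every non-negative rational $\tfrac mn$. For irrational $a\geq 0$, pick rationals $q\uparrow a$ and $q'\downarrow a$; monotonicity of $L$ gives $q\,L(f)=L(qf)\leq L(af)\leq L(q'f)=q'\,L(f)$, and letting $q,q'\to a$ yields $L(af)=aL(f)$. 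Combining the two parts, $L(af+bg)=aL(f)+bL(g)$ for all $a,b\geq 0$ and $f,g\in C_0^+(\R^n)$, which is the claimed linearity.

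\textbf{Main obstacle.} The only delicate point is the first step: arranging the uniform approximants to be \emph{decreasing}, to \emph{dominate} the target, and to have \emph{uniformly bounded supports}, all while remaining in $D$ — this is precisely what makes Lemma~\ref{l:continuity} applicable, and it is handled by the bump-function truncation. Once this is in place, the argument is bookkeeping built on the already-established additivity on $D$ and the monotonicity of the Choquet integral.
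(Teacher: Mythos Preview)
Your additivity argument is essentially the paper's: approximate $f$ and $g$ from above by decreasing sequences in $D$ built from bump-function truncations, note that $f_i+g_i\in D$ decreases to $f+g$, and invoke Lemma~\ref{l:continuity} together with the additivity of $L$ on non-negative elements of $D$. The paper carries out the construction of the decreasing dominating sequences explicitly (adding a geometrically shrinking multiple of the bump to force monotonicity), which is exactly the ``main obstacle'' you identify.

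One remark on the homogeneity half: your argument via rational additivity and monotone squeezing is correct, but it is more work than necessary. Positive homogeneity of the Choquet integral is immediate from the definition --- for $a>0$, the substitution $t\mapsto at$ gives $\int af\,d\mu_k = \int_0^\infty \mu_k(\{af>t\})\,dt = a\int_0^\infty \mu_k(\{f>s\})\,ds = a\int f\,d\mu_k$, so $L(af)=aL(f)$ follows directly by passing to the limit. The paper does not spell this out, presumably for this reason.
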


\begin{proof}
	Let $f,g \in C_0^+(\R^n).$ Since $D'$ is dense in $C_0^+(\R^n),$ we can find sequence of function $\{\tilde{f}_i\}$ and $\{\tilde{g}_i\}$ in $D'$ such that 
	\begin{align}\label{e:L-infty}
		|| f - \tilde{f}_i||_\infty \leq 3^{-i} \quad \text{and} \quad || g - \tilde{g}_i||_\infty \leq 3^{-i}.
	\end{align}
	Let $R_f$ and $R_g$ positive integers such that $\supp f \subseteq B(0,R_f)$ and $\supp g \subseteq B(0,R_g).$ Recall that functions in $D'$ are of the form $P\phi_j$ for some polynomial $P$ with rational coefficients and a bump function $\phi_j$ equal 1 on $B(0,j)$ with support in $B(0,j+1)$. Thus, without loss of generality we can assume $\tilde{f}_i = P_i^f \phi_{R_f}$ and $\tilde{g}_i = P_i^g \phi_{R_g}$ where $P_i^f,P_i^g$ are non-negative polynomials with rational coefficients such that
	\[||f  - P_i^f||_{L^\infty(B_{R_f})} \leq 3^{-i} \quad \text{and} \quad ||g - P_i^g||_{L^\infty(B_{R_g})} \leq 3^{-i}. \]
	
	We plan to modify the $\tilde{f}_i$ and $\tilde{g}_i$ so that they monotonically decrease to $f$ and $g$ respectively. For each $i,$ define 
	\[ f_i = \tilde{f}_i + 2 \cdot 3^{-i} \phi_{R_f} \quad \text{and} \quad g_i = \tilde{g}_i + 2 \cdot 3^{-i}\phi_{R_g}. \]
	We first consider the $f_i.$ We still have that $f_i \rightarrow f$ and $\supp f_i \subseteq B(0,R_f +1)$. Outside of $B(0,R_f)$, $f = 0 \leq f_i .$ For $x \in B(0,R_f)$ and $i \in \bN,$ 
	\[  f(x) + 3^{-i} \phi_{R_f}(x) = f(x) + 3^{-i} \stackrel{\eqref{e:L-infty}}{\leq}  \tilde{f}_i(x) +2\cdot 3^{-i}\phi_{R_f}(x)  = f_i(x)  \]
	and
	\begin{align*}
		f_i(x) &= \tilde{f}_i(x) + 2\cdot 3^{-i} \phi_{R_f}(x)\leq (P_i^f\phi_{R_f})(x)  + 2\cdot 3^{-i} \phi_{R_f}(x) \\
		&\leq (f(x) + 3^{-i})\phi_{R_f}(x) + 2\cdot 3^{-i}\phi_{R_f}(x) = f(x) + 3^{-i +1}\phi_{R_f}(x).
	\end{align*}
	It follows that $f \leq f_i$ for each $i$ and 
	\[ f_{i+1} \leq f + 3^{-i}\phi_{R_f} \leq f_i .\]
	In summary the sequence $\{f_i\}$ satisfies the hypothesis of Lemma \ref{l:continuity} for $f$. The same is true of the sequence $\{g_i \}$ for $g$. It is not difficult to show that 
	\[ h_i = f_i + g_i \]
	satisfies the conditions of Lemma \ref{l:continuity} for $f + g.$ Then, since $L$ is linear on $D$, we have 
	\[
	L(f + g) = \lim_{i \rightarrow \infty} L ( h_i) = \lim_{i \rightarrow \infty} L(f_i) + \lim_{i \rightarrow \infty} L(g_i) = L(f) + L(g)
	\]
	which completes the proof.
\end{proof}

\begin{proof}[Proof of Lemma \ref{l:subseq}] Let $\{\mu_{k_j}\}$ be the subsequence defining $L$. By Lemma \ref{l:linear}, $L$ defines a linear functional on $C_0^+(\R^n).$ We claim $L$ defines a linear functional on $C_0(\R^n).$ By definition, for any $f \in C_0(\R^n)$ and $k \in \bN,$ we have 
	\[ \int f \, d\mu_k = \int f^+ \, d\mu_k - \int f^{-} \, d\mu_k. \]
	Hence, the limit 
	\[ L(f) = \lim_{j \rightarrow \infty} \int f \, d\mu_{k_j} \] 
	exists and
	\[
	L(f) = L(f^+) - L(f^{-})
	\]
	Suppose $\phi,\vp \in C_0(\R^n)$. Then $\phi^+,\phi^{-},\vp^{+}$ and $\vp^-$ are in $C_0^+(\R^n).$  Observe that we can write
	\[
	(\phi+\vp)^+ - (\phi+\vp)^- = \phi + \vp = (\phi^+ + \vp^+) - (\phi^- + \vp^-),
	\]
	and after rearranging 
	\[ (\phi+\vp)^+ + (\phi^- + \vp^-) =  (\phi^+ + \vp^+) +   (\phi+\vp)^-. \]
	Taking $L$ on both sides and using linearity of $L$ on $C_0^+(\R^n)$, we have 
	\[ L((\phi+\vp)^+ ) + L( \phi^- + \vp^- ) = L(\phi^+ + \vp^+) + L((\phi+\vp)^-).\]
	Rearranging once more gives
	\begin{align}\label{e:1}
		L((\phi+\vp)^+ ) - L((\phi+\vp)^-) =  L(\phi^+ + \vp^+)  - L( \phi^- + \vp^- ).
	\end{align}
	Using \eqref{e:1}, linearity on $C_0(\R^n)$ follows since
	\begin{equation*}
		\begin{aligned}
			L(\phi+\vp) &= L((\phi+\vp)^+) - L((\phi+\vp)^-) \\
			&\stackrel{\eqref{e:1}}{=} L(\phi^+ + \vp^+) - L(\phi^- + \vp^-) \\
			&= L(\phi^+) - L(\phi^-) + L(\vp^+) - L(\vp^-) \\
			&=L(\phi) + L(\vp). 
		\end{aligned}
	\end{equation*}
	Now, since $L$ is linear on $C_0(\R^n),$ by the Riesz Representation Theorem we can find a Radon measure $\mu$ such that 
	\[
	\int \phi \, d\mu = \lim_{j \rightarrow \infty} \int \phi \, d\mu_{k_j}
	\]
	for all $\phi \in C_0(\R^n)$ as required. 
\end{proof}

\def\cprime{$'$}

\end{document}